\documentclass[a4paper,11pt,leqno]{article}

\usepackage{amsmath, amsfonts} 
\usepackage{mathrsfs} 
\usepackage{amssymb, stmaryrd} 

\usepackage[dvipsnames]{xcolor}
\usepackage[modulo]{lineno}
\usepackage{todonotes}

\usepackage{graphicx}
\usepackage[all]{xy}

\usepackage{amsthm} 
\usepackage{microtype} 
\usepackage{fullpage} 
\usepackage{enumitem} 
\usepackage{enotez} 
\usepackage{authblk} 
\usepackage{setspace} 

\setenotez{backref=true}
\usepackage[backend=biber, style=alphabetic, sorting=nyt]{biblatex}
\usepackage[colorlinks=true,linkcolor=blue,citecolor=blue,urlcolor=blue,citebordercolor={0 0 1},urlbordercolor={0 0 1},linkbordercolor={0 0 1}]{hyperref} 
\usepackage{cleveref}

\usepackage{tikz-cd}

\theoremstyle{plain}
\newtheorem{thm}{Theorem}[section]
\newtheorem{lem}[thm]{Lemma}

\newtheorem{prop}[thm]{Proposition}

\theoremstyle{definition}
\newtheorem{rem}[thm]{Remark}
\newtheorem{defn}[thm]{Definition}

\NewDocumentCommand{\dan}{om}{%
  \IfNoValueTF{#1}
    {\todo[author=Dan,color=SkyBlue, bordercolor=blue, linecolor=blue]{#2}} 
    {\todo[author=Dan,color=SkyBlue, bordercolor=blue, linecolor=blue, #1]{#2}} 
}

\makeatletter 
\def\makeCal#1{%
\expandafter\newcommand\csname c#1\endcsname{\mathcal{#1}}}

\def\makeBB#1{%
\expandafter\newcommand\csname b#1\endcsname{\mathbb{#1}}}

\def\makeFrak#1{%
\expandafter\newcommand\csname f#1\endcsname{\mathfrak{#1}}}

\def\makeScr#1{%
\expandafter\newcommand\csname s#1\endcsname{\mathscr{#1}}}

\count@=0
\loop
\advance\count@ 1
\edef\y{\@Alph\count@} 
\expandafter\makeCal\y
\expandafter\makeBB\y
\expandafter\makeFrak\y
\expandafter\makeScr\y
\ifnum\count@<26
\repeat

\def\makelowercaseFrak#1{%
\expandafter\newcommand\csname mf#1\endcsname{\mathfrak{#1}}}
\count@=0
\loop
\advance\count@ 1
\edef\y{\@alph\count@} 
\expandafter\makelowercaseFrak\y
\ifnum\count@<26
\repeat

\newcommand{\category}{\mathsf} 

\DeclareMathOperator{\Set}{\category{Set}}
\DeclareMathOperator{\Sch}{\category{Sch}}
\DeclareMathOperator{\Mod}{\category{Mod}}
\DeclareMathOperator{\QCoh}{\category{QCoh}}
\DeclareMathOperator{\Ab}{\category{Ab}}

\DeclareMathOperator{\Hom}{Hom}
\DeclareMathOperator{\Aut}{Aut}
\DeclareMathOperator{\Spec}{Spec}
\DeclareMathOperator{\coker}{\mathrm{coker}} 
\DeclareMathOperator{\Ext}{\mathrm{Ext}} 

\DeclareMathOperator{\crimp}{Crimp}

\DeclareMathOperator{\supp}{Supp}

\DeclareMathOperator{\cover}{\cH}
\DeclareMathOperator{\Hilb}{Hilb}
\DeclareMathOperator{\Cond}{Cond}
\DeclareMathOperator{\Ann}{Ann}
\DeclareMathOperator{\Fitt}{Fitt}
\DeclareMathOperator{\Tor}{Tor}
\DeclareMathOperator{\SL}{SL}
\DeclareMathOperator{\Gr}{Gr}
\DeclareMathOperator{\exal}{Exal}
\DeclareMathOperator{\len}{len}
\DeclareMathOperator{\Grad}{Grad}
\DeclareMathOperator{\wt}{wt}
\DeclareMathOperator{\ev}{ev}
\DeclareMathOperator{\diag}{diag}
\DeclareMathOperator{\Filt}{Filt}

\title{Proper moduli spaces of isolated non-normal singularities}
\author{Jiucheng Dai}
\author{Daniel Halpern-Leistner}
\author{Mingjun Sun}
\author{Ni Tang}
\author{John Veliz}
\affil{Cornell University}
\date{\today}

\begin{document}

\maketitle

\begin{abstract}
    We study the moduli of isolated non-normal singularities by introducing the functor $\crimp_S^d(X)$ of crimpings of $(X \to S)$ corank $d$. This globalizes Ishii's moduli functor of subrings of finite colength. By verifying Artin's criteria, we prove that if $X \to S$ is a separated morphism of finite presentation, then $\crimp_S^d(X)$ is represented by a separated algebraic space of finite presentation over $S$, which is proper when $X \to S$ is proper. We use this to construct a semistability condition and moduli space for geometrically unibranch curves of genus zero.
\end{abstract}

\tableofcontents

\section{Introduction}

A remarkable amount of algebraic geometry and moduli theory amounts to studying the geometry of a small class of moduli spaces, which one can reformulate in terms of successively simpler moduli problems until they are ultimately described as spaces of lines in a finite dimensional vector space satisfying certain closed conditions, i.e., a closed subscheme of $\bP^n$. For instance, Hilbert schemes are special cases of Quot schemes, which can be explicitly embedded in Grassmannians $\Gr_d(H^0(X,\cO_X(k)^N))$ and ultimately in $\bP(\bigwedge^d H^0(X,\cO_X(k)^N))$. This paper constructs a new class of moduli spaces for objects of fundamental geometric interest, which apparently can not be reduced to this well-studied hierarchy.

Most moduli spaces parameterize objects ``over" a fixed space $X$, such as the Hilbert scheme $\Hilb(X)$, the Quot scheme $\rm{Quot}(X)$, the spaces of Hulls and Husks over $X$ \cite{kollár2009hullshusks}, and the space of Branchvarieties \cite{MR2608190}. In contrast, our new space, the space of \emph{crimpings}, parameterizes schemes ``under" $X$. 
A crimping of $X$ is, roughly, a finite morphism $f: X\to Y$
such that $\cO_Y \to f_* \cO_X$ is injective and $Q:= f_\ast(\cO_X)/\cO_Y$ has finite support. 
In particular, the morphism $f$ is an isomorphism away from the support of $Q$. We call the length of $Q$ the \emph{corank} of the crimping. In \Cref{D:moduli_functor}, we introduce a moduli functor $\crimp_S^d(X)$ parametrizing flat families of crimpings of $X$ of corank $d$, and our main theorem, \Cref{T:main_algebraicity}, states that this functor is represented by an algebraic space that is proper if $X$ is proper.

If $X$ is normal, then any crimping $f : X \to Y$ identifies $X$ with the normalization of $Y$. Therefore, we can regard $\crimp^d(X)$ as the moduli space of isolated non-normal singularities whose normalization is parameterized by $X$. This moduli problem of (parameterized) isolated non-normal singularities has been thoroughly studied \cite{MR3746616}, but to our knowledge this is the first time it has been observed to be representable.


When $X =\Spec A$ for a local ring $A$, the crimping functor becomes a moduli problem for subalgebras of finite colength. This perspective is natural for curve singularities, where the normalization is locally modeled by $k[[t]]$. 
Ishii first defined  the functor \cite[Definition 1,2]{MR602077}  for $X = \Spec A$ over any field $k$. She proved that, when $A$ is an Artinian local $k$-algebra, the functor is represented by a projective scheme called the territory of $A$, but representability by schemes fails if $A$ is not Artinian \cite[Theorem 1, Remark (ii)] {MR602077}.

Our project stems from the observation that the failure of representability comes from the presence of certain directions in the deformation space of the singularity that want to break it into multiple simpler singular points. These deformations can not be realized by the moduli functor for the local ring $A$, which forces the singularity to be supported at the unique closed point. Ishii addressed this issue in \cite{MR680357} by including the points on $X$ where singularities occur as part of the data, leading to a class of projective varieties that she calls relative version of territories.  We study a similar projective variety $\cover_{n,d}(X)$ below. However, our main insight is that this additional data is not necessary -- one can directly verify Artin's criteria for the algebraicity of $\crimp^d_S(X)$, as reformulated in \cite{MR3589351}.




Beyond the construction of this fundamental moduli space, our main application is to construct a proper moduli space of semistable singular curves of geometric genus $0$ and $\delta$-invariant $d$ in \Cref{SS: stability}. Unlike singular curves of geometric genus $>0$, genus $0$ singular curves can have positive dimensional automorphism groups, making this the most interesting case from a moduli-theoretic perspective. We study the stack of ``geometrically unibranch genus $0$ curves," which is a closed substack of $\crimp^d(\bP^1) / \SL_2$. This stack almost looks like a geometric invariant theory problem, except that we do not have an ample line bundle on $\crimp^d(\bP^1)$. Nevertheless, we manage to use the ``beyond GIT" framework to identify a family of numerical invariants on this stack that define a $\Theta$-stratification whose semistable locus admits a proper good moduli space (\Cref{T:genus_0}).


\subsection*{Author's note}

This paper grew out of a reading course in Fall 2025, lead by D.H.L., in which each of the authors sketched a proof of one of Artin's criteria for the stack $\crimp^d(X)$. In Spring and Summer 2026, N.T. generalized this initial work and developed it into the current paper. D.H.L. was supported by the NSF CAREER grant DMS-1945478. The use of generative AI in the this paper was limited to language editing. The paper was written by the authors, who  take full responsibility for its correctness.

\section{The moduli space of crimpings} \label{S: crimpings}

\begin{defn}\label{D:moduli_functor}
    A \emph{family of crimpings} over $T$ of corank $d$ consists of
    \begin{enumerate}
        \item finitely presented morphisms $X \to T$ and $Y \to T$,
        \item a finite morphism $f : X \to Y$ over $T$,
        \item such that the canonical morphism $\cO_Y \to f_\ast(\cO_{X })$ is injective, and the quotient sheaf $Q:=f_\ast(\cO_{X})/\cO_Y$ on $Y$ is $T$-flat and has finite support over $T$.
    \end{enumerate}
    We say that a family of crimpings is \emph{flat} if $X \to T$ is flat.
\end{defn}

The flatness of $Q$ over the base implies that, locally over $Y$, $\Tor_i(\cO_Y,M) \cong \Tor_i(f_\ast(\cO_X),M)$ for $i>0$ and any $\cO_T$-module $M$. Therefore, the flatness of $X \to T$ is equivalent to the flatness of $Y \to T$.

Given a family of crimpings over $T$ and a morphism $T' \to T$, the base change $f' : X_{T'} \to Y_{T'}$ is a family of crimpings over $T'$ -- the flatness of $f_\ast(\cO_X)/\cO_Y$ over $T$ is used to ensure that $\cO_{Y_{T'}} \to f'_\ast(\cO_{X_{T'}})$ is injective. We can therefore associate the following moduli functor to a morphism of finite presentation $X \to S$:
\begin{defn}[Moduli of crimpings]
    Let $\crimp^d_S(X) : \Sch_{/S}^{\rm{op}} \to \Set$ be the functor that assigns to an $S$-scheme $T \to S$ the set of flat families of crimpings $X \times_S T \to Y$ of corank $d$ over $T$.  We refer to these as flat families of crimpings of $X$ over $T$.
\end{defn}

\begin{thm}\label{T:main_algebraicity}
    If $S$ is a quasi-compact algebraic space and $X \to S$ is a separated morphism that is flat and of finite presentation, then $\crimp^d_S(X)$ is representable by an algebraic space that is separated and of finite presentation over $S$, which we call the space of ``crimpings'' for $X$. If in addition $X \to S$ is proper, then so is $\crimp_S^d(X) \to S$.
\end{thm}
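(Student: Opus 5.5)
We plan to verify Artin's criteria in the form of \cite{MR3589351} (Hall--Rydh style: a limit-preserving étale sheaf with effective and homogeneous deformation theory, a bounded obstruction theory, and effectivity of formal objects is an algebraic space). We will reduce to $S$ affine and noetherian by standard absolute approximation. Then $\crimp^d_S(X)$ is a sheaf, and it is limit preserving because all of the data are finitely presented. For the sheaf property we use fpqc descent. Since $f$ is finite, a crimping is determined by the $\cO_T$-subalgebra $\cO_Y\subset f_\ast\cO_{X_T}$. More precisely, $Y=\mathbf{Spec}_{Y}$ of this subalgebra, and $Y$ is obtained from $X_T$ as a pushout that glues along the finite support of $Q$. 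So we will work throughout with the equivalent data of a quasi-coherent subalgebra $\cA\subset \pi_\ast$ of finite colength. It is cleanest to phrase this via the support $Z\subset X_T$ of $\cO_{X_T}/\mathrm{Ann}(Q)$, which is finite over $T$, together with a subalgebra of $\cO_{Z'}$ for a suitable infinitesimal thickening $Z'$.

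The key structural step is to show that, étale locally on $T$, a family of corank $d$ is determined by finite data. The ideal $\mathfrak c=\Ann_{\cO_Y}(Q)$, a conductor-type ideal, contains $\mathfrak m^{d}$ fiberwise at the non-isomorphism locus. Hence $\cO_Y\supset I$ for the ideal $I=\mathfrak c\cdot f_\ast\cO_X$, and $I$ cuts out a closed subscheme $W\subset X_T$ finite flat over $T$ of degree bounded in terms of $d$ (say $\le n(d)$). The crimping is then equivalent to a pair $(W,\ B)$ with $B\subset \cO_W$ a $T$-flat subalgebra of corank $d$ containing $I/I$. Fiberwise, this gives a quasi-finite, separated morphism from $\crimp^d_S(X)$ to a relative territory-type space $\cover_{n,d}(X)$. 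The latter is a closed subspace of a Grassmannian bundle over $\Hilb^n_S(X)$ consisting of pairs (length $n$ subscheme, codimension $d$ subalgebra). This bundle is algebraic and finitely presented, and it is proper when $X\to S$ is proper, since $\Hilb^n$ is. From this we get boundedness (quasi-compactness) and a quasi-finite cover. As a backup route to algebraicity, we check Artin's criteria directly. The deformation theory comes from deformations of the subalgebra inside the fixed flat algebra $\pi_\ast\cO_{X_T}$ restricted to $W$. Tangent and obstruction spaces are $\Hom_{\cO_Y}(\cO_Y, Q\otimes M)$-type groups, namely derivation-compatible homs from $\mathcal{c}$-data. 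These are finite since $Q$ is finite over $T$. Homogeneity (the Schlessinger condition) follows because fibered products of subalgebras are computed inside the fixed algebra. Effectivity holds because over a complete local base $\widehat{T}$ the subalgebras of $\cO_{W_n}$ are finite modules, and their inverse limit gives a subalgebra of $\cO_{\widehat W}$ by Grothendieck existence for the finite scheme $W$.

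Separatedness comes from the valuative criterion. Over a DVR $R$ with fraction field $K$, suppose two flat families agree on the generic fiber. Then $\cO_Y$ is recovered as $\cO_{Y_K}\cap \pi_\ast\cO_{X_R}$, because flatness of $Q$ forces $\cO_Y$ to be saturated in $\pi_\ast\cO_{X_R}$. So the two families are equal. Existence gives properness. Given $\cO_{Y_K}\subset\pi_\ast\cO_{X_K}$, we define $\cO_Y:=\cO_{Y_K}\cap \pi_\ast\cO_{X_R}$. This is a subalgebra, and the quotient $Q$ injects into $Q_K$, so it is torsion free, hence $R$-flat. Its support is the closure of a finite-over-$K$ set. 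Because $X\to S$ is proper, this closure is finite over $R$, and the rank is still $d$. The remaining check is that $Y=\mathbf{Spec}\,\cO_Y$ is finitely presented and that $X\to Y$ is finite. We handle this locally near the support of $Q$, using the description through $W$ above, where $W$ is the closure of $W_K$ in the Hilbert scheme.

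The main obstacle is the boundedness step. We must show that the conductor-type ideal contains a power of the ideal of the support with exponent uniform in $d$. We also need this over arbitrary (non-reduced) bases, so that the map to $\cover_{n,d}(X)$ is well defined and representable. We will try an argument via Nakayama/Fitting ideals: $\Fitt_0(Q)$ annihilates $Q$, and $Q$ has length $d$ fiberwise. We expect the $\cO_Y$-module $\mathfrak m_Y$ to act nilpotently on $Q$ with index $\le d$, which gives the bound $n(d)\le d\cdot$(local degree).
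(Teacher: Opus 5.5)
Your overall skeleton (reduce to a good base, then verify Artin's criteria in Hall's form) matches the paper. The gap is in your ``key structural step''. You need it for quasi-compactness, which in turn is the input to your valuative argument for properness, and it rests on a morphism $\crimp^d_S(X)\to\cover_{n,d}(X)$ that does not exist. Any subscheme of $X_T$ built from $\Ann(Q)$, from the support of $Q$, or from a power of the ideal of that support is finite over $T$. But it need be neither $T$-flat nor compatible with base change, so it gives no $T$-point of $\cover_{n,d}(X)$, and no uniform exponent repairs this. For example, $B_{(b:c)}=k+k\,(bt^2+ct^3)+t^4k[t]\subset k[t]$, $(b:c)\in\bP^1_k$, is a flat $\bP^1_k$-family of corank-$2$ crimpings of $\bA^1_k$. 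Its conductor is $t^4k[t]$ (colength $4$) for $b\neq 0$, but $t^3k[t]$ (colength $3$) at $b=0$. Likewise the reduced support drops in length when two singular points collide; the paper explicitly notes that $\Cond(f)$ need not be flat. A crimping is also not equivalent to a pair $(W,B)$: every finite closed subscheme containing the conductor gives the same crimping by pushout. The same defect breaks your effectivity sketch, because conductor-based $W_n$ over $C/\mathfrak m^n$ do not form a compatible system.

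The missing idea is to reverse the arrow. The pushout $(Z\hookrightarrow X_T,\ Z\to W)\mapsto (X_T\to X_T\cup_Z W)$ defines a morphism $\cover_{n,d}(X)\to\crimp^d_S(X)$ over every base. Flatness of $\cO_Y$ follows from flatness of $X$ and of $Q$, and finite presentation from Artin--Tate plus approximation. You then only need surjectivity on field-valued points. There flatness is automatic and the bound is elementary: $\cO_Y/\Ann(Q)\hookrightarrow \mathrm{End}_k(Q)$, so $\dim_k \cO_{X_k}/\Ann(Q)\le d^2+d$. Since the $\cover_{n,d}(X)$ with $n\le d(d+1)$ are of finite presentation over $S$, and proper when $X\to S$ is, this gives quasi-compactness and, with separatedness, properness at once. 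For effectivity and homogeneity, use the subspaces cut out by $\Fitt_0(Q)$ and their preimages in $X$; these commute with base change.

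Two smaller corrections.
\begin{itemize}
\item The tangent and obstruction groups are $\Ext^i_{\cO_Y}(L_{Y/T},J\otimes_T Q)$ for $i=0,1$. The tangent space is $\Hom_{\cO_Y}(\Omega_{Y/T},J\otimes_T Q)$, not $\Hom_{\cO_Y}(\cO_Y,Q\otimes M)$. Artin's criteria need these to be \emph{coherent functors} of $J$, which follows from Hall's coherence theorem because $Q$ is $T$-flat with support finite over $T$; finite generation alone is not enough.
\item The valuative criterion for separatedness presupposes quasi-separatedness. The paper avoids this by showing directly that the diagonal is a closed immersion: two crimpings agree exactly where two maps of $\cO_T$-modules into the finite locally free $Q_1,Q_2$ vanish.
\end{itemize}
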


\begin{rem}
    The hypothesis of \Cref{T:main_algebraicity} can be relaxed to the following: $X \to S$ is separated and of finite presentation and admits a universal flattening. \footnote{A universal flattening $S'_X \to S$, if exists, is always quasi-compact. If $S$ is qcqs, choose by Nagata's compactification \cite[Theorem 1.2.1]{MR2979821} an open immersion $X \to \bar{X}$ with $\bar{X} \to S$ proper of finite presentation. This gives a universal flattening $S'_{\bar{X}} \to S$ of finite presentation, and hence $S'_{\bar{X}}$ is qc. Because the natural map $S'_{\bar{X}} \to S'_X$ is surjective, this implies $S'_X$ is qc. In general if $S$ is only qc, choose an étale surjection $U \to S$ with $U$ qcqs. Then $S'_{X_U} \cong S'_X \times_{S} U$ is qc, and since $S'_{X_U} \to S'_X$ is surjective, $S'_X$ is qc. } 
    This weaker condition holds, for instance, when $X \to S$ is proper and of finite presentation \cite[\href{https://stacks.math.columbia.edu/tag/0CX2}{Tag 0CX2}]{stacks-project}. The more general formulation reduces immediately to \Cref{T:main_algebraicity}, because if $S' \to S$ is the universal flattening, then $\crimp^d_{S}(X) \cong \crimp^d_{S'}(X_{S'})$ as functors over $S'$. On the other hand, any family of crimpings $X \to Y$ of corank $0$ is an isomorphism, so $\crimp^0_S(X)(T)$ contains a unique point if $X \times_S T$ is $T$-flat, and is empty otherwise. In other words the statement of \Cref{T:main_algebraicity} in the case $d=0$ is equivalent to the existence of a universal flattening for $X \to S$.
\end{rem}


 We recall that given a family of  crimpings $f : X \to Y$, the annihilator of $Q$ in $\cO_Y$ is also a sheaf of ideals of $f_\ast(\cO_X)$, and can therefore canonically be regarded as a sheaf of ideals $\Ann(Q) \subset \cO_X$ on $X$, called the conductor ideal of the morphism $f$. We call the closed subspace $\Cond(f) \hookrightarrow X$ associated to the ideal $\Ann(Q)$ the \emph{conductor subspace} of $f$. For a family of crimpings, $\Cond(f)$ is finite over $T$, although it need not be flat even when $X_T \to T$ is.

Given a closed subspace $Z \subset X$ and an affine morphism $Z \to Z'$, the pushout $Y = X \cup_{Z} Z'$ exists in the category of algebraic spaces. Furthermore, $Z = X \times_Y Z'$, and the formation of the pushout is compatible with base change along a smooth morphism $T \to Y$ \cite[Theorem 4.2, Lemma 4.3]{MR4699880}.
Any family of crimpings $f : X \to Y$ can be recovered as the pushout
\begin{equation}
\xymatrix{\Cond(f) \ar[r] \ar[d] & \Spec_{Y}(\cO_Y/\Ann(Q)) \ar[d] \\ X \ar[r] & Y}.
\end{equation}
In fact, $\Cond(f) \hookrightarrow X $ is the smallest closed subspace that can be used to construct $f$ in this way. 
\begin{rem}Let $X \to Y$ be a family of crimpings over $T$. 
    Since both $\Cond(f)$ and $\Spec_{Y}(\cO_Y/\Ann(Q))$ are finite over $T$, it follows from the pushout construction and its formation that if $X$ is affine over $T$, then $Y$ is also affine over $T$.
\end{rem}

Crimpings are ``Nisnevich local" in the following sense:

\begin{lem}\label{L:locality_of_crimpings}
    Suppose that $X \to T$ is of finite presentation, and $p : U \to X$ is an \'etale morphism of finite presentation, and $Z \hookrightarrow X$ is a closed subscheme such that $p^{-1}(Z) \to Z$ is an isomorphism. Then there is a bijection between families of crimpings of $X$ with conductor contained in $|Z|$, and families of crimpings of $U$ with conductor contained in $|p^{-1}(Z)|$, given by the following construction: 
    
    Given $f : U \to V$ such that $|\Cond(f)| \subset |p^{-1}(Z)|$, the composition $\Cond(f) \hookrightarrow U \to X$ is also a closed immersion, and we define a crimping $X \to X \cup_{\Cond(f)} \Spec_V(\cO_V/\Ann(Q))$. 
\end{lem}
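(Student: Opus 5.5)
We construct maps in both directions and check that they are mutually inverse. The main tool is the pushout description of a crimping, together with the compatibility of pushouts with étale base change (\cite[Theorem 4.2, Lemma 4.3]{MR4699880}).

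\emph{From $U$ to $X$.} Start with $f : U \to V$ such that $|\Cond(f)| \subset |p^{-1}(Z)|$. Write $C = \Cond(f)$ and $C' = \Spec_V(\cO_V/\Ann(Q))$.
\begin{enumerate}
\item Since $p^{-1}(Z) \to Z$ is an isomorphism, $p$ restricts to an isomorphism over $Z$. Hence $C \hookrightarrow U \to X$ is a closed immersion: it factors through the closed subscheme $p^{-1}(Z)$, possibly after passing to a suitable infinitesimal thickening. This needs care. The scheme $C$ is only set-theoretically inside $p^{-1}(Z)$, so I would replace $Z$ by a nilpotent thickening $Z_n$. I expect $p^{-1}(Z_n) \to Z_n$ to remain an isomorphism, by étaleness: an étale map that is an isomorphism on a reduced closed subscheme is an isomorphism on its thickenings. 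Indeed, $p^{-1}(Z_n) \to Z_n$ is étale, and it is a universal homeomorphism onto its image, since it is bijective with trivial residue field extensions. It is therefore an isomorphism.
\item Form the pushout $Y = X \cup_C C'$. The map $C \to C'$ is finite, since $C'$ is finite over $T$, so $Y$ exists.
\item Verify the crimping axioms. Consider $g : X \to Y$ and $Q_g = g_*\cO_X/\cO_Y$. By the pushout formula, $\cO_Y = \cO_X \times_{\cO_C} \cO_{C'}$, so $Q_g \cong \cO_C/\cO_{C'}$ as sheaves supported on the image of $C$. The same identity holds for $f$, so $Q_g$ agrees with $Q_f$ under the isomorphism of supports. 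This gives injectivity, $T$-flatness, finite support and corank $d$. Finite presentation of $Y$ follows similarly.
\item Note that $\Cond(g) = C$, because the annihilator is computed on the common piece.
\end{enumerate}

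\emph{From $X$ to $U$.} Start with $g : X \to Y$ whose conductor is in $|Z|$, and base change along $U \to X$. I would take the pushout $V = U \cup_{p^{-1}\Cond(g)} \Spec_Y(\cO_Y/\Ann)$. This pushout is available because $p^{-1}(\Cond(g)) \cong \Cond(g)$, using the isomorphism over the thickening established in the previous step. Alternatively, define $V$ as $Y' = Y \times_X$-style étale neighborhood. Concretely, the pushout of the étale map $U \to X$ along $Y$ gives an étale $V \to Y$, by the base-change compatibility in \cite{MR4699880}, and one checks $U = X \times_Y V$. The same quotient computation as in step 3 shows that this is a crimping with conductor $p^{-1}\Cond(g)$.

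\emph{Inverse bijections.} Both composites return the original crimping, because a crimping is recovered as the pushout along its conductor. Applying the constructions in sequence just repeats the same pushout data $(C \to C')$ transported along the isomorphism $p^{-1}(Z_n) \cong Z_n$.

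The main obstacle is step 1: the conductor is only set-theoretically contained in $Z$, and the scheme-theoretic comparison requires the thickening argument. Everything else is formal from the pushout formalism.
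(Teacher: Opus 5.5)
Your proposal is correct and follows essentially the paper's route. The paper likewise identifies closed subschemes of $U$ supported set-theoretically on $p^{-1}(Z)$ with those of $X$ supported on $Z$. It phrases this as $p$ identifying the formal completions along $Z$ and $p^{-1}(Z)$, which is exactly your statement that $p^{-1}(Z_n)\to Z_n$ is an isomorphism. It then concludes from the fact that a crimping is canonically the pushout along its conductor, as you do. One small improvement: your \'etale, universally injective and surjective argument applies verbatim to any closed $Z'\subset X$ with $|Z'|\subset|Z|$, for instance the scheme-theoretic image of $\Cond(f)$. Using that removes the need to find an $n$ with $\Cond(f)\subset p^{-1}(Z_n)$.
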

\begin{proof}
    For any closed subspace $W\subset U$, the composition $W \to X$ is unramified because $U \to X$ is \'etale, and it is injective and closed because $|p^{-1}(Z)| \to |Z|$ is a homeomorphism. Likewise, any closed subscheme of $X$ contained set-theoretically in $|Z|$ lifts uniquely to a closed subscheme of $U$, because $p$ identifies the formal completion of $X$ along $Z$ with the formal completion of $U$ along $p^{-1}(Z)$. The fact that the construction in the lemma gives a bijection between crimpings of $U$ with conductor in $|p^{-1}(Z)|$ and crimpings of $X$ with conductor in $|Z|$ follows from the discussion above the lemma, which explains that any crimping is obtained canonically as the extension of a crimping of its conductor subspace.
\end{proof}

Let $S$ be a quasi-compact algebraic space, and let $X \to S$ be of finite presentation. It will be convenient to reduce proofs of various claims to the case when $T$ is Noetherian, using the following:

\begin{lem}[Limit preservation]\label{lem: limit preservation}
    Suppose that $T = \lim_i T_i$ is an inverse limit of affine $S$-schemes, and $f : X_T \to Y$ is a (flat) family of crimpings over $T$. Then for some $i$, there is a (flat) family of crimpings $X_{T_i} \to Y_i$ over $T_i$ and an isomorphism $Y_i \times_{T_i} T \cong Y$ commuting with the maps from $X_T$ and to $T$.
\end{lem}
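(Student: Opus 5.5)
We will prove this with the standard noetherian approximation results for finitely presented objects, as in \cite[Tag 01ZM]{stacks-project} and its algebraic-space analogues. Write $T=\Spec A$ with $A=\operatorname{colim}_i A_i$ and $T_i=\Spec A_i$.

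\textbf{Step 1: descend the objects.} Since $Y\to T$ is of finite presentation, it descends to some $Y_i'\to T_i$ with $Y_i'\times_{T_i}T\cong Y$. The morphism $X_T\to Y$ is finite and of finite presentation, since both source and target are finitely presented over $T$. Morphisms of finitely presented $T$-spaces descend, so after enlarging $i$ we obtain a morphism $f_i:X_{T_i}\to Y_i'$ over $T_i$ whose base change is $f$. After enlarging $i$ further, $f_i$ is finite, because finiteness descends for finitely presented morphisms.

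\textbf{Step 2: injectivity and the quotient.} Set $\mathcal F_i:=f_{i\ast}\cO_{X_{T_i}}$, a finitely presented $\cO_{Y_i'}$-module since $f_i$ is finite of finite presentation. Its formation commutes with base change, because $f_i$ is affine. Consider the map $\phi_i:\cO_{Y_i'}\to\mathcal F_i$ with cokernel $Q_i$. Then $Q_i$ is a finitely presented module and $Q_i$ pulls back to $Q$, by right exactness of pullback.

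Now apply the limit results for modules to $Q$:
\begin{itemize}
\item $Q$ is $T$-flat, so $Q_i$ becomes $T_i$-flat for large $i$, using \cite[Tag 05LY]{stacks-project} for finitely presented modules that are flat over the base.
\item $\supp Q$ is finite over $T$. The support of $Q_i$ is the closed subscheme cut out by a Fitting ideal, which is finitely presented, so it becomes finite over $T_i$ for large $i$.
\end{itemize}

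For injectivity of $\phi_i$, we cannot simply descend an injectivity statement, so we replace $Y_i'$. Let $Y_i:=\Spec_{Y_i'}(\mathrm{im}\,\phi_i)$; this is the scheme-theoretic image of $X_{T_i}\to Y_i'$. The inclusion $\mathrm{im}\,\phi_i\subset \mathcal F_i$ has cokernel $Q_i$, which is flat over $T_i$. Hence the sequence $0\to\mathrm{im}\,\phi_i\to \mathcal F_i\to Q_i\to0$ stays exact after base change to $T$.

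It follows that $\mathrm{im}\,\phi_i\otimes A$ maps isomorphically onto $\cO_Y$, because $\cO_Y\to f_\ast\cO_{X_T}$ is injective with the same cokernel $Q$. Therefore $Y_i\times_{T_i}T\cong Y$, compatibly with the maps from $X_T$ and to $T$.

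\textbf{Step 3: finite presentation of $Y_i$.} We still need $Y_i\to T_i$ to be of finite presentation. This holds if $\ker\phi_i$ is of finite type. After enlarging $i$, we may assume $\ker\phi_i$ becomes zero in the limit, since its pullback's image in $\cO_Y$ vanishes. To make this work we first reduce to $A_i$ noetherian: write $A$ as a colimit of finitely generated $\mathbb Z$-subalgebras over the image of $S$, using the quasi-compactness of $S$ and \'etale-local reduction to affine $S$. In the noetherian case everything is automatically finitely presented.

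\textbf{Step 4: flatness.} If $X_T\to T$ is flat, then $X_{T_i}\to T_i$ is flat for large $i$, because $X\to S$ is finitely presented and flatness descends \cite[Tag 0CN7]{stacks-project}. Alternatively, as noted after \Cref{D:moduli_functor}, flatness of $Y_i$ is equivalent to flatness of $X_{T_i}$ given that $Q_i$ is flat.

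\textbf{Main obstacle.} The delicate step is Step 2: ensuring injectivity while keeping $Y_i$ finitely presented and compatible with the given $Y$. The trick we plan to use is the replacement of $Y_i'$ by the image of $\phi_i$, combined with flatness of $Q_i$ to commute the image with base change. If the image is not finitely presented over a non-noetherian $A_i$, we fall back on the reduction to noetherian $A_i$ described in Step 3.
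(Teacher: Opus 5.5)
Your argument is correct, and its skeleton matches the paper's: descend $Y$ and $f$, get finiteness of $f_i$, control the support through $\Fitt_0(Q_i)$, get $T_i$-flatness of $Q_i$ from limit results, and reduce to the noetherian case for finite presentation. The difference is in how you obtain injectivity.

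The paper keeps the naive descent $Y_0\times_{T_0}T_i$ and shows that $K_i=\ker(\cO_{Y_i}\to f_{i*}\cO_{X_{T_i}})$ vanishes for $i\gg 0$. It argues that the formation of $K_i$ commutes with base change, that $K_i$ is coherent after noetherian approximation, and that $K_i\otimes_{\cO_{T_i}}\cO_T\cong K=0$. You instead replace $Y_i'$ by the scheme-theoretic image of $f_i$. Injectivity is then built in, and $Y_i\times_{T_i}T\cong Y$ needs only $\Tor_1^{\cO_{T_i}}(Q_i,\cO_T)=0$ applied to $0\to\mathrm{im}\,\phi_i\to\mathcal{F}_i\to Q_i\to 0$.

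Your route is slightly more robust. The paper's claim that $K_i$ commutes with base change relies on $\mathrm{im}\,\phi_i$ (equivalently $f_{i*}\cO_{X_{T_i}}$) being $T_i$-flat. That is available in the flat version of the lemma, but your argument treats the flat and non-flat versions uniformly. In return, the paper gets the marginally sharper fact that the naively descended morphism is already a crimping for large $i$.

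Two small repairs to your write-up:
\begin{enumerate}
\item The remark in Step 3 that $\ker\phi_i$ ``becomes zero in the limit'' does no work. Finite presentation of $Y_i$ comes entirely from the noetherian reduction.
\item That reduction is cleanest if you replace $S$ by $T_0$ and $X$ by $X_{T_0}$, rather than reducing \'etale-locally to affine $S$, since $T\to S$ need not lift to an affine chart. Then take a noetherian model of $(T_0,X_{T_0})$, run your construction over a finite type algebra $B$ over that model mapping to $A$, factor $B\to A$ through some $A_i$, and base change. Base change preserves crimpings because $Q$ is flat.
\end{enumerate}
Finally, enlarge $i$ once more so that the locally constant rank of $Q_i$ over $T_i$ equals $d$ everywhere, not just on the image of $T$.
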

\begin{proof}
    Because $Y$ and $X_T$ are of finite presentation over $T$, one can replace the indexing set with a cofinal subset with initial index $0$ and find a $Y_0$ (flat and) of finite presentation over $T_0$ and a morphism $f_0 : X_{T_0} \to Y_0$ whose base change to $T$ is isomorphic to $f$ \cite[Tag 0EY1, Tag 02JO(3)]{stacks-project}. So it suffices to show that if $X_{T_0} \to Y_0$ is a morphism of spaces of finite presentation over $T_0$, and if $X_T \to Y_0 \times_{T_0} T$ is a (flat) family of crimpings, then so is $X_{T_i} \to Y_i := Y_0 \times_{T_0} T_i$ for $i \gg 0$. For $i \gg 0$, the morphism $X_{T_i} \to Y_i$ is finite by \cite[Tag 01ZO]{stacks-project}, so we may assume that for $f_0 : X_{T_0} \to Y_0$ as well.

    For all $i$, let $Q_i$ denote the finitely presented quasi-coherent sheaf $\coker(\cO_{Y_i} \to (f_i)_\ast(\cO_{X_{T_i}})) \in \mathrm{QCoh}(Y_i)$, and note that for $i \to j$, $Q_j$ is the pullback of $Q_i$ along $Y_j \to Y_i$. We claim that for $i\gg 0$, $Q_i$ has finite support over $T_i$: Indeed, let $Z_i \hookrightarrow Y_i$ be the closed subspace defined by the fitting ideal $\Fitt_0(Q_i)$. Then $Z_i \cong Z_0 \times_{T_0} T_i$ for all $i$, and $\lim_i Z_i \hookrightarrow Y$ is the closed subspace defined by $\Fitt_0(Q)$, which is finite over $T$ by hypothesis. It follows that $Z_i$ is finite over $T_i$ for $i \gg 0$ \cite[Tag 01ZO]{stacks-project}.

    Next we claim that as an $\cO_{T_i}$ module, i.e., after pushforward along $Y_i \to T_i$, $Q_i$ is locally free of finite rank for all $i\gg 0$: Indeed, $Q_i$ is finitely presented on $Y_i$ and scheme theoretically supported on $Z_i$, hence it is finitely presented on $Z_i$ as well. It follows that for $i \gg 0$, the pushforward of $Q_i$ along the finite morphism $Z_i \to T_i$ is finitely presented in $\QCoh(T_i)$. This implies that $Q_i$ locally free for $i \gg 0$ because $Q \cong Q_0 \otimes_{\cO_{Y_0}} \cO_Y$ is locally free of finite rank \cite[Tag 02JO(3)]{stacks-project}.

    The long-exact sequence for $\Tor$ implies that if $Q_i$ is $T_i$-flat, then the formation of the kernel $K_i := \ker(\cO_{Y_i} \to f_\ast(\cO_{X_{T_i}}))$ commutes with base change along any morphism to $T_i$. This observation, combined with absolute Noetherian approximation of $T$, implies that $K_i$ is finitely presented over $Y_i$ for $i\gg 0$. Finally, the fact that $f$ is a crimping implies that $K_i \otimes_{\cO_{T_i}} \cO_T \cong 0$ for $i$ large enough that $Q_i$ is $T_i$-flat, and it follows that $K_i \cong 0$ for $i\gg0$. This shows that $\cO_{Y_i} \to (f_i)_\ast(\cO_{X_{T_i}})$ is injective, and hence $f_i$ is a crimping, for $i \gg 0$.
\end{proof}

\subsection{The pinching morphism}
\label{SS: pinching}
\begin{defn}
Given a separated and finitely presented morphism $X \to S$, we define the functor $\cover(X)_{n,d} : \Sch_{/S}^{\rm{op}} \to \Set$ to take a scheme $T$ to the set of diagrams
\begin{equation}\label{E:hilbert_point}
    \xymatrix{Z \ar[r] \ar[d] & X_T \\ W & },
\end{equation}
where $Z \to X_T$ is a closed immersion, the schemes $Z$ and $W$ are finite and flat over $T$, $\cO_Z$ is localy free of rank $n$ over $T$, the morphism $\cO_{W} \to \cO_Z$ in $\QCoh(T)$ is injective, and $\cO_Z / \cO_{W}$ is locally free of rank $d$.
\end{defn}

\begin{lem}\label{lem: representability_of_H}

    The functor $\cover(X)_{n,d}$ is represented by a projective algebraic space over $\Hilb^n_X$, which itself is representable by a separated algebraic space of finite presentation over $S$ by \cite[Theorem 4.4]{MR2821738}, given $X \to S$ is separated and of finite presentation.
\end{lem}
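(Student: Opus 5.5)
The plan is to realize $\cover(X)_{n,d}$ as a closed subspace of a relative Grassmannian over $\Hilb^n_X$. Let $\cZ \hookrightarrow X \times_S \Hilb^n_X$ be the universal finite flat closed subspace, with $\pi : \cZ \to \Hilb^n_X$. Then $\cE := \pi_\ast \cO_{\cZ}$ is a locally free sheaf of $\cO_{\Hilb^n_X}$-algebras of rank $n$. The forgetful map $\cover(X)_{n,d} \to \Hilb^n_X$ sends a diagram \eqref{E:hilbert_point} to the closed immersion $Z \hookrightarrow X_T$. Over a $T$-point of $\Hilb^n_X$ corresponding to $Z$, giving $W$ with $Z \to W$ finite flat, $\cO_W \to \cO_Z$ injective and $\cO_Z/\cO_W$ locally free of rank $d$ is the same as giving an $\cO_T$-subalgebra $\cO_W \subset \cE_T$ that is a locally split subbundle of corank $d$. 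Indeed, $W = \Spec_T(\cO_W)$ is then automatically finite and flat of rank $n-d$, and the morphism $Z \to W$ is recovered from the inclusion. So the first step is to identify the fiber functor with the functor of corank-$d$ subbundles $\cF \subset \cE_T$ (with locally free quotient) that contain $1$ and are closed under multiplication.

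Next, represent the functor of all corank-$d$ subbundles by the relative Grassmannian $G := \Gr_{n-d}(\cE) \to \Hilb^n_X$. Since $\cE$ is locally free of finite rank, $G$ is projective over $\Hilb^n_X$: étale locally on $\Hilb^n_X$, where $\cE$ is free, it is a Plücker-embedded Grassmannian, and the Plücker embedding $G \hookrightarrow \bP(\bigwedge^{n-d}\cE)$ is defined globally. Let $\cF \subset \cE_G$ be the universal subbundle with quotient $\cQ = \cE_G/\cF$, locally free of rank $d$. The conditions cutting out $\cover(X)_{n,d}$ are then the vanishing of two sections of vector bundles on $G$. The unit condition is the vanishing of the image of $1$ under $\cO_G \to \cE_G \to \cQ$. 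The multiplication condition is the vanishing of the composite $\cF \otimes \cF \to \cE_G \to \cQ$, which is a section of $\cF^\vee\otimes\cF^\vee\otimes\cQ$. The vanishing locus of a section of a locally free sheaf is a closed subspace whose formation commutes with arbitrary base change. Hence the closed subspace of $G$ defined by these two vanishings represents $\cover(X)_{n,d}$, and it is projective over $\Hilb^n_X$ because it is closed in $G$.

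The final statement about $\Hilb^n_X$ is quoted from \cite[Theorem 4.4]{MR2821738}. The only step requiring care is the equivalence in the first paragraph between diagrams and subalgebra subbundles, specifically the base-change compatibility. One has to check that the hypotheses that $\cO_W \to \cO_Z$ is injective and $\cO_Z/\cO_W$ is locally free of rank $d$ make $\cO_W$ a locally direct summand, which holds since the cokernel is locally free. Pulling back along $T' \to T$ then preserves injectivity, so the functors agree on all $T$. Everything else is standard, so I expect no real obstacle. If $\Hilb^n_X$ is only an algebraic space, all of the above is carried out étale locally and descends, because the constructions are canonical.
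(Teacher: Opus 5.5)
Your proof is correct and follows the same route as the paper: both realize $\cover(X)_{n,d}$ as a closed subspace of the relative Grassmannian of the rank-$n$ algebra $\pi_\ast\cO_{\cZ}$ over $\Hilb^n_X$. The paper uses rank-$d$ quotients $\Gr_d$, which is equivalent to your rank-$(n-d)$ subbundles, and it cites Ishii for the closedness of the locus where the kernel is a subalgebra. You instead prove that closedness directly, writing the unit and multiplication conditions as vanishing loci of sections of $\cQ$ and $\cF^\vee\otimes\cF^\vee\otimes\cQ$, which also makes the base-change compatibility explicit.
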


\begin{proof}
We show $\cover(X)_{n,d}$ is represented by a closed subspace of a relative Grassmannian over $\Hilb^n_X$. 
Let $Z_{\mathrm{univ}} \subset X \times_S \Hilb^n_X $ be the universal closed subspace. 
Then $Z \to \Hilb^n_X$ is finite and flat, and its structure sheaf $\cO_{Z_{\rm{univ}}}$ is a locally free sheaf of algebras of rank $n$ over $\Hilb^{n}_{X}$. Then the Grassmannian $G:= \mathrm{Gr}_{d}(\mathcal{O}_{Z_{\mathrm{univ}}})$ is representable by a  projective algebraic space over $\Hilb^{n}_{X}$.
After pulling back $\mathcal{O}_{Z_{\mathrm{univ}}}$ to $G$, write it as $\mathcal{O}_{Z_G}$. There is a universal exact sequence $ 0 \to K_{\mathrm{univ}} \to \mathcal{O}_{Z_G} \to Q_{\mathrm{univ}} \to 0$ of coherent $\mathcal{O}_G$-sheaves on $G$, with $Q_{\mathrm{univ}} $ locally free of rank $d$ over $G$. We restrict to the smallest closed subspace  of $G$ where $K_{\mathrm{univ}}$ is an $\mathcal{O}_G$-subalgebra of $\mathcal{O}_{Z_G}$ \cite[Theorem 1]{MR602077}, which we call $G^{\mathrm{alg}}$.  

A $T$-point of $G^{\mathrm{alg}}$ is given by a closed immersion $Z \hookrightarrow X_T$, such that $Z \to T$ is finite and flat, $\mathcal{O}_Z$ is locally free of rank $n$ over $T$, together with a rank $d$ quotient $\mathcal{O}_Z \twoheadrightarrow Q$ whose kernel $K$ is an $\mathcal{O}_T$-subalgebra of $\mathcal{O}_Z$. Setting $W: = \Spec_T K$, we recover a $T$-point of $\cover(X)_{n,d}$. 
Conversely, given a diagram \eqref{E:hilbert_point} over $T$, then $\mathcal{O}_W \to \mathcal{O}_Z$ is injective, and the quotient $Q:= \mathcal{O}_Z / \mathcal{O}_W$ is locally free of rank $d$ over $T$. Since $\mathcal{O}_W$ is an $\mathcal{O}_T$-subalgebra of $\mathcal{O}_Z$, the diagram defines an element in the set $G^{\mathrm{alg}}(T)$.   
Therefore, we conclude that  $\cover(X)_{n,d}$ is represented by $G^{\mathrm{alg}}$.  


\end{proof}


\begin{prop} \label{prop: pushout}
    If $Z \to W$ is a family of crimpings over $T$, $Z \hookrightarrow X$ is a closed immersion, and $X$ is flat and finitely presented over $T$, let $Y := X \cup_Z Y$ be the pushout in the category of quasi-separated algebraic spaces. Then the canonical morphism $X \to Y$ is a \emph{flat} family of crimpings over $T$.
    
    In particular the assignment $(Z \hookrightarrow X_T, Z \to W) \mapsto (X_T \to X_T \cup_Z W)$ defines a morphism of functors $\cover(X)_{n,d} \to \crimp_{S}^d(X)$.
    
\end{prop}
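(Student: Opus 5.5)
We treat the pushout $Y := X \cup_Z W$ (the statement writes $X\cup_Z Y$, meaning $W$). Its existence as a quasi-separated algebraic space, the isomorphism $Z \cong X \times_Y W$, and compatibility of its formation with smooth base change on $Y$ are given by \cite[Theorem 4.2, Lemma 4.3]{MR4699880}, as recalled above. Since $Z\to W$ is finite (and $W$ is finite over $T$), the pushout is computed étale locally on $Y$ by a fiber product of rings. Concretely, étale locally $X = \Spec A$, $Z = \Spec A/I$, $W = \Spec B$ with $B \hookrightarrow A/I$, and $Y = \Spec R$ with $R = A \times_{A/I} B$. We then verify each condition of \Cref{D:moduli_functor} in this local model.

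\textbf{Finiteness and injectivity.} From the cartesian square of rings, the map $R \to A$ is injective: an element $(a,b)$ with $a=0$ has $b \mapsto 0$ in $A/I$, hence $b=0$ because $B \hookrightarrow A/I$. Moreover
\[
A/R \cong (A/I)/B = \cO_Z/\cO_W ,
\]
so the quotient $Q = f_*\cO_X/\cO_Y$ is identified with the pushforward of $\cO_Z/\cO_W$. This module is $T$-flat of finite support over $T$, and locally free of rank $d$ after pushing forward to $T$; hence the corank is $d$. Finiteness of $X \to Y$ follows from finiteness of pushouts along closed immersions: $A$ is generated as an $R$-module by $1$ together with lifts of finitely many $B$-module generators of $A/I$, because $A/I$ is a finite $B$-module and $I \subset R$.

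\textbf{Flatness and finite presentation.} Flatness of $Y \to T$ follows as in the remark after \Cref{D:moduli_functor}: $0 \to R \to A \to Q \to 0$ with $A$ and $Q$ both $T$-flat gives $R$ $T$-flat. For finite presentation of $Y$ over $T$, we first get finite type by an Artin--Tate style argument: $R$ contains the ideal $I$ and surjects onto $B$, and $A$ is finite over $R$, so $R$ is of finite type over $T$. To upgrade to finite presentation, we reduce via \Cref{lem: limit preservation}-style Noetherian approximation: descend $X$, $Z$ and $W$ to a Noetherian base $T_0$, where finite type implies finite presentation, and use that pushouts commute with the flat base change $T \to T_0$. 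Flat base change is applicable here because $Q$ is flat, so the fiber product of rings is preserved by arbitrary base change, by the Tor argument given after \Cref{D:moduli_functor}.

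\textbf{Main obstacle and functoriality.} The main obstacle is the finite presentation claim together with base-change compatibility of the pushout along arbitrary $T' \to T$, which is what functoriality of $\cover(X)_{n,d} \to \crimp^d_S(X)$ requires. We handle both by the exact sequence $0 \to R \to A \oplus B \to A/I \to 0$: tensoring with any $\cO_T$-module $M$ stays exact because $A/I$ is $T$-flat, so $R \otimes M = (A\otimes M)\times_{(A/I)\otimes M}(B\otimes M)$. Given a $T$-point $(Z\hookrightarrow X_T, Z\to W)$ of $\cover(X)_{n,d}$, the map $Z\to W$ is a family of crimpings of corank $d$, so the first part applies. The base-change compatibility just established shows that the assignment $(Z\hookrightarrow X_T, Z\to W)\mapsto (X_T\to X_T\cup_Z W)$ is a natural transformation $\cover(X)_{n,d}\to\crimp^d_S(X)$.
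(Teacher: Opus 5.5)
Your proposal is correct and follows essentially the same route as the paper. You reduce \'etale locally on $Y$ to the fiber product $A\times_{A/I}B$, identify the quotient with $(A/I)/B$, get finiteness by lifting $B$-module generators of $A/I$, get $T$-flatness from $0\to A\times_{A/I}B\to A\to Q\to 0$, and apply Artin--Tate over a Noetherian base. You then remove the Noetherian hypothesis by descending $Z\to W$ via \Cref{lem: limit preservation} and using flatness of the descended quotient to commute the fiber product with base change.

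A few small corrections. Artin--Tate only gives finite type after the Noetherian reduction, not over an arbitrary $T$ first. The map $T\to T_0$ is not flat; what you actually use is flatness of $Q_0$ over $T_0$. In the general statement $\cO_Z$ need not be $T$-flat, so base change of the fiber product should be deduced from flatness of $Q$ rather than of $A/I$.
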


\begin{proof}

We define this morphism locally over $\Spec R \subset T$. The pushout $Y = X \cup_{Z} W$ exists and is an algebraic space over $\Spec R$, and the map $X \to Y$ is affine \cite[Theorem 1.8]{MR4699880}. We verify that it is a family of crimpings of $X$ by working étale locally over $Y$. Since the formation of pushout is compatible with étale base change, it suffices to work after replacing $Y$ by an affine étale cover. Hence we may assume $Y = \Spec C$. 
Then $Y$ is a pushout of affine schemes. Indeed, 
since $X \to Y$ is affine, we may write $X = \Spec A$. Let $I$ denote the ideal defining the closed subscheme $Z \hookrightarrow X$. Then $Z =\Spec A/I$. Because $Z \to W$ is a family of crimpings, $W = \Spec B$, and $B \subset A/I$ is a $R$-subalgebra.
Therefore, we have a fibered product square in the category of $R$-algebras
\[
\begin{tikzcd}
C \ar[r] \ar[d] & A \ar[d, "\pi"] \\ B \ar[r,hook] & A/I.
\end{tikzcd}
\]

Given that $B \to A/I$ is injective, it follows that $C \cong \pi^{-1}(B)$. By definition, the map $\pi^{-1}(B) \subset A$ is a subalgbra.  We proceed to verify that $\pi^{-1}(B) \to A$ is a crimping. 
The quotient $ Q = A / \pi^{-1}(B) \cong (A/I)/ B$ is a locally free $R$-module of rank $d$. Note that $\pi^{-1}(B)$ fits in the short exact sequence as $R$-modules 

$$0 \to \pi^{-1}(B) \to  A \to A/( \pi^{-1}(B)) \to 0. $$ 

We know $A/( \pi^{-1}(B))$ is $R$-flat since it is locally free over $R$, and $A$ is $R$-flat by assumption. It follows that $\pi^{-1}(B)$ is flat over $R$. Recall that $A/I$ is finitely generated as a $B$-module; by lifting the $B$-module generators of $A/I$ to $A$, and adjoining $1$, we deduce that $A$ is finitely generated as a $\pi^{-1}(B)$-module. Assume that $R$ is Noetherian. 
Since $A$ is of finite type over $R$ and finite as a module over $\pi^{-1}(B)$, the Artin-Tate lemma  implies that $\pi^{-1}(B)$
is of finite type, and hence of finite presentation over $R$.  

Finally, we can apply standard approximation techniques to remove the assumption that $R$ is Noetherian. Let $D = A/I$. Then $B \subset D$ is  a family of crimpings over $R$. By \Cref{lem: limit preservation}, there exist a Noetherian subring $R' \subset R$, and a family of crimpings $B' \subset D'$ over $R'$ whose base change to $R$ is identified with $B \subset D$. After replacing $R'$ with a larger Noetherian subring of $R$, we may also assume there is an $R'$-algebra $A'$ over $R'$, together with a surjection $A' \to D'$ whose base change is identified with the surjection $A \to D$. It remains to show that the canonical homomorphism $(A' \times_{D'} B') \otimes_{R'} R \to A \times_D B \cong \pi^{-1}(B)$ is an isomorphism. 
This follows from the  $R'$-flatness of the quotient $Q' = D'/B'$. The short exact sequence 
\[
0 \to  A' \times_{D'} B' \to A' \to Q' \to 0,
\] 
remains exact after applying $(-)\otimes_{R'} R$ by $R'$-flatness of $Q'$. Since $A'\otimes_{R'} R \cong A$, and $Q' \otimes_{R'} R \cong Q$, we obtain $(A' \times_{D'} B') \otimes_{R'} R \cong \ker ( A\twoheadrightarrow Q )$. But $\ker ( A\twoheadrightarrow Q ) = A \times_D B$. Hence $(A' \times_{D'} B') \otimes_{R'} R \cong A \times_D B$. 
\end{proof}

\begin{prop} \label{P: boundness}
The morphism $\cover(X)_{ d(d+1), d} \to \crimp^d_S (X)$ is surjective on points over any field. \end{prop}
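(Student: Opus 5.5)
Fix a field $k$ with $T=\Spec k \to S$ and a flat family of crimpings $f : X_k \to Y$ of corank $d$. The idea is to exhibit $f$ as a pushout $X_k \cup_Z W$ for a finite closed subscheme $Z \hookrightarrow X_k$ of length $d(d+1)$ and a subalgebra $\cO_W \subset \cO_Z$ of colength $d$. By the discussion preceding \Cref{L:locality_of_crimpings}, any closed subscheme $Z \supseteq \Cond(f)$ works. Indeed, set $W := Z\times_{X_k}$-image, i.e. $\cO_W := $ the image of $\cO_Y$ in $f_\ast\cO_Z$. Then $\cO_Y \to f_\ast \cO_X$ factors through the pullback $f_\ast\cO_X\times_{\cO_Z}\cO_W$. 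This pullback already contains $\cO_Y$ and has the same cokernel $Q$ once we check that $\cO_Y \supseteq \ker(f_\ast\cO_X\to \cO_Z)$. That containment holds because the ideal of $Z$ lies in $\Ann(Q)\subset \cO_Y$.

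So the problem is local and reduces to algebra at the finitely many points of $\supp Q$. At each such point $y$ we are given a local $k$-algebra inclusion $B := \cO_{Y,y} \subset A := (f_\ast\cO_X)_y$ with $\dim_k A/B = d_y$ and $\sum d_y = d$. It suffices to find an ideal $J\subset A$ with $J\subset B$ and $\dim_k A/J \le d_y(d_y+1)$.

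\textbf{Key step: bounding the conductor colength.} Let $\mathfrak m$ be the maximal ideal of $B$ and $\mathfrak c = \Ann_B(A/B)$ the conductor. The chain $A\supset B \supset \mathfrak m \supset \mathfrak m^2\supset\cdots$ acts on $M=A/B$, and Nakayama shows $\mathfrak m^{d_y}M=0$, since each strict step in $M\supsetneq \mathfrak m M\supsetneq\cdots$ drops the length. Hence $\mathfrak m^{d_y}A\subset B$, so $J:=\mathfrak m^{d_y}A$ is an ideal of $A$ contained in $B$. We need $\dim_k A/J \le d_y(d_y+1)$. Writing $\dim A/\mathfrak m^{d_y} A = \dim A/B + \dim B/\mathfrak m^{d_y}A$, it remains to bound $\dim B/\mathfrak m^{d_y}A \le d_y^2$. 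For this we filter by $\mathfrak m^iA\cap B$ and compare successive quotients $\mathfrak m^iA/\mathfrak m^{i+1}A$. Each has dimension bounded by the number of generators of $A$ as a $B$-module, at most $1+d_y$ (lifts of a basis of $A/B$ plus $1$), times the residue degree. This is where care is needed.

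\textbf{Main obstacle and fallback.} The first estimate above is too crude. It gives $d_y(d_y+1)$ for $\dim A/\mathfrak m^{d_y}A$ only if each graded piece $\mathfrak m^iA/\mathfrak m^{i+1}A$ has dimension at most $d_y+1$ over $k$. That requires, for instance, $A$ to be generated by $1+d_y$ elements and $\mathfrak m$ to act through the residue field $k$. Residue field extensions ($\kappa(y)\ne k$), and the possibility that $A$ is semilocal rather than local, break this. So I would instead work with $k$-dimensions directly: the $k$-span of $B$-generators. Alternatively I would replace $\mathfrak m^{d_y}$ by the smaller-colength ideal $\mathfrak c$ and prove $\dim_k A/\mathfrak c \le d_y(d_y+1)$ directly. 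The route for the latter is the inclusion $\mathfrak c \supseteq \Ann_A$ of the $A$-submodule generated by $M$, together with the estimate $\dim_k A/\Ann_B(M)\le \dim B/\Ann_B(M)+d_y$ and $\dim B/\Ann_B(M)\le d_y^2$. The last bound holds because $B/\Ann_B(M)$ embeds in $\mathrm{End}_k(M)$ as a commutative subalgebra, and one can use the sharper Schur-type bound on commutative subalgebras if needed. In fact the embedding into $\mathrm{End}_k(M)$ immediately gives $\le d_y^2$, so this route closes cleanly. Thus $Z := \Spec(\cO_X/\Cond\text{-ideal})$, enlarged if necessary to have length exactly $d(d+1)$ by adding extra points of $X_k$ (possibly after a finite field extension, which is harmless for surjectivity on points) with $W$ mapping isomorphically there, gives the desired $k$-point of $\cover(X)_{d(d+1),d}$. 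By \Cref{prop: pushout} this point maps to $f$.
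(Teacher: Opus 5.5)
Your core argument is the paper's: you realize $f$ as the pushout along $\Cond(f) \hookrightarrow X_k$ and $\Cond(f) \to \Spec_Y(\cO_Y/\Ann(Q))$. You then bound the conductor colength by $\dim_k \cO_Y/\Ann(Q) + d \le d^2 + d$, using the embedding $\cO_Y/\Ann(Q) \hookrightarrow \mathrm{End}_k(Q)$. The paper does this globally, since $Q$ is already a $d$-dimensional $k$-vector space. So you can drop the localization at points of $\supp Q$ and the $\mathfrak{m}^{d_y}A$ detour. You can also drop the sentence about ``$\Ann_A$ of the $A$-submodule generated by $M$''; it does not parse, since $M = A/B$ is not an $A$-module.

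The one place you go beyond the paper is padding $Z$ to length exactly $d(d+1)$. The paper's proof stops at a point of $\cover(X)_{n,d}$ with $n \le d(d+1)$. Your observation that any finite closed $Z \supseteq \Cond(f)$ works (because $I_Z \subset \Ann(Q) \subset \cO_Y$) is correct. It lets you pad whenever $X_{\bar k}$ has room, e.g.\ when it is positive-dimensional. But padding is not always possible. Take $S = \Spec k$, $X = \Spec k[t]/(t^3)$ and $d = 2$. The inclusion $k \subset k[t]/(t^3)$ is a flat crimping of corank $2$. However, $\Hilb^6_X$ has no field-valued points, so $\cover(X)_{6,2}$ is empty. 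Thus the statement read literally fails for such $X$. What your argument and the paper's both prove is that $\bigsqcup_{n \le d(d+1)} \cover(X)_{n,d} \to \crimp^d_S(X)$ is surjective on points. That is all the quasi-compactness and properness steps in the proof of \Cref{T:main_algebraicity} use.
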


\begin{proof} Let $f: X_{k} \to Y$ be a $k$-point of $\crimp_S^d(X)$. In the preliminaries we saw that the flat crimping $X_{k} \to Y$ can be recovered from the  pushout of the diagram $[\Spec_{Y}(\mathcal{O}_{Y}/\Ann(Q) ) \leftarrow \mathrm{Cond}(f) \hookrightarrow X_k]$. The flatness of $\mathrm{Cond}(f)$ and $\Spec_{Y}(\mathcal{O}_{Y}/\Ann(Q) )$ is automatic since we are working over a field. The diagram above defines a point in $\cover(X)_{n, d}$, where $n$ is the colength of the conductor ideal. Our goal is to provide an upper bound for $n$. We have a short exact sequence

\[
	0 \to {\mathcal{O}_{Y}/\Ann(Q)} \to  {f_*\mathcal{O}_{X_k}/\Ann(Q)} \to  Q \to 0.
 \] 

It follows that $\dim_{k}({\mathcal{O}_{X_k}/\Ann(Q)}) = \dim_{k}({\mathcal{O}_{Y}/\Ann(Q)}) + \dim_{k}(Q)$. Since the annihilator is precisely the kernel of the map $\cO_{Y} \to \mathrm{End}_{k}(Q)$ given by multiplication, it follows that $\dim_{k}(\cO_{Y}/ \Ann(Q)) \leq \dim_k \mathrm{End}_{k}(Q) = (\dim_{k}(Q))^{2} =d^2$.  We conclude that $\dim_k({\mathcal{O}_{X}/\Ann(Q)}) \leq d(d+1)$. \end{proof}
\begin{rem}\label{R: curve_bound}
    If $X$ is a smooth curve over a field $k$, then $ d +1 \leq \dim_k(\mathcal{O}_X /\Ann(Q)) \leq 2d$ by \cite[Chapter IV, \textsection 3, Proposition 7]{MR918564}. Therefore instead of letting $n = d(d+1)$, we may take the sharper bound $2d$.  
\end{rem}

\subsection{Deformation theory of crimpings} 
\label{SS: deformation}
Let $S$ be an excellent scheme, and let $X \to S$ be a separated morphism of finite presentaion.
Throughout the discussion of the deformation theory, $T =\Spec R$ will be an affine $S$-scheme, locally of finite type over $S$. We denote the category of all square-zero extensions of $T$ over $S$ by $\exal_S(T)$. Given the assumption that $T$ is affine, any such extension $T \hookrightarrow T'$ is also affine. The kernel of $\mathcal{O}_{T'} \twoheadrightarrow \mathcal{O}_T$ is canonically a quasi-coherent $\mathcal{O}_T$-module. Let us denote $T' = \Spec R'$.  
We write $\exal_S(T,J)$ for the fiber category of $\exal_S(T)$ consisting of square-zero extensions of $T$ by $J \in \QCoh(T)$. 

Let $f:X_T \to Y$ be a flat family of crimpings over $T$. We consider the problem of extending $(f: X_T \to Y)$ over a square-zero extension $T \hookrightarrow T'$. The set of such extensions is denoted by 
\[
\exal_f (T'):= \{f': X_{T'} \to Y' \in \crimp_S^d(X)(T') \mid f \cong f' \times_{T'} T: X_T \to Y' \times_{T'} T\}.
\]
\begin{lem}\label{L: local_def}
    Let $X = \Spec A_0$ be a \emph{flat} affine scheme over $S$, and let $f: X_T \to Y:= \Spec B$ be a flat family of crimpings of corank $d$ over $T$. Suppose that $J \in \QCoh(T)$ is a quasi-coherent module on $T$. For arbitrary square-zero thickening $T \hookrightarrow T'$ with ideal $J$, 
    \begin{enumerate}
        \item there is a canonical obstruction class 
        \[
        \alpha \in \Ext^1_B(L_{B/R}, J \otimes_R Q), 
        \]
        whose vanishing is equivalent to the existence of a flat extension of $f$ over $T'$. 
        \item If $\alpha$ vanishes, then the set of such flat extensions $\exal_f(T')$ is principal homogeneous  under $\Ext^0_B (L_{B/R}, J \otimes_R Q) \cong \Hom_B (\Omega_{B/R}, J \otimes_R Q)$.
        
    \end{enumerate}
\end{lem}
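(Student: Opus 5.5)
The plan is to translate the deformation problem into a problem about deforming the subalgebra $B \subset A := A_0 \otimes_{R_0} R$ inside the fixed flat deformation $A' := A_0 \otimes R'$ of $A$, and then to read off the obstruction and torsor from the standard theory of deformations of algebras, i.e.\ of lifts of the ring map $R \to B$.

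\textbf{Step 1: reformulate extensions as algebra deformations.} An element of $\exal_f(T')$ is an $R'$-subalgebra $B' \subset A'$ such that $A'/B'$ is $R'$-flat and $B' \otimes_{R'} R \cong B$ inside $A$. Indeed, by the remark after \Cref{D:moduli_functor}, flatness of $Q' = A'/B'$ together with flatness of $A'$ gives flatness of $B'$, and base change then recovers $B$. The first task is to show this is equivalent to a flat $R'$-algebra $B'$ lifting $B$, together with an $R'$-algebra map $\phi : B' \to A'$ lifting $B \hookrightarrow A$.

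For the converse direction, suppose such a $\phi$ is given. Tensor the sequence $0 \to J \to R' \to R \to 0$ with $B'$ and with $A'$, and compare the two rows via $\phi$. Using flatness of $B'$ and $A'$, the snake lemma gives
\[
0 \to J \otimes_R B \to J \otimes_R A \to \coker\phi \to Q \to 0 ,
\]
so $\phi$ is injective. Moreover the cokernel $Q'$ is an extension of $Q$ by $J \otimes_R Q$, and the local flatness criterion (nilpotent version) shows $Q'$ is $R'$-flat. So flat extensions correspond to pairs $(B', \phi)$, taken up to isomorphism of $B'$ compatible with $\phi$. Finite presentation of $B'$ follows as in \Cref{prop: pushout}.

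\textbf{Step 2: apply the cotangent complex.} A convenient packaging is the following. A pair $(B', \phi)$ is the same as an $R'$-subalgebra of $A'$ lifting $B$, i.e.\ a lift of the composite $B \to A = A' \otimes R$. By the snake computation above this lands in $A'/(J \otimes_R B) =: E$, a square-zero extension of $A$ by $J\otimes_R Q$... I would instead organize it more cleanly as follows.

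Set $E := A' \times_A B$. This is a square-zero extension of $B$ by $J \otimes_R A$, and it receives the map from $R'$. A candidate $B'$ is an $R'$-subalgebra $B' \subset E$ mapping onto $B$ with kernel exactly $J\otimes_R B \subset J \otimes_R A$. Equivalently, it is a section over $R'$ of the pushed-out extension
\[
0 \to J\otimes_R Q \to \bar E := E/(J\otimes_R B) \to B \to 0 .
\]
The sections of $\bar E \to B$ compatible with the structure map $R' \to \bar E$ (which factors through $R$, since $J$ maps into $J\otimes B$) are $R$-algebra splittings of an extension of $B$ by $J\otimes_R Q$ over $R$.

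Standard theory now applies: the class $[\bar E] \in \exal_R(B, J\otimes_R Q) \cong \Ext^1_B(L_{B/R}, J\otimes_R Q)$ is the obstruction $\alpha$, and when it vanishes the splittings form a torsor under $\mathrm{Der}_R(B, J\otimes_R Q) = \Hom_B(\Omega_{B/R}, J\otimes_R Q)$. I then check that the splitting $s$ gives $B' := $ the preimage of $s(B)$ in $E$, which is a subalgebra of $A'$ with the right kernel, and that this assignment is bijective.

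\textbf{Main obstacle.} The delicate point is the correctness of the correspondence in Step 2, namely that each section yields $B'$ with $A'/B'$ flat and $B' \otimes R = B$. I would verify it via the snake sequence of Step 1. Two further checks are needed: that $\bar E$ is indeed $R$-linear (i.e.\ that $J \cdot E \subset J \otimes B$), and that distinct sections give distinct subalgebras, so that the torsor is faithful.
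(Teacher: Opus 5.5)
Your proposal is correct and follows the paper's proof essentially step for step. Like the paper, you form $E = A'\times_A B = \pi^{-1}(B)$ and pass to the $R$-algebra extension $0 \to J\otimes_R Q \to E/(J\otimes_R B) \to B \to 0$. You then identify flat extensions with its sections via $B' = p^{-1}(s(B))$ and the local flatness criterion, and read off the obstruction and the torsor from $\exal_R(B, J\otimes_R Q)\cong \Ext^1_B(L_{B/R}, J\otimes_R Q)$.

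The only point handled differently is finite presentation of $B'$. The paper reduces to coherent $J$, so that $R'$ is Noetherian, and applies Artin--Tate. Your pointer to \Cref{prop: pushout} for this step is vague. It is easily settled directly, though: lifting generators and relations along the square-zero ideal $J$, using flatness of $B'$, shows that $B'$ is finitely presented over $R'$.
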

A special case of part two where $T' =\Spec k[\epsilon]/(\epsilon^2)$ is treated in \cite[Theorem 1]{MR447252}, and \cite[Proposition 1]{MR602077}.
\begin{proof}
    Using \Cref{lem: limit preservation}, we may assume $J$ is coherent.
    Let $A:= A_0 \otimes_S R$, $A':= A_0 \otimes_S R'$. Define $\exal_{A,B} (T')$ to be the set of all flat $R'$-subalgebras $B' \subset A'$ such that $B' \otimes_{R'} R \cong B$. Then by definition, given an element $(f': \Spec A' \to Y') \in \exal_{f}(T')$, $\Gamma(Y',\mathcal{O}_{Y'}) \in \exal_{A,B}(T')$.  On the other hand, if $B' \in \exal_{A,B} (T')$, then we show $\Spec A' \to \Spec B' \in \crimp_S^d(X)(T')$, and therefore provides an extension of $f$ over $T'$.  Because $B'$ is flat over $T'$, tensoring the sequence $0\to J \to R' \to R \to 0$ with $B'$,  we have $0 \to J \otimes_R B \to B' \to B \to 0$. Let $Q'$ denote the quotient $A'/B'$. Then $Q'/JQ' \cong \coker(B'/JB' \to A'/JA') = \coker(B \to A) =Q$. Applying the snake lemma to
    \[
\begin{tikzcd}
0 \arrow[r] & J\otimes_{R} B \arrow[r] \arrow[d] & B' \arrow[r] \arrow[d] & B \arrow[r] \arrow[d] & 0 \\
0 \arrow[r] & J\otimes_{R} A \arrow[r]  & A' \arrow[r]  & A \arrow[r]  & 0, 
\end{tikzcd}
\]
    because all three vertical maps are injective, we obtain a short exact sequence $0 \to J \otimes_R Q \to Q' \to Q \to 0$. Hence by the local criterion for flatness, $Q'$ is flat over $T'$. Moreover, by the Nakayama's lemma, 
    $Q'$ is finite and therefore locally free of rank $d$ over $T'$. Now $A'$ is finitely generated as a $B'$-module. Because $J$ is coherent, the extension $R'$ remains Noetherian, by the Artin-Tate lemma, the subalgebra $B'$ is finitely generated over $R'$. This proves that $\Spec A' \to \Spec B'$ is a flat family of crimpings of corank $d$ over $T'$. Therefore, the described relation gives a one-to-one correspondence between two extension problems. 

    Let us focus on $\exal_{A,B} (T')$. Let $\pi$ denote the quotient map $A' \to A$, and let $E$ be the sublagebra of $A'$ given by the preimage $\pi^{-1}(B)$. Then $E$ maps onto $B$, with kernel $J \otimes_R A$. Because $Q$ is flat as a $T$-module, $J \otimes_R B \subset J \otimes A \subset E$. Furthermore, $J \otimes_R B$ is an ideal of $E$:   if $x \in E$, then $x$ acts on $J \otimes_R B$ through $\pi(x) \in B$, by multiplying  on the second tensor factor. Hence $x \cdot (J \otimes_R B) \subset J \otimes_R B$.  Noting that $(J \otimes_R A) /(J \otimes_R B) = J \otimes_R Q$,  quotienting out by $J \otimes_R B$ gives a short exact sequence \begin{equation}
        0 \to J \otimes_R Q \to E/ (J \otimes_R B) \to B \to 0. 
    \end{equation}
    A section $s: B \to E/ (J \otimes_R B)$ determines an element of $\exal_{A,B}(T')$ as follows. Let $p: E \to E/ (J \otimes_R B)$ denote the quotient map . Given such a section $s: B \to E/ (J \otimes_R B)$, set $B' = p^{-1}(s(B))$. Then $B'$ is a $R'$-subalgebra of $A'$, and $B' \otimes_{R'} R \cong B'/ (J \otimes_R B) \cong B$. Therefore, by the local criterion for flatness, $B'$ is flat over $R'$, and $B'$ gives a flat extension of $B$.  Conversely, if $B' \in \exal_{A,B}(T')$ is a flat extension, then $\pi(B') \subset B$ and therefore $B'\subset E$. Moreover, $B'/(J \otimes_{R} B) \cong B$, and this isomorphism defines a section $s: B \xrightarrow{\sim} B'/(J \otimes_{R} B) \hookrightarrow E/ (J \otimes_R B)$. 

    An extension of $B$ by $J \otimes_R Q$ over the square-zero thickening $T \hookrightarrow T'$ defines a canonical class $\alpha \in \Ext^1_B (L_{B/R}, J \otimes_R Q)$ by \cite[Tag 08UX] {stacks-project}. The vanishing of $\alpha$ is equivalent to the existence of a section of $E/(J \otimes_R B) \twoheadrightarrow B$. This establishes part one. For part two, again by \cite[Tag 08UX] {stacks-project}, given the vanishing of the obstruction class, the set of automorphisms of the trivial extension is principal homogeneous under $\Ext^0_B(L_{B/R}, J \otimes_R Q) \cong \Hom_B(\Omega_{B/R}, J \otimes_R Q)$. Since the set of automorphisms can be identified with the set of sections of $E/(J \otimes_R B) \twoheadrightarrow B$, we conclude part two.       
\end{proof}

Assume that $J \in \QCoh(T)$. Let $T[J] := \Spec (R \oplus J)$ denote the trivial square-zero thickening by $J$. We define the deformation functor at a flat family of crimpings $(f: X_T \to Y) \in \crimp_S^d(X)(T)$ as $\mathrm{Def}_{f}: \QCoh (T) \to \Ab, J \mapsto \exal_f (T[J])$. 
\begin{prop} \label{P: deformation}
Let $X \to S$ be flat, and 
    let $f: X_T \to Y$ be a flat family of crimpings of corank $d$ over $T$. Then the deformation functor at $f$ is given by 
    \[
    \mathrm{Def}_f (J) = \Ext^0_{\mathcal{O}_Y} (L_{Y/T}, J \otimes_T Q). 
    \] 
\end{prop}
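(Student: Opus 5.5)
The plan is to globalize \Cref{L: local_def}, specialized to the trivial extension $T' = T[J]$, by gluing along an étale (or Zariski) cover of $Y$. First I reduce to the case where $X$ is affine over $T$. The map $f$ is finite, hence affine, so over an affine open or étale chart $V = \Spec B \subset Y$ the preimage $f^{-1}(V) = \Spec A$ is affine and $\Spec A \to \Spec B$ is a flat crimping of corank $d_V \le d$. Since $T[J] \to T$ is a homeomorphism, a deformation $f'$ of $f$ over $T[J]$ has underlying space $|Y|$. Its restriction to $V$ is therefore a deformation of $f|_V$: concretely, $B' = \Gamma(V', \cO_{V'})$ is a flat $R\oplus J$-subalgebra of $A' = A \otimes_R (R\oplus J)$ lifting $B$.

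For the trivial extension the obstruction $\alpha$ of \Cref{L: local_def} vanishes. Indeed, $B \oplus (J\otimes_R B) \subset A \oplus (J \otimes_R A)$ is a flat extension, so the short exact sequence $0 \to J\otimes_R Q \to E/(J\otimes_R B) \to B \to 0$ splits canonically. Hence $\exal_{f|_V}(T[J])$ is a \emph{pointed} torsor, and subtracting the trivial deformation identifies it canonically with $\Hom_B(\Omega_{B/R}, J\otimes_R Q)$. Concretely, a section $s$ differs from the trivial one $b \mapsto (b,0)$ by an $R$-derivation $B \to J\otimes_R Q$.

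The main point, and the step I expect to need the most care, is checking that this identification is compatible with localization. If $V_1 = \Spec B_1 \to V = \Spec B$ is étale, with $A_1 = A\otimes_B B_1$, then the constructions of $E$, of the quotient $E/(J\otimes B)$, and of the trivial section all commute with the flat base change $B \to B_1$. Here I use that $Q$ is $T$-flat, so $J\otimes_R Q$ localizes to $J\otimes_R Q_1$, and that $\Omega_{B_1/R} = \Omega_{B/R}\otimes_B B_1$. Thus derivations restrict to derivations, and $V \mapsto \exal_{f|_V}(T[J])$ is a sheaf isomorphic to $\mathcal{H}om(\Omega_{Y/T}, J\otimes Q)$.

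The sheaf property on the left-hand side holds because subalgebras $\cO_{Y'} \subset f'_\ast \cO_{X_{T[J]}}$ glue uniquely. Flatness, finiteness and the crimping conditions are local, and the finite-support condition is automatic since $|Q'| = |Q|$. Taking global sections gives $\mathrm{Def}_f(J) \cong \Hom_{\cO_Y}(\Omega_{Y/T}, J\otimes_T Q) = \Ext^0_{\cO_Y}(L_{Y/T}, J\otimes_T Q)$. The last equality uses $H^0(L_{Y/T}) = \Omega_{Y/T}$ and the fact that $L_{Y/T}$ is connective. Finally, I check functoriality in $J$, so that the group structure (Baer sum) matches addition of derivations. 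This follows since everything is built from the canonical split extension, naturally in $J$.
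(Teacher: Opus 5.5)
Your proposal follows the same route as the paper: apply \Cref{L: local_def} on affine (or étale) charts of $Y$ and glue using compatibility with localization. It actually supplies details the paper leaves implicit, notably that the trivial deformation gives compatible canonical base points, so the local torsors glue directly to the sheaf $\mathcal{H}om(\Omega_{Y/T}, J\otimes_T Q)$ with no $H^1$ issue. One small imprecision: $f|_V$ need not have finite support over $T$ on an open chart, so ``corank $d_V$'' is not meaningful there; this does no harm, because the local argument only needs $T$-flatness of $Q$, and finite support of the glued $Q'$ is automatic, as you note.
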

\begin{proof}
    Since $f$ is a finite morphism, affine-locally on $Y$, $\exal_f(T[J])$ is provided by \Cref{L: local_def}. Because the extension is compatible under localization, the affine-local descriptions glue to give the global result. 
\end{proof}

\begin{prop} \label{P: obstruction}
Let $X \to S$ be flat. 
 Let $f: X_T \to Y$ be a flat family of crimpings of corank $d$ over $T$, and let $T \hookrightarrow T'$ be any square-zero thickening by $J \in \QCoh(T)$.  There is a canonical element $$\delta \in \Ext^1_{\mathcal{O}_Y} (L_{Y/T}, J \otimes_T Q)),$$ whose vanishing is equivalent to the existence of a deformation of $f$ over $T'$.
\end{prop}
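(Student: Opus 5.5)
The plan is to globalize the local obstruction of \Cref{L: local_def}, in the same way \Cref{P: deformation} globalizes its part two. Since $f$ is finite and $X_T \to T$ is flat, the problem reduces to the flat $R'$-subalgebras of $f'_\ast \cO_{X_{T'}}$. Here $X_{T'} \to T'$ is the flat extension of $X_T$, and it is affine over $Y$ via the topological identification. The proof of \Cref{L: local_def} works verbatim sheaf-theoretically on $Y$ (étale locally, so $Y$ may be an algebraic space).

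First I build the sheaf version of $E$. Let $\cA' := f_\ast \cO_{X_{T'}}$, regarded as a sheaf of $\cO_{T'}$-algebras on the small étale site of $Y$. There is a surjection $\pi : \cA' \to \cA := f_\ast \cO_{X_T}$, whose kernel is $J \otimes_T \cA$ by flatness of $X$. Set $\cE := \pi^{-1}(\cO_Y)$. Flatness of $Q$ gives $J \otimes_T \cO_Y \subset J\otimes_T \cA$. Exactly as in the lemma, $J \otimes_T \cO_Y$ is an ideal of $\cE$, and we obtain a square-zero extension of sheaves of $\cO_{T'}$-algebras
\[
0 \to J \otimes_T Q \to \cE/(J \otimes_T \cO_Y) \to \cO_Y \to 0 .
\]
Its formation commutes with étale localization on $Y$. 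This is the one point that needs care, and it holds because all sheaves involved are quasi-coherent and pulled back along étale maps.

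Next I identify this extension with a class. By \cite[Tag 08UX]{stacks-project} in its global (ringed topos) form, together with the comparison for algebraic spaces, square-zero extensions of $\cO_Y$ by the quasi-coherent module $J\otimes_T Q$ over the fixed thickening $\cO_{T'} \to \cO_T$ are classified up to the action of $\Ext^0$. More precisely, the obstruction to the existence of such an extension which is moreover $\cO_{T'}$-compatible is an element of $\Ext^1_{\cO_Y}(L_{Y/T}, J\otimes_T Q)$. I then take $\delta$ to be the class of the extension above, more precisely its difference from the trivial extension of $\cO_Y$ over $T'$. The vanishing of $\delta$ is equivalent to the existence of a global $\cO_{T'}$-algebra splitting $s : \cO_Y \to \cE/(J\otimes_T\cO_Y)$.

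Finally I show that splittings correspond to deformations. Given a splitting $s$, set $\cO_{Y'} := p^{-1}(s(\cO_Y)) \subset \cA'$. The local argument of \Cref{L: local_def} (local flatness criterion, Nakayama, Artin–Tate after limit preservation) shows that this defines a flat crimping $X_{T'} \to Y' = \Spec_Y \cO_{Y'}$ of corank $d$. The space $Y'$ is an algebraic space because it has the same étale site as $Y$ with a square-zero thickened structure sheaf. Conversely, any deformation $f'$ gives $\cO_{Y'} \subset \cE$ and hence a splitting. Thus $\delta = 0$ if and only if a deformation exists, and canonicity follows from the functorial construction of $\cE$.

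The main obstacle is the passage from local to global. Locally each affine piece has an obstruction $\alpha$, but local sections need not glue. The point of using the global class in $\Ext^1_{\cO_Y}(L_{Y/T},-)$, rather than gluing the local $\alpha$'s, is to handle this. I would therefore be careful to invoke the topos-theoretic classification of extensions of sheaves of algebras, rather than only the affine statement.
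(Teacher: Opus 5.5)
Your proposal is correct, but it is organized differently from the paper's proof.

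\textbf{Your route.} You build the extension $0\to J\otimes_T Q\to \cE/(J\otimes_T\cO_Y)\to\cO_Y\to 0$ once, globally on the \'etale site of $Y$. You take $\delta$ to be its class under the ringed-topos identification of $\exal$ with $\Ext^1(L,-)$. Then $\delta=0$ iff a global algebra splitting exists iff a deformation exists, all in one step.

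\textbf{The paper's route.} It uses only the affine statement of \Cref{L: local_def}. It glues the local classes $\alpha_{U_i}$ into a section of $\mathcal{E}xt^1_{\cO_Y}(L_{Y/T},J\otimes_TQ)$ and identifies $H^0(Y,\mathcal{E}xt^1)$ with $\Ext^1$ via the local-to-global spectral sequence. It then disposes of a secondary \v{C}ech obstruction in $H^1(Y,\mathcal{E}xt^0)$. Both steps use that these sheaves are supported on $\Spec_Y(\cO_Y/\Ann(Q))$, which is affine because $T$ is.

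\textbf{Comparison.} Your global class encodes both layers at once. Its image in $H^0(\mathcal{E}xt^1)$ is the paper's $\alpha$, and when that vanishes it comes from the \v{C}ech class in $H^1(\mathcal{E}xt^0)$. So you need neither the support argument nor affineness of $T$ at this step. The price is constructing $\cE$ sheaf-theoretically on the \'etale site, and invoking the fact that a square-zero thickening of $\cO_Y$ by a quasi-coherent ideal is again an algebraic space.

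\textbf{Two points to tighten.}

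First, the extension is more than one of $\cO_{T'}$-algebras. For a local section $e$ of $\cE$ and $j\in J$ one has $je=j\otimes\pi(e)\in J\otimes_T\cO_Y$, so $J$ annihilates $\cE/(J\otimes_T\cO_Y)$. It is therefore an extension of $\cO_T$-algebras, with class in $\exal_{\cO_T}(\cO_Y,J\otimes_TQ)\cong\Ext^1_{\cO_Y}(L_{Y/T},J\otimes_TQ)$. Moreover, $\cO_{T'}$-algebra splittings coincide with $\cO_T$-algebra splittings. This is what places $\delta$ in $\Ext^1(L_{Y/T},-)$ rather than $\Ext^1(L_{Y/T'},-)$. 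It should replace your description of $\Ext^1$ as ``the obstruction to the existence of such an extension'': $\Ext^1$ classifies isomorphism classes of extensions, and $\delta$ is simply the class of this particular one.

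Second, $\cO_{Y'}$ is not an $\cO_Y$-algebra, so $\Spec_Y\cO_{Y'}$ is not meaningful. Rather, $Y'$ is the \'etale topos of $Y$ equipped with the structure sheaf $\cO_{Y'}$, as your following sentence indicates.
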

\begin{proof}
    For each affine open $U_i \subset Y$, the obstruction to the existence of a local extension is given by a canonical element  $ \alpha_{U_i} \in \Ext^1_{\mathcal{O}_{U_i}}(L_{U_i/T}, J \otimes_T Q|_{U_i})$. These glue together to give a global obstruction $\alpha \in H^0(Y, \mathcal{E}xt^1_{\mathcal{O}_Y} (L_{Y/T}, J \otimes_T Q))$, because $\mathcal{E}xt^1(L_{Y/T}, J \otimes_T Q)$ is equivalent to the sheafification of the presheaf that assigns $(U \subseteq Y) \mapsto \Ext^1_U(L_{Y/T}|_U, J \otimes_T Q)$. Note that the quasi-coherent sheaves $\mathcal{E}xt^q_{\mathcal{O}_Y} (L_{Y/T}, J \otimes_T Q)$ are scheme-theoretically supported on $\Spec_Y (\cO_Y/\Ann(Q))$. Since $\Spec_Y (\cO_Y/\Ann(Q))$ is finite over $T$, and $T$ is an affine scheme, $\Spec_Y (\cO_Y/\Ann(Q))$ is affine. Hence these sheaves have no higher cohomology.  The five-term exact sequence associated to the Ext spectral sequence $E_2^{p,q} = H^p(Y, \mathcal{E}xt^q_{\mathcal{O}_Y}(L_{Y/T}, J \otimes_T Q)) \Rightarrow \Ext^{p+q}_{\mathcal{O}_Y}(L_{Y/T}, J \otimes_T Q)$ gives an isomorphism $H^0(Y, \mathcal{E}xt^1_{\mathcal{O}_Y} (L_{Y/T}, J \otimes_T Q)) \cong \Ext^1_{\mathcal{O}_Y} (L_{Y/T}, J \otimes_T Q)$. We define the image of $\alpha$ under this isomorphism to be the first obstruction $\delta \in \Ext^1_{\mathcal{O}_Y} (L_{Y/T}, J \otimes_T Q)$.

    If this obstruction vanishes, for each $U_{ij} = U_i \cap U_j$, the local extensions of $f_{U_i}$ and $f_{U_j}$ restrict to two extensions of $f|_{U_{ij}}$. Their difference defines an element $d_{ij} \in \Ext^0_{\mathcal{O}_{U_{ij}}} (L_{U_{ij}/T}, J \otimes_{T}Q |_{U_{ij}})$. On every triple overlap $U_{ijk}= U_i \cap U_j \cap U_k$, these elements satisfy $d_{ij} + d_{jk}- d_{ik} =0$, and therefore define  a Čech 1-cocycle $\gamma \in H^1( Y, \mathcal{E}xt^0 _{\mathcal{O}_Y}  (L_{Y/T}, J \otimes_T Q))$. This is the second obstruction.   Once it vanishes, we can modify the local extensions so that they agree on overlaps. Again since the quasi-coherent sheaf $\mathcal{E}xt^0 _{\mathcal{O}_Y}  (L_{Y/T}, J \otimes_T Q)$ is supported on an affine closed subscheme,  $H^1( Y, \mathcal{E}xt^0 _{\mathcal{O}_Y}  (L_{Y/T}, J \otimes_T Q)) =0$. It follows that the second obstruction vanishes. This completes the proof.
\end{proof}
Given a flat family of crimpings $f: X_T \to Y$, the obstruction theory $(\mathrm{Ob}_f, \mathrm{ob}_f)$ at $f$ is defined as follows. 
Set 
\[
\mathrm{Ob}_f: \QCoh(T) \to \Ab, \quad J \mapsto \Ext^1_{\mathcal{O}_Y} (L_{Y/T}, J \otimes_T Q)
\]
as an additive functor of $\QCoh(T)$ to the category of abelian groups, defining the obstruction space. 
Because of the functoriality of the canonical element given by \Cref{P: obstruction}, the assignment 
\[
(T \hookrightarrow T') \mapsto \delta 
\]
defines a natural transformation $\mathrm{ob}_f: \exal_S (T, -) \Rightarrow \mathrm{Ob}(-)$. 

\begin{prop}\label{P: coherence_of_def_obs} Let $X \to S$ be flat. 
   $\crimp_S^d(X)$ admits a coherent deformation theory and a coherent obstruction theory at any flat family of crimpings $(f: X_T \to Y) \in \crimp_S^d(X)(T)$, given respectively by $\mathrm{Def}_f$, and $(\mathrm{Ob}_f, \mathrm{ob}_f)$.  
\end{prop}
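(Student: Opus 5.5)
We follow the framework of \cite{MR3589351}: a coherent deformation theory means that $\mathrm{Def}_f : \QCoh(T) \to \Ab$ is a linear functor that is coherent in Auslander's sense, i.e.\ it is the cokernel of a natural transformation $\Hom(\cF, -) \to \Hom(\cG,-)$ with $\cF,\cG$ coherent. The same condition on $\mathrm{Ob}_f$, together with the naturality of $\mathrm{ob}_f$, makes the obstruction theory coherent. By \Cref{P: deformation} and \Cref{P: obstruction}, it therefore suffices to show that for $i=0,1$ the functors $J \mapsto \Ext^i_{\cO_Y}(L_{Y/T}, J \otimes_T Q)$ are coherent functors on $\QCoh(T)$, and that $\mathrm{ob}_f$ is natural in $J$. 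Naturality of $\mathrm{ob}_f$ follows from the functoriality of the class in \cite[Tag 08UX]{stacks-project} under pushout of extensions. The local obstruction classes $\alpha_{U_i}$ are compatible with restriction and with maps $J \to J'$, so the glued class $\delta$ is natural as well.

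For coherence, the key point is that $Q$ is locally free of rank $d$ over $T$ and is scheme-theoretically supported on $Y_0 := \Spec_Y(\cO_Y/\Ann(Q))$, which is finite over the affine scheme $T$. Write $p : Y_0 \to T$. The functor $J \mapsto J\otimes_T Q$ is exact. Moreover, $\Ext^i_{\cO_Y}(L_{Y/T}, J\otimes_T Q) = \Ext^i_{\cO_{Y_0}}(L_{Y/T}|^{L}_{Y_0}, J \otimes_T Q)$. Pushing forward along the finite affine morphism $p$ and using the finite-rank local freeness of $Q$, we get adjunction isomorphisms
\[
\Ext^i_{\cO_Y}(L_{Y/T}, J\otimes_T Q) \cong \Ext^i_{\cO_T}\bigl(p_\ast\, R\mathcal{H}om_{\cO_{Y_0}}(\cdot)\bigr)\ \text{or more concretely}\ \Ext^i_{\cO_T}(M, J)
\]
for a suitable complex $M$ of $\cO_T$-modules built from $L_{Y/T}\otimes^L Q^\vee$. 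Here $Q^\vee = \mathcal{H}om_{\cO_T}(Q,\cO_T)$ is regarded as an $\cO_Y$-module; it is coherent on $Y$ because $Q$ is finite locally free over $T$. Concretely, $\Ext^i_{\cO_Y}(L, J\otimes_T Q) \cong \Ext^i_{\cO_Y}(L\otimes^L_{\cO_Y} Q^\vee, p^!\text{-type } J)$. The cleanest route is to note that $\Hom_{\cO_Y}(N, J\otimes_T Q) \cong \Hom_{\cO_T}(\pi_\ast(N\otimes_{\cO_Y} Q^\vee), J)$ for $\cO_Y$-modules $N$, via the trace pairing.

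Rather than rely on the formula above, the argument I will actually run is the following. Since $Y \to T$ is of finite type, $T$ is Noetherian (locally of finite type over excellent $S$, affine), and $L_{Y/T}$ has coherent cohomology, which is bounded above. The obstruction and deformation modules are supported on $Y_0$, which is finite over $T$. I then apply the standard result \cite[Ex.~2.7, Thm.~D]{MR3589351}: for a finite (or proper) morphism $g : Y \to T$ with $T$ Noetherian, a complex $K \in D^-_{coh}(Y)$, and $F$ coherent on $Y$ and flat over $T$, the functors $J \mapsto \Ext^i_{\cO_Y}(K, F\otimes_T J)$ are coherent. We apply this with $K = L_{Y/T}$ and $F = f_\ast \cO_X / \cO_Y = Q$, which is $T$-flat with proper (finite) support. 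If needed, we first replace $Y$ by a proper ambient space, or localize near $Y_0$, since the Ext sheaves only see a neighborhood of $\supp Q$.

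The main obstacle is this last step. Hall's coherence theorem is stated for proper morphisms, whereas $Y \to T$ is not proper; only $\supp Q$ is finite over $T$. My first attempt is to reduce to the affine case: $Y_0$ is affine, so cover a neighborhood of $Y_0$ by an affine $U = \Spec B$, using the local-to-global spectral sequence argument already given in \Cref{P: obstruction}. Then $\Ext^i_B(L_{B/R}, J\otimes_R Q)$ can be computed from a resolution of $L_{B/R}$ by finite free $B$-modules $P_\bullet$. We have $\Hom_B(P_j, J\otimes_R Q) = (J\otimes_R Q)^{n_j} \cong \Hom_R((Q^\vee)^{n_j}, J)$, which is a coherent functor of $J$, corepresented by a finite locally free $R$-module. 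The Ext groups are then the cohomology of a complex of coherent functors, and coherent functors form an abelian subcategory closed under kernels and cokernels, which gives coherence. Finally, coherence glues along the affine cover because the relevant Ext sheaves are supported on the affine $Y_0$ and the higher cohomology vanishes.
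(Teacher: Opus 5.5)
\textbf{Comparison with the paper.} Your first plan is exactly the paper's proof: apply Hall's coherence theorem to $J\mapsto\Ext^i_{\cO_Y}(L_{Y/T},J\otimes_T Q)$, with $Q$ as the $T$-flat, properly supported argument. The ``main obstacle'' you then raise does not exist. The result the paper invokes, \cite[Theorem C]{MR3267585}, needs $Q$ to be $T$-flat with support proper over $T$; it does not need $Y\to T$ to be proper. The paper gets separatedness of $Y\to T$ from the finite surjection $X_T\to Y$. Since $Q$ is finite and flat over $T$, the direct application already finishes the proof.

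\textbf{The gap in your workaround.} The affine reduction you actually carry out fails at the step ``$Y_0$ is affine, so cover a neighborhood of $Y_0$ by an affine $U=\Spec B$.'' An affine closed subspace need not have an affine open neighborhood, and in general $Y$ is only an algebraic space. For example, let $X$ be a proper variety over a field with two points $p,q$ lying in no common affine open. Gluing $p$ and $q$ gives a corank-$1$ crimping $f\colon X\to Y$ whose $Y_0$ is the glued point $y$. Any affine open $U\ni y$ would give an affine open $f^{-1}(U)\supset\{p,q\}$, which is impossible.

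Passing to several affine or \'etale charts $U_i$ does not rescue the argument. On a chart not containing all of $Y_0$, $Q|_{U_i}$ is no longer finite over $R$. Then $J\mapsto (J\otimes_R Q|_{U_i})^{n}$ is not corepresented by a finite module and need not be coherent: coherent functors commute with products, while $J\mapsto J\otimes_R K$ does not, for $R$ a DVR with fraction field $K$. So closure of coherent functors under kernels says nothing about the glued functor.

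\textbf{How to repair it.} Use the adjunction you wrote in your second paragraph. Let $\iota\colon Y_0\hookrightarrow Y$. Since $J\otimes_T Q$ is killed by $\Ann(Q)$, you have $\Ext^i_{\cO_Y}(L_{Y/T},J\otimes_T Q)\cong\Ext^i_{\cO_{Y_0}}(L\iota^*L_{Y/T},J\otimes_T Q)$. Here $Y_0=\Spec D$ with $D$ finite over the noetherian ring $R$. Moreover $L\iota^*L_{Y/T}$ is pseudo-coherent, so it is represented by a bounded-above complex of finite free $D$-modules. Your computation with $(J\otimes_R Q)^{n_j}\cong\Hom_R((Q^\vee)^{n_j},J)$ then goes through verbatim. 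This gives a self-contained proof that avoids Hall's theorem, at the cost of spelling out the adjunction and the pseudo-coherence of the cotangent complex.
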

\begin{proof}
    It suffices to show that the functors $\mathrm{Def}_f, \mathrm{Ob}_f: \QCoh(T) \to \Ab$ are coherent. Since $Y$ admits a finite surjective morphism from a scheme $X_T$, and $X_T$ is separated over $T$ , it follows that $Y$ is separated over $T$. Moreover, $Q$ is properly supported and flat over $T$. Therefore, by  \cite[Theorem C]{MR3267585}, both functors are coherent.  
\end{proof}

\subsection{The main representability theorem} \label{SS: main_thm}





The proof of \Cref{T:main_algebraicity} will use Artin's criteria, as will be explained after the following preliminary lemmas. Assume that $S$ is an excellent scheme, and $X \to S$ is separated and of finite presentation.

\begin{lem}(Effectivity)
\label{L: effectivity}
For a complete local Noetherian $S$-algebra $(C,\mathfrak{m})$, the restriction map is a bijection
\[ 
\mathcal{G}: \crimp_S^d(X) (C) \xrightarrow{\cong} \varprojlim_{n} \crimp_S^d (X) ( C/\mathfrak{m}^n). 
\]
\end{lem}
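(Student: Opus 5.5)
Write $T_n = \Spec C/\mathfrak m^n$ and $\hat T = \Spec C$. Injectivity of $\mathcal G$ and surjectivity will both be proved by reducing to the finite part of the data, namely the conductor. The pushout description and Proposition~\ref{prop: pushout} show that a flat crimping over any base is recovered from the diagram $X_T \hookleftarrow \Cond(f)\to \Spec_Y(\cO_Y/\Ann(Q))$. These objects are finite over $T$, but not necessarily flat. To get flatness I will pass through $\cover(X)_{n,d}$ using a uniform choice of thickened closed subscheme.

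\textbf{Step 1 (uniform finite data).} Given a compatible system $f_n : X_{T_n}\to Y_n$, let $Q_n$ be the quotient sheaf. Each $Q_n$ is locally free of rank $d$ over $C/\mathfrak m^n$, and the $Q_n$ are compatible. Its support is finite over $T_n$, hence lies over the closed fiber $f_1$. By Proposition~\ref{P: boundness}, the conductor of $f_1$ has colength at most $d(d+1)$.

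I would define $Z_n \subset X_{T_n}$ as the closed subscheme cut out by $\mathfrak m_x^{N}$-type ideals around the finitely many conductor points. More precisely, I would use a fixed closed subscheme $Z\subset X_{\hat T}$, finite and flat over $C$, chosen as an infinitesimal neighborhood of $\Cond(f_1)$ in $X_{\hat T}$ relative to $\hat T$. The requirement is that $Z$ contains $\Cond(f_n)$ for every $n$. For this, note that $\Ann(Q_n)$ contains the kernel of $\cO\to \mathrm{End}_{C/\mathfrak m^n}(Q_n)$. Cayley–Hamilton for the rank-$d$ module $Q_n$ then shows that for each local function $g$ vanishing at the conductor points, some power $g^{d}$ (times a function of $C$) acts nilpotently in a way bounded independently of $n$. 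That gives a uniform $N$ with $I^N\subset \Ann(Q_n)+\mathfrak m^n$, where $I$ is the ideal of the reduced conductor points. I expect this uniform bound to be the \textbf{main obstacle}. If it fails, I would instead invoke Lemma~\ref{L:locality_of_crimpings} to replace $X$ by the henselization, and then by the completion, at the finitely many conductor points. There the problem becomes one about finite-colength subalgebras $B_n\subset A\otimes C/\mathfrak m^n$ with $A\otimes C/\mathfrak m^n / B_n$ free of rank $d$. In that setting, $B_n \supset \mathfrak m_A^{N}$ for $N$ depending only on $d$ (since $\mathfrak m_A$ acts nilpotently on a length-$d$ module over the residue field, and then by Nakayama over $C$).

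\textbf{Step 2 (algebraize).} With $Z$ fixed and finite flat over $C$, each $f_n$ is the pushout along $Z_n:=Z\times T_n$ of the sub-$C/\mathfrak m^n$-algebra $W_n:=\cO_{Y_n}/(\text{ideal of } Z_n\text{ in } \cO_{Y_n})\subset \cO_{Z_n}$, whose cokernel is $Q_n$. This gives a compatible system of $T_n$-points of $\cover(X)_{n',d}$ lying over the fixed point $Z$ of $\Hilb$. By Lemma~\ref{lem: representability_of_H}, this space is a closed subspace of a Grassmannian of $\cO_Z$, which is projective over $C$, so Grothendieck existence algebraizes the system to a $\hat T$-point $(Z\hookrightarrow X_{\hat T}, Z\to W)$. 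Concretely, $\cO_W = \varprojlim \cO_{W_n}$ is a $C$-subalgebra of $\cO_Z$ with locally free quotient $\varprojlim Q_n$. Proposition~\ref{prop: pushout} then produces a flat crimping $X_{\hat T}\to X_{\hat T}\cup_Z W$. Since the pushout is compatible with base change, its restriction to each $T_n$ is $f_n$, which proves surjectivity.

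\textbf{Step 3 (injectivity).} Let $f,f'$ be two crimpings over $\hat T$ with the same restrictions. Choose $Z$ large enough, as in Step 1, to contain both conductors. Then $f$ and $f'$ correspond to two $C$-subalgebras of $\cO_Z$ with locally free quotients, that is, two $\hat T$-points of the Grassmannian that agree modulo every $\mathfrak m^n$. Separatedness of the Grassmannian, together with Krull's intersection theorem applied to $\cO_Z$ (a finite free $C$-module), shows that these two points are equal. Hence $f=f'$.
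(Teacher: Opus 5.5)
Your proposal has a genuine gap in Step 1, and Steps 2 and 3 both depend on it. The uniform bound you need is false whenever $d>0$ and $C$ is not Artinian.

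Read literally, the ideal $I$ of the reduced conductor points in $X_{\hat T}$ contains $\mathfrak m\cO_{X_{\hat T}}$. So $I^N\subset\Ann(Q_n)+\mathfrak m^n$ would force $\mathfrak m^N Q_n=0$, which contradicts $Q_n\cong(C/\mathfrak m^n)^{d}$ for $n>N$.

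Read relatively (powers of the ideal of a section, or of relative coordinates through a conductor point), it fails because the conductor can move and split in the generic fiber. Take $C=k[[u]]$, $X=\Spec C[t]$ and $\cO_Y=C+(t^2-u)C[t]$. This is a flat crimping of corank $1$, with conductor $V(t^2-u)$. On the closed fiber it is a cusp; on the generic fiber it is geometrically a node identifying $t=\pm\sqrt u$. Here $t^2\in\cO_Y$ acts on $Q_n=(C/u^n)\,\bar t$ as multiplication by $u$. Hence no $t^M$ with $M<2n$ lies in $\Ann(Q_n)$, and no infinitesimal neighborhood of a single section contains $V(t^2-u)$.

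Your Cayley--Hamilton argument only gives a monic relation $\chi_g(g)\in\Ann(Q_n)$ with non-leading coefficients in $\mathfrak m$ (here $t^2-u$). That is not nilpotence of bounded order. Your fallback $B_n\supset\mathfrak m_A^N$ fails on the same example. Also, $\mathfrak m_A$ does not act on $A_n/B_n$; only $B_n$ does. Finally, when $X$ is merely flat over $S$, a conductor point need not lie on a section, and neighborhoods of sections need not be flat. So producing any finite flat $Z\supset\Cond(f_n)$ compatible in $n$ requires a real construction, and the proposal does not supply one.

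The missing idea is that you need neither a $Z$ fixed in advance nor a flat one: Proposition~\ref{prop: pushout} only requires the quotient $Q$ to be flat. The paper proceeds as follows.
\begin{enumerate}
\item It takes $W_n\subset Y_n$ cut out by $\Fitt_0(Q_n)$ and sets $Z_n:=W_n\times_{Y_n}X_{C_n}$. Since Fitting ideals commute with base change, these automatically form compatible systems of finite $C_n$-schemes, with no bound required.
\item It algebraizes $\cO_{Z_n}$, $\cO_{W_n}$ and $Q_n$ via $\Mod^{fp}(C)\simeq\varprojlim\Mod^{fp}(C_n)$, with exactness of the limit coming from Mittag-Leffler.
\item It shows by Nakayama that $\varprojlim Q_n$ is free of rank $d$ and that $Z\to X_C$ is a closed immersion, and then pushes out.
\end{enumerate}
Injectivity holds because the same Fitting-ideal construction recovers $Z$ and $W$ from the restrictions. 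Alternatively, by Lemma~\ref{L: separatedness}, the locus in $\Spec C$ where two crimpings agree is a closed subscheme containing every $\Spec C/\mathfrak m^n$, hence all of $\Spec C$ by Krull's intersection theorem.

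Your Step 2 (Grassmannian plus Grothendieck existence) would go through once a suitable finite flat $Z$ exists, but its existence is exactly the unproved point.
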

\begin{proof}
We denote $C_n:= C/\mathfrak{m}^n, X_{C} := X \times_S \Spec C$, and $X_{C_n}:= X \times_S \Spec C_n$. An element of $\varprojlim_{n} \crimp_S^d(X) ( C/\mathfrak{m}^n)$ is given by a compatible system of flat families of crimpings $\{(f_n: X_{C_n} \to Y_n)\}_{n\geq 1}$, and for each flat family of crimpings $f_n: X_{C_n} \to Y_n$ over $C_n$, we have an exact sequence of $\mathcal{O}_{Y_n}$-modules: $0 \to \mathcal{O}_{Y_n} \to f_{n,*} \mathcal{O}_{X_{C_n}} \to Q_n \to 0.$ Let $W_n \hookrightarrow Y_n$ be the closed subspace cut out by the fitting ideal $\Fitt_0 (Q_n)$, and let $Z_n: = W_n \times_{Y_n} X_{C_n}$, which are both finite over $C_n$ by definition. Moreover, the short exact sequence above reduces to a short exact sequence on $W_n$:
\begin{equation}\label{equ: ses_on_fitting_ideal}
0 \to \mathcal{O}_{W_n} \to f_{n,*} \mathcal{O}_{Z_n} \to Q_n \to 0.    
\end{equation}
Since the system $\{Y_n\}$ is compatible, and fitting ideals commute with base change, $\{W_n\}$ are compatible: $W_n \times_{C_n} \Spec{C_m} \cong W_m$ for all $n \geq 1, m \leq n$. Hence the same is true for $\{Z_n\}$: $Z_n \times_{C_m} \Spec{C_m} \cong Z_m$, for all $n \geq 1, m \leq n$. 
Define $Q: = \varprojlim_n Q_n$, $\mathcal{O}_{Z}: = \varprojlim_n \mathcal{O}_{Z_n}$, and $\mathcal{O}_W :=  \varprojlim_n \mathcal{O}_{W_n}$ Then by the following equivalence of categories 
\cite[\href{https://stacks.math.columbia.edu/tag/087W}{Tag 087W}]{stacks-project}:
    \begin{equation} \label{equ: G_existence_thm_point}
    \begin{array}{rl}\Mod^{fp}(C) \rightarrow& \varprojlim_{n} \Mod^{fp}(C_n)\\
    M \mapsto& \{M \otimes_C C_n\}_{n \geq 1},
    \end{array}
    \end{equation}
we know that $Q$ is a finitely presented $C$-module, and both $\mathcal{O}_Z$ and $\mathcal{O}_W$ are finite $C$-algebras. Now since the inverse system $\{ \mathcal{O}_{W_n}\}$ satisfies the Mittag-Leffler condition, the exactness of \eqref{equ: ses_on_fitting_ideal} is preserved under taking inverse limits:
\begin{equation}
    0\to \mathcal{O}_W \to \mathcal{O}_Z \to Q \to 0.
\end{equation}
Notice that $Q \otimes_C C_n \cong Q_n$. Choose a basis $\bar{q}_1,...,\bar{q}_d$ of $Q_1$ as a $C_1$-vector space, and lift it to elements $q_1,..,q_d$ of $Q$. This defines a homomorphism $C^{\oplus d} \to Q$. 
After tensoring with $C_1$,
the induced map  $(C_n)^{\oplus d} \to Q_n$ becomes the isomorphism $(C_1)^{\oplus d} \to Q_1, e_i \mapsto \bar{q}_i$. Hence by Nakayama's lemma, each $(C_n)^{\oplus d} \to Q_n$ is a surjective map between modules of free of rank $d$ over $C_n$, and therefore  an isomorphism. It follows from the equivalence \eqref{equ: G_existence_thm_point} that $C^d \to Q$ is an isomorphism. So $Q$ is finite flat of rank $d$ over $C$.
Therefore, $Z \to W$ is a family of crimpings  over $C$. 
Now we claim the natural map $Z \hookrightarrow X_C$ is a closed immersion. Indeed, since $Z$ is finite over $C$ and $X_C$ is separated over $C$, the map $Z \to X_C$ is finite. Take an affine open $U: =\Spec R \subset X_C$, and set $V:= Z \times_{X_C} U$, which is again affine. After modulo $\mathfrak{m}$, the finite map $V \to U$ reduces to $V_1 = Z_1 \times_{X_{C_1}} U_1 \to U_1= \Spec R/\mathfrak{m} R$, which is an affine closed immersion. Therefore, by Nakayama's lemma, $V \to U$ is a closed immersion. This holds for every open affine $U \subset X_C$, and hence $Z \to X_C$ is a closed immersion. 
Define $Y$ as the pushout \[
\begin{tikzcd}
Z \ar[r] \ar[d, hook] & W \ar[d] \\ X_C \ar[r] & Y.
\end{tikzcd}
\]
Since $Z\to W$ is a family of crimpings over $C$,  and $Z \hookrightarrow X_C$ is a closed immersion, the map $f: X_C \to Y$ is a flat family of crimpings over $C$ by \Cref{prop: pushout}.

We are ready to define an inverse $\mathcal{F}$ to the restriction map $\mathcal{G}$: $\mathcal{F}$ assigns the family $f: X_C \to Y$ over $C$ constructed above to a compatible system  $\{(f_n: X_{C_n} \to Y_n)\}_{n\geq 1}$. From the construction, $f$ recovers $f_n$ after modulo $\mathfrak{m}^n$, and therefore, $\mathcal{G} \circ \mathcal{F} = \mathrm{id}$.  Conversely, starting with a flat family of crimpings $f: X_C \to Y$ over $C$, we define $W \hookrightarrow Y$ to be the closed subspace defined by $\Fitt_0 (Q)$, and define $Z:= W \times_Y X_C$. Restricting to $C_n$, we get a compatible system of flat families of crimpings $\{(f_n: X_{C_n} \to Y_n)\}_{n\geq 1}$.
Because fitting ideals commutes with base change, 
the two families of closed subspaces $\{Z_n \subset X_n\}$ and  $\{W_n \subset Y_n\}$ can be identified as $\{Z \times_C C_n\}$ and 
 and $\{ W \times_C C_n\}$. It follows from the equivalence $\eqref{equ: G_existence_thm_point}$ that the construction of $\mathcal{F}$ applied to $\{f_n\}$ recovers $Z$ and $W$. Finally as a pushout $Y = X_C \cup_Z W$, we recover the family $f: X_C \to Y$. 
 Therefore, $\mathcal{F} \circ \mathcal{G} = \mathrm{id}$. This shows bijectivity of $\mathcal{G}$.

\end{proof}

\begin{lem}[Strong homogeneity] 
\label{L: strong_homogeneity}
 Let $\Spec R \hookrightarrow \Spec R'$ be a nilpotent thickening of affine schemes over $S$, and let $\Spec R \to \Spec C$ be any morphism of affine schemes over $S$. Then the natural map 
\[
\mathcal{G}: \crimp^d_S (X)(R' \times_R C) \to 
\crimp^d_S (X) (R') \times_{\crimp^d_S (X) (R)} \crimp^d_S (X) (C)
\]
is a bijection. 
\end{lem}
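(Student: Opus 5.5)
We write $D := R'\times_R C$. The ring maps $D \to C$ and $D \to R'$ make $\Spec D$ the pushout of $\Spec R' \hookleftarrow \Spec R \to \Spec C$. The map $D\to C$ is surjective with nilpotent kernel $\ker(R'\to R)$, so $\Spec C \hookrightarrow \Spec D$ is again a nilpotent thickening.

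The plan is to reduce to a statement about flat modules and algebras over fiber products of rings, and to use the affine pushout description of crimpings. Since crimpings are local on $Y$ and the statement is Zariski local on $X$, I would first treat the affine case. Suppose $X=\Spec A_0$, and write $A_D := A_0\otimes D$, and similarly $A_{R'}$, $A_C$, $A_R$. Flatness of $A_0$ over $S$ gives $A_D \cong A_{R'}\times_{A_R} A_C$. By the remark after the pushout square, $Y$ is affine whenever $X$ is affine over the base. So an element of $\crimp^d_S(X)(D)$ is a $D$-subalgebra $B_D\subset A_D$ that is $D$-flat, of finite presentation, and has $A_D/B_D$ locally free of rank $d$; the other three sets are described the same way.

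\emph{Injectivity.} Given $B_D$, I would show $B_D = (B_D\otimes_D R')\times_{B_D\otimes_D R}(B_D\otimes_D C)$. This is the standard fact that for a flat $D$-module $M$ one has $M\cong (M\otimes R')\times_{M\otimes R}(M\otimes C)$. To see it, tensor the exact sequence $0\to D\to R'\times C\to R$ with $M$, using flatness. Thus $B_D$ is determined by its images, which are exactly the sub-algebras giving its restrictions $B_{R'}$ and $B_C$, viewed inside $A_D$.

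\emph{Surjectivity.} Given compatible $B_{R'}\subset A_{R'}$ and $B_C\subset A_C$ with common restriction $B_R\subset A_R$, set $B_D := B_{R'}\times_{B_R} B_C\subset A_D$. I would then check four things in turn. First, $B_D$ is a subalgebra, which is clear. Second, $B_D$ is $D$-flat and restricts to $B_{R'}$ and $B_C$; this is Milnor patching for flat modules over $R'\times_R C$ when $R'\to R$ is a nilpotent thickening, as in \cite[Tag 08KG]{stacks-project} and related results. Third, the quotient $Q_D := A_D/B_D \cong Q_{R'}\times_{Q_R} Q_C$ is finite locally free of rank $d$, by the same patching for finite projective modules, after a snake-lemma argument comparing the two fiber-product sequences. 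Fourth, $B_D$ is of finite presentation over $D$; I would argue this by Stacks' result that finite presentation descends along such pushouts of algebras \cite[Tag 0D2G]{stacks-project}. Alternatively, note that $A_D$ is finite over $B_D$ (lift generators as in \Cref{prop: pushout}) and apply limit arguments. The fourth point, finite presentation, is the step I expect to be the main obstacle, since $D$ need not be Noetherian. The cleanest route seems to be: $B_D$ is flat, and its base changes to $R'$ and $C$ are of finite presentation, and flat modules/algebras over a Milnor square with these properties are of finite presentation by the Stacks project results on pushouts along thickenings.

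Finally I would globalize. For general $X$ (separated, flat, of finite presentation), $X_D$ is the pushout $X_{R'}\cup_{X_R} X_C$ as a space. Given compatible crimpings, I would define $Y_D := Y_{R'}\cup_{Y_R} Y_C$ using the existence of pushouts along nilpotent thickenings of affine maps (\cite{MR4699880}), which is compatible with étale localization. The affine computation then shows that, étale locally on $Y$, the pushout is exactly the construction above. Hence $X_D\to Y_D$ is a flat family of crimpings of corank $d$, with the finiteness of the support of $Q_D$ inherited from $Q_C$ because the underlying topological spaces agree. Uniqueness follows from the affine case by gluing.
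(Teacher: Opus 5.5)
Your argument is correct in outline, but it is organized differently from the paper's.

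\textbf{How the paper proceeds.} The paper never patches $\mathcal{O}_Y$ directly. It cuts each $Y_i$ down to the closed subspace $W_i$ defined by $\Fitt_0(Q_i)$, sets $Z_i := W_i\times_{Y_i} X$, and observes that these are finite over affine bases, hence affine. The gluings $Z = Z_1\cup_{Z_0}Z_2$ and $W = W_1\cup_{W_0}W_2$ are then fibre products of finite algebras. From there:
\begin{itemize}
\item $Q\cong Q_1\times_{Q_0}Q_2$ follows from the snake lemma;
\item $Z\to X_{C'}$ is a closed immersion by Nakayama;
\item $Y$ is produced as $X_{C'}\cup_Z W$, with flatness and finite presentation supplied by \Cref{prop: pushout};
\item injectivity follows because Fitting ideals commute with base change, so $Z$, $W$ and hence $Y$ are recovered.
\end{itemize}

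\textbf{How your route differs.} You patch the whole structure sheaf of $Y$, using Milnor patching for flat and for finite projective modules, and $M\cong M_{R'}\times_{M_R}M_C$ for $D$-flat $M$. You then globalize through the pushout $Y_{R'}\amalg_{Y_R}Y_C$.

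\textbf{What each buys.} The paper's reduction keeps everything on schemes finite over an affine base. So no \'etale-local gluing on a non-affine $Y$ is needed, and the flatness/presentation bookkeeping is delegated to the pushout proposition. The price is that one must know the Fitting subschemes commute with base change and that each $Z_i\to W_i$ is still a crimping with quotient $Q_i$. Your route is the standard Rim--Schlessinger-type verification and avoids conductor-type subschemes entirely. Its price is the \'etale-local description of $Y_{R'}\amalg_{Y_R}Y_C$.

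\textbf{Two small corrections.}

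First, the problem is not Zariski local on $X$: a crimping can identify points of $X$ lying in different affine opens. The affine step must therefore be run on an affine \'etale chart $V$ of $Y$ and its (affine) preimage in $X_D$. There the upstairs ring is $A_{R'}\times_{A_R}A_C$ built from the preimages, rather than $A_0\otimes D$. Your argument goes through verbatim with that ring.

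Second, finite presentation, which you single out as the main obstacle, is elementary given your own opening observation. The map $D\to C$ is surjective with nilpotent kernel $J$, and $B_D$ is $D$-flat with $B_D\otimes_D C\cong B_C$ finitely presented.
\begin{itemize}
\item Lift a presentation $B_C = C[x_1,\dots,x_n]/(g_1,\dots,g_m)$ to a map $D[x_1,\dots,x_n]\to B_D$. It is surjective by Nakayama for the nilpotent ideal $J$.
\item By flatness, its kernel $K$ satisfies $K/JK\cong(g_1,\dots,g_m)$.
\item The same Nakayama argument then shows $K$ is generated by lifts of the $g_j$.
\end{itemize}
No Noetherian hypothesis or limit argument is needed.
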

\begin{proof}
We denote $C' := R' \times_R C$. An element in the right-hand side is the following data:
a flat family of crimpings $f_1: X_{R'} \to Y_1$  over $R'$, and a flat family of crimpings $f_2: X_C \to Y_2$  over $C$ so that after base change to $R$, $f_1$ and $f_2$ are identified as flat families of crimpings over $R$, which we call $f_0: X_R \to Y_0$. Define $W_i \subset Y_i$ to be the closed subspace defined by the fitting ideal sheaf $\Fitt_0(Q_i)$, and define $Z_0 := W_0 \times_{Y_0} X_R \subset X_R$, $Z_1 :=  W_1 \times_{Y_1} X_{R'} \subset X_{R'}$, and $ Z_2:= W_2 \times_{Y_2} X_C \subset X_C$. Now for every $i$, $f_i: Z_i \to W_i$ is a family of crimpings, and we have maps $Z_1 \hookleftarrow Z_0 \to Z_2$, and $W_1 \hookleftarrow W_0 \to W_2$, where $Z_0 \hookrightarrow Z_1$, $W_0 \hookrightarrow W_1$ are both nilpotent thickenings given that $\Spec R \hookrightarrow \Spec R'$ is a nilpotent thickening. We define $Z \to C'$ and $W \to C'$ as the pushout of the following diagrams respectively, 
\[
\begin{tikzcd}
Z_0 \ar[r, hook] \ar[d] & Z_1 \ar[d] \\ Z_2 \ar[r] & Z,
\end{tikzcd}
\quad \quad \quad \quad
\begin{tikzcd}
W_0 \ar[r, hook] \ar[d] & W_1 \ar[d] \\ W_2 \ar[r] & W.
\end{tikzcd}
\]
Since $Z_i$ and $W_i$ are finite over affine schemes, they are affine. Hence such pushouts exist and are computed as fiber products of rings.
Since $Z \times_{C'} \Spec C \cong Z_2$, and $\Spec C \to \Spec C'$ is a nilpotent thickening, by Nakayama's lemma, $Z$ is finite over $C'$. Similarly, $W$ is finite over $C'$.  It follows from the pushout construction that we have a natural map $g: Z \to W $, and  the following commutative diagram:
\[
\begin{tikzcd}
        0\arrow{r}{} & \mathcal{O}_W \arrow{r}{} \arrow[d,dashed]& 
        \mathcal{O}_{W_1} \oplus \mathcal{O}_{W_2}
        \arrow{r}{}\arrow{d} & \mathcal{O}_{W_0}\arrow{d} \arrow{r}{} & 0\\
        0 \arrow{r}{} & \mathcal{O}_Z \arrow{r}{}& \mathcal{O}_{Z_1} \oplus \mathcal{O}_{Z_2} \arrow{r}{} & \mathcal{O}_{Z_0} \arrow{r}{}  &  0.
\end{tikzcd}
\]
Since the vertical map in the middle is injective, the dashed map is also injective. Moreover, by the snake lemma, the cokernel $Q$ of $\mathcal{O}_W \hookrightarrow  \mathcal{O}_Z $ may be identified with the fiber product $Q_1 \times_{Q_0} Q_2$. Because $Q_1$ (resp. $Q_2$) is locally free of rank $d$ over $R'$ (resp. $C$), $Q$ is locally free of rank $d$ over $C'$. So $Z \to W$ is a family of crimpings over $C'$.

Note that the map $Z \to X_{C'}$ is a closed immersion. In fact, $Z \to X_{C'}$ is finite because $Z$ is finite, and $X_{C'}$ is separated over $C'$. Restricting to $C$, $Z_2 \to X_{C}$ is a closed immersion. Then Nakayama's lemma applied affine-locally shows that $Z \to X_{C'}$ is a closed immersion. 
So we may define  $\mathcal{F}: \crimp^d_S (X) (R') \times_{\crimp^d_S (X) (R)} \crimp^d_S (X) (C) \to  \crimp^d_S (X)(R' \times_R C)$, which sends a compatible  pair $f_1: X_{R'} \to Y_1, f_2: X_C \to Y_2$ to the  family $f: X_{C'} \to Y$ obtained from the pushout.  
\[
\begin{tikzcd}
Z \ar[r,"g"] \ar[d] & W \ar[d] \\ X_{C'} \ar[r, "f"] & Y.    
\end{tikzcd}
\]
By  \Cref{prop: pushout}, 
$f: X_{C'} \to Y$ is a flat family of crimpings over $C'$.
Then because $Y \times_{C'} \Spec R' \cong Y_1$, and $Y \times_{C'} \Spec C \cong Y_2$, we know that $\mathcal{G} \circ \mathcal{F} = \mathrm{id}$.

Suppose that $f: X_{C'} \to Y$ is a flat family of crimpings over $C'$. We define $W$ as the closed subspace of $Y$ cut out by  $\Fitt_0(Q)$, and $Z:= W \times_Y X_{C'}$. Now after base change to $R'$ and $C$, we get a pair of compatible flat families of crimpings $f_1: X_{R'} \to Y_1 \cong Y \times_{C'} \Spec R'$ and $f_2: X_{C} \to Y_2 \cong Y \times_{C'} \Spec C$.
Since fitting ideals commute with base change, $W_i \subset Y_i$ defined by $\Fitt_0(Q_i)$ coincides with the base change of $W$, and similarly  $Z_i$ is the base change of $Z$. Then the pushouts used in the construction of $\mathcal{F}$ recover $Z$ from $Z_i$, $W$ from $W_i$, and finally the original family $f: X_{C'} \to Y$. Thus $\mathcal{F} \circ \mathcal{G} =\mathrm{id}$. So the natural map $\mathcal{G}$ is a bijection.

\end{proof}

\begin{lem}[Diagonal]
\label{L: separatedness} Suppose that $S$ is a quasi-compact algebraic space, and $X \to S$ is of finite presentation.
    The diagonal morphism $\Delta: \crimp_S^d(X) \times \crimp_S^d(X)$ is a closed immersion. Therefore, $\crimp_S^d(X)$ is separated over $S$. 
\end{lem}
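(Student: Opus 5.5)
We need to show: given $T$ and two flat families of crimpings $f_1 : X_T \to Y_1$ and $f_2 : X_T \to Y_2$, the locus of $T' \to T$ over which they become isomorphic (compatibly with the maps from $X_{T'}$) is represented by a closed subscheme of $T$. Two crimpings of the same $X_T$ are isomorphic under $X_T$ if and only if the subsheaves $\cO_{Y_1}, \cO_{Y_2} \subset f_\ast\cO_{X_T}$ coincide. Such an isomorphism is unique if it exists, since $\cO_{Y_i}\to f_{i,\ast}\cO_{X_T}$ is injective. So the question becomes when two subalgebras agree. By \Cref{lem: limit preservation} and an étale cover $U \to S$, we may assume $T = \Spec R$ is affine; the closedness statement is local on $T$ and descends.

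\textbf{Step 1: reduce to a common finite-dimensional problem.} Let $W_i \hookrightarrow Y_i$ be cut out by $\Fitt_0(Q_i)$ and $Z_i := W_i\times_{Y_i} X_T$. These are finite over $T$, and $f_i$ is the pushout of $Z_i \to W_i$ along $Z_i \hookrightarrow X_T$. Take the closed subscheme $Z := $ scheme-theoretic union of $Z_1$ and $Z_2$, i.e. the one defined by $I_1\cap I_2$. It is still finite over $T$ but need not be flat.

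To fix flatness, I would instead use the product ideal $I = \Fitt_0(Q_1)\cO_X \cdot \Fitt_0(Q_2)\cO_X$. Then apply flattening stratification: $\cO_X/I$ is a finite $T$-module, so after passing to the (locally closed) flattening stratification of $T$ it becomes locally free. A cleaner alternative avoids flattening altogether, and I would try it first. Each $\cO_{Y_i}$ contains $I_i := \Ann(Q_i)$. Then $\cO_{Y_1} = \cO_{Y_2}$ as subsheaves of $f_\ast\cO_{X_T}$ if and only if both inclusions $\cO_{Y_1}\subset \cO_{Y_2}$ and $\cO_{Y_2}\subset \cO_{Y_1}$ hold. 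The inclusion $\cO_{Y_1} \subset \cO_{Y_2}$ holds if and only if the composite $\phi:\cO_{Y_1} \to f_\ast\cO_{X_T} \to Q_2$ vanishes.

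\textbf{Step 2: the vanishing locus is closed.} The map $\phi$ factors through $\cO_{Y_1}/(I_1 I_2)$-type quotients, which live on a scheme finite over $T$. More concretely, $Q_2$ is locally free of rank $d$ over $T$. Choose finitely many elements generating the relevant finite $R$-module $\cO_{Y_1}/(\cO_{Y_1}\cap \Ann(Q_2)\Ann(Q_1))$ (finite because it embeds in $\cO_X/\Ann(Q_1)\Ann(Q_2)$, which is finite over $R$). Their images in $Q_2 \cong R^d$ (locally on $T$) have coordinates in $R$. For any $T'\to T$, flatness of $Q_2$ makes the base change of $\phi$ the corresponding map for the pulled-back families, using the base-change compatibility of crimpings. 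Hence $\phi_{T'} = 0$ if and only if all these coordinates vanish in $R'$. So the locus is the closed subscheme cut out by the ideal they generate. The same applies symmetrically, and the intersection of the two closed subschemes represents $\Delta$.

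\textbf{Main obstacle.} The delicate point is that the generating set must be chosen so that its images still generate after arbitrary base change $T'\to T$; otherwise the vanishing of $\phi_{T'}$ is not detected by finitely many coordinates. I would handle this as follows. First, $\phi$ kills $\Ann(Q_1)\Ann(Q_2)\cdot f_\ast\cO_X$: products from $\Ann(Q_2)\cdot f_\ast \cO_X$ lie in $\cO_{Y_2}$, and the relevant elements lie in $\cO_{Y_1}$. The subtle part is that the ideal must be computed with base-change-compatible ideals, so I would replace annihilators by Fitting ideals, which commute with base change. Then $\phi$ factors through $\cO_{Y_1}\otimes (\cO_X/\Fitt_0(Q_1)\Fitt_0(Q_2)\cO_X)$, a finitely generated $R$-module whose formation commutes with base change, and finitely many $R$-generators of it remain generators after any base change. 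Finally, since $X \to S$ is only of finite presentation, I would reduce to $T$ Noetherian by \Cref{lem: limit preservation} so that all these modules are finitely presented.
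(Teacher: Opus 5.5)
Your argument is correct and is essentially the paper's proof: both reduce to comparing two subalgebras of the finite $\cO_T$-algebra $p_\ast(\cO_{X_T}/\mathcal{J})$, for an ideal $\mathcal{J}$ contained in both conductors, and take the vanishing locus of the cross-composites into the locally free modules $Q_2$ and $Q_1$. The paper uses $\mathcal{J}=\Ann(Q_1)\cap\Ann(Q_2)$ (the union of the conductor subspaces), while you use a product of annihilators or of Fitting ideals; either works. Your flattening-stratification detour and your worry about generators are unnecessary, since generators of an $R$-module always generate its base change, and flatness of $Q_i$ already keeps $0\to\cO_{Y_i}/\mathcal{J}\to\cO_{X_T}/\mathcal{J}\to Q_i\to 0$ exact after any base change.
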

\begin{proof}
    Let $T$ be a scheme over $S$, with a map $T \to \crimp_S^d (X) \times \crimp_S^d (X)$ classifying two flat families of crimpings over $T$: $f_1: X_T \to Y_1$ and $f_2: X_T \to Y_2$. Consider the following Cartesian diagram, 
$$\begin{tikzcd}
\crimp_S^d(X) \times_{\crimp_S^d(X) \times \crimp_S^d(X)} T \arrow[r, "\Delta'"] \arrow[d] & T \arrow[d, "{(f_1 , f_2) }"] \\
\crimp_S^d(X) \arrow[r, "\Delta"] & \crimp_S^d(X) \times \crimp_S^d(X).
\end{tikzcd}$$ 
We want to show the morphism $\Delta'$ is a closed immersion into $T$.  The fiber product $ \mathcal{X}:= \crimp_S(X) \times_{\crimp_S(X) \times \crimp_S(X)} T$ is defined as a contravariant functor assigning  
$\underline{\mathrm{Isom}} (f_{1,T'}, f_{2,T'})$ to $( T' \to T)$.   Let $Z:= \Cond(f_1) \cup \Cond(f_2) \subset  X_T$, the union of two conductor subspaces of $f_1$ and $f_2$, and let $\mathcal{I}$ be the ideal sheaf defining $Z$. By construction, $f_{i,*} \mathcal{I}$ is contained in $\mathcal{O}_{Y_i}$, and we have a short exact sequence of $O_{Y_i}$-modules supported on $f_i(Z)$:
\[
0 \to \mathcal{O}_{Y_i}/(f_{i,*} \mathcal{I}) \to f_{i,\ast} (\mathcal{O}_{X_T}/\mathcal{I}) \to Q_i \to 0.
\]
Then because $Z$ and both $f_i(Z)$ are finite over $T$, the pushforward of two short exact sequences remain exact on $T$:
\[
0 \to \pi_{i,*}(\mathcal{O}_{Y_i}/(f_{i,*} \mathcal{I})) \to p_{2,\ast} (\mathcal{O}_{X_T}/\mathcal{I}) \to Q_i \to 0
\]
Here $\pi_i: Y_i \to T$ are the structure morphisms, and $p_2 : X \times_S T \to T$ is the second projection. Then $\Delta'(\mathcal{X})$ is defined by the vanishing locus of two homomorphisms of $\mathcal{O}_T$-modules: 
$\pi_{1,*}(\mathcal{O}_{Y_1}/(f_{1,*} \mathcal{I})) \hookrightarrow p_{2,\ast} (\mathcal{O}_{X_T}/\mathcal{I}) \twoheadrightarrow Q_2$, and $\pi_{2,*}(\mathcal{O}_{Y_2}/(f_{2,*} \mathcal{I})) \hookrightarrow p_{2,\ast} (\mathcal{O}_{X_T}/\mathcal{I}) \twoheadrightarrow Q_1$. Since both target $Q_i$ are finite locally free over $T$, the vanishing locus is a closed subscheme of $T$. Hence $\Delta'$ is a closed immersion.

\end{proof}

\begin{proof}[Proof of \Cref{T:main_algebraicity}]
Let $S$ be a quasi-compact, quasi-separated scheme. 
Using absolute Noetherian approximation\cite[\href{https://stacks.math.columbia.edu/tag/07SS}{Tag 07SS}]{stacks-project} we may write $S = \lim_i S_i$, where each $S_i$ is excellent, and by relative noetherian approximation $X \to S$ is the base change of some morphism $X' \to S_i$ of finite presentation. Furthermore, $\crimp^d_S(X) \cong S \times_{S_i} \crimp^d_{S_i}(X')$, so it suffices to prove algebraicity assuming $S$ is excellent. We verify  Artin's criteria for the \'etale stack $\crimp_S^d(X)$, as formulated in \cite[Theorem A] {MR3589351}.


Limit preservation, homogeneity, and effectivity (Condition (2),(3),(4)) are verified respectively 
in \Cref{lem: limit preservation}, \Cref{L: strong_homogeneity}, and \Cref{L: effectivity}. The coherence of deformation theory and obstruction theory (Condition (5),(6)) are verified in \Cref{P: coherence_of_def_obs}. This shows  $\crimp_S^d(X)$ is an algebraic space over $S$, locally of finite presentation. By \Cref{prop: pushout}, $\crimp_S^d(X)$ is quasi-compact. Hence $\crimp_S^d(X)$ is of finite presentation over $S$.  

By \Cref{L: separatedness},  $\crimp_S^d(X)$ is separated over $S$. 
If $X \to S$ is proper, $\cover_{d(d+1), d}$ is proper over $S$. Then admitting a proper surjective morphism from $\cover_{d(d+1), d}$ implies that  $\crimp_S^d(X)$ is proper over $S$. 

If $S$ is a quasi-compact algebraic space, we let $S' \to S$ be a quasi-compact, quasi-separated étale atlas. Then $\crimp_{S'}^d(X_{S'}) \cong  \crimp_S^d(X) \times_S S'$, 
and by previous discussion, $\crimp_{S'}^d(X_{S'})$ is algebraic, of locally finite presentation over $S'$.  Since algebraicity, locally presentation, and properness satisfy étale descent, and $\crimp_S^d(X)$ is separated and quasi-compact,  it follows that $\crimp_S^d(X)$ is represented by a separated  algebraic space of finite presentation over $S$, which is proper if $X \to S$ is proper. This completes the proof.    
\end{proof}

\subsection{The locus of bijective crimpings is closed}
\label{SS: geo_bij_locus}
Fix a base field $k$, and let $X \to S$ be of finite presentation. 
\begin{defn}
We say a crimping  $f: X_K \to Y$ over $K$ is \textit{geometrically bijective} if for any field extension $K \hookrightarrow L$, $f_L: X_L \to Y_L$ is a bijective map (of the underlying topological spaces).    
\end{defn} 
Since a crimping $f$ is finite and the induced map on structure sheaves is injective, it is integral and universally surjective. For a morphism of spaces, geometric injection is equivalent to universal injection. Hence a crimping being geometrically bijective is equivalent to being \emph{universally homeomorphic}. 

\begin{lem} 
\label{L: geo_bijec_closed}
Let $X \to S$ be of finite presentation. Then 
the locus in $\crimp_S^d(X)$ parametrizing geometrically bijective crimpings is closed.     
\end{lem}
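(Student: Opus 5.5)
We show that the complement, the locus of crimpings that are \emph{not} geometrically bijective, is open. Geometric bijectivity of a crimping $f : X_K \to Y$ only involves the conductor. Since $f$ is an isomorphism away from $\Spec_Y(\cO_Y/\Ann(Q))$, $f$ is geometrically bijective if and only if the finite morphism $\Cond(f) \to \Spec_Y(\cO_Y/\Ann(Q))$ is geometrically injective. Over an algebraically closed field this says that each point of $Y$ in the support of $Q$ has exactly one preimage in $X_{\bar K}$.

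The plan is to use the pinching morphism $\cover(X)_{n,d} \to \crimp^d_S(X)$ of \Cref{prop: pushout}, with $n = d(d+1)$, which is surjective on field points by \Cref{P: boundness}. We work on $\cover(X)_{n,d}$ with its universal diagram $W \leftarrow Z \hookrightarrow X$. There both $Z$ and $W$ are finite flat over the base, of ranks $n$ and $n-d$. A field point is geometrically bijective exactly when the finite map $Z_{\bar K} \to W_{\bar K}$ is injective on points. To make this a constructible condition, we use the invariant
\[
N(t) := \#|Z_{\bar t}| - \#|W_{\bar t}|,
\]
the number of geometric points of $Z$ minus that of $W$. This is $\ge 0$ because $Z \to W$ is surjective (since $\cO_W \to \cO_Z$ is injective and finite), and it vanishes exactly at the geometrically bijective points. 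Two facts about finite flat families drive the argument: the number of geometric points of a finite flat family of fixed rank is lower semicontinuous, since points can only collide under specialization, and finite \'etale pieces split off locally.

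We do not work with $N$ directly. Instead we aim to show that the non-bijective locus is stable under generization in $\crimp^d_S(X)$ and is constructible. Constructibility follows from Chevalley's theorem, because $\cover(X)_{n,d}$ is of finite presentation over $S$ and the non-bijectivity condition on it is constructible (a stratification by the numbers of geometric points of $Z$ and $W$). For stability under generization, the equivalent statement is that the bijective locus is stable under specialization. So take a DVR $R$ mapping to $\crimp^d_S(X)$ whose generic point is geometrically bijective, and pass to an extension so that the residue field is algebraically closed. We must show the special fiber is bijective. Take the universal conductor data $Z = \Cond(f)$ and the image $W$ over $R$ as closed subschemes; these need not be flat. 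Suppose two distinct points $z_1, z_2$ of the special fiber of $Z$ map to the same point $w$. Localizing $Y$ at $w$ and using the Nisnevich locality of \Cref{L:locality_of_crimpings}, we may henselize. Then $X$ splits near $z_1$ and $z_2$ into disjoint henselian pieces $X_1 \sqcup X_2$, and $\cO_{Y,w}^h \subset \cO_{X_1} \times \cO_{X_2}$ has finite colength over $R$.

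The main obstacle is deriving a contradiction from this. The generic fiber of $Y$ near $w$ is bijective, so each generic point of $\Spec \cO^h_{Y,w}$ over the generic point of $R$ has a single preimage. The idea is to consider the idempotent $e = (1,0)$ in $\cO_{X_1}\times\cO_{X_2}$. Bijectivity on the generic fiber should show that the image of $e$ in $Q_\eta$ lies in the subring after inverting the uniformizer: on the generic fiber $Y$ is locally a union of unibranch-type pieces, and the components through $X_1$ and $X_2$ are disjoint there, so $e$ lies in $\cO_{Y_\eta}$. Because $Q$ is $R$-flat, hence torsion-free, $\varpi^k e \in \cO_Y$ forces $e \in \cO_Y$. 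The ring $\cO_{Y,w}^h$ would then be a local ring containing a nontrivial idempotent, which is the contradiction. This flatness and torsion-freeness step is where I expect care is needed, and in particular the check that $e$ lies in $\cO_{Y_\eta}$ near $w$. Combining the constructibility and specialization-stability arguments shows that the bijective locus is closed.
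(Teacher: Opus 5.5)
Your argument is correct. The specialization half is the paper's own argument. After henselizing at the collision point, a nontrivial idempotent $e$ of $A=f_*\cO_X\otimes B$ lies in $B_K$ by generic bijectivity. It then lies in $B$ because $Q$ is $R$-torsion-free, which contradicts the locality of $B$.

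The step you flag is easier than your ``unibranch pieces'' heuristic suggests. The map $\Spec A_K\to\Spec B_K$ is finite, surjective and injective (a base change of the universally injective $f_K$), hence a homeomorphism. So every idempotent of $A_K$, being a clopen subset, already comes from $B_K$. Likewise, to henselize you need neither \Cref{L:locality_of_crimpings} nor ``finite colength''. It suffices that base change along the flat map $\Spec\cO^h_{Y,w}\to Y$ preserves injectivity of $\cO_Y\to f_*\cO_X$ and $R$-flatness of the cokernel.

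The genuine difference is constructibility. The paper works with an arbitrary flat family $f\colon X_T\to Y$ over any base $T$. It notes that the non-bijective locus is $\pi(E)$ for the open constructible set $E=X_T\times_Y X_T\setminus\Delta(X_T)$, so Chevalley applies directly on $T$ with no further input. You instead go through the pinching morphism $\cover(X)_{d(d+1),d}\to\crimp^d_S(X)$. There $Z$ and $W$ are finite flat, non-bijectivity is the constructible condition $\#|Z_{\bar t}|>\#|W_{\bar t}|$, and you push this forward by Chevalley.

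Your route is concrete, but it imports \Cref{T:main_algebraicity}, to know that $\crimp^d_S(X)$ is an algebraic space of finite presentation so that the pinching morphism is of finite presentation. It also imports \Cref{P: boundness}, for surjectivity on field points. Hence it only works under the hypotheses of the main theorem. The paper's diagonal argument gives closedness in the base of any family and needs neither algebraicity nor the conductor bound.
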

\begin{proof}
    Let $f: X_T \to Y$ be a flat family of crimpings over $T$. We need to show the subset $F:= \{t \in T \mid f_{\bar{t}}= f \times \Spec \overline{k(t)}: X_t \times \Spec \overline{k(t)} \to Y_t  \times   \Spec \overline{k(t)} \text{ is bijective}\} \subset T$ is closed in $T$. It suffices to show that $F$ is constructible and stable under specialization. 
    Set $E:= X_T \times_Y X_T \setminus \Delta(X_T)$, where $\Delta: X_T \to X_T \times_Y X_T$ denotes the diagonal morphism. We note that $\pi: X_T \times_Y X_T \to T$ is of finite type, and moreover the condition that $f_{\bar{t}}$ is bijective is equivalent to the condition that $E_{\bar{t}}: = E_t \times \Spec \overline{k(t)} = \emptyset$. But $E_{\bar{t}} = \emptyset \iff E_{t} = \emptyset$, given that $ \Spec \overline{k(t)} \to \Spec k(t)$ is surjective. Therefore $T \setminus F = \pi(E)$. Because $f$ is finite and hence separated, $E$ is open and constructible. By Chevalley's theorem, $F$ is constructible. 

    To show that $F$ is stable under specialization, we may assume $T = \Spec R$, where $R$ is a DVR with fraction field $K$, and residue field $\kappa$. Assume $f_K$ is geometrically bijective. Suppose for contradiction that $f_\kappa$ is not geometrically bijective. Passing to an algebraic closure $\bar{\kappa}$ if necessary, we may assume $f_{\kappa}$ is not injective.   Let $y \in Y_{\kappa}$ be a point that has two distinct $x_1 \neq x_2 \in X_{\kappa}$ in the preimage. Replace $\mathcal{O}_{Y,y}$ by its strict henselization $B = \mathcal{O}_{Y,y}^{\mathrm{sh}}$, and let $A$ denote $(f_*\mathcal{O}_X) \otimes_{\mathcal{O}_Y} B$. Since $f_K$ is geometrically bijective, we may assume $\Spec A \to \Spec B$ is bijective over the generic fiber. 
    Because $f$ is finite, $A$ is a finite $B$-algebra, so it can be written as a product of local rings. Since $\Spec A \to \Spec B$ has at least two different points lying over the closed point of $\Spec B$, it follows that $A$ cannot be local and therefore the product is nontrivial. Let $e\in A$ be a nontrivial idempotent. Since $A$ is flat over $R$, $e_K \in A_K$ remains nontrivial. 
    Over the generic fiber, $\Spec A_K \to \Spec B_K$ is bijective, and therefore the nontrivial  idempotent $e_K \in B_K$. Then $e \in B$. Indeed, let $\pi$ denote the uniformizer of $R$. If $e \notin B$, there exists a minimal integer $n \geq 1$ such that $\pi^n e \in B$. But this implies $\pi^{n-1} e$ is a $\pi$-torsion element of $Q= A/B$. This contradicts the $R$-flatness of $Q$. By assumption, $B$ is local, and hence has no nontrivial idempotent. We obtain a contradiction. Therefore, $f_{\kappa}$ is geometrically bijecitve, and $F$ is stable under specialization. Together with the constructibility of $F$, we conclude that $F$ is closed.
    
\end{proof}
On the geometrically bijective locus, a (flat) family of crimpings may be regarded as a (flat) family of subalgebras. 
\begin{lem}
Given a (flat) family of geometrically bijective crimpings $f: X \to Y$ of corank $d$ over $T$. Let $U =\Spec A \subset X$. Then the image $f(U) = \Spec B$ is an affine open in $Y$, given by a  subalgebra $B$  of $A$.      
\end{lem}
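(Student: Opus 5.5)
Since $f$ is geometrically bijective, the discussion preceding \Cref{L: geo_bijec_closed} shows that $f$ is a universal homeomorphism; in particular $|f| : |X| \to |Y|$ is a homeomorphism. So $f(U)$ is an open subset of $|Y|$. Let $V \subset Y$ be the open subspace with $|V| = f(U)$. Then $f^{-1}(V) = U$ as open subspaces of $X$, because $f$ is a bijection on points and $U$ is open. Restricting the crimping to $V$ gives a family of crimpings $U \to V$ over $T$, since the defining conditions in \Cref{D:moduli_functor} are local on $Y$. The plan therefore has two steps: first show that $V$ is affine, and then identify $\Gamma(V,\cO_V)$ with a subalgebra of $A$.

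For affineness I would use the pushout description. Put $W := \Spec_V(\cO_V/\Ann(Q|_V))$ and $Z := \Cond(f|_U)$. By the discussion before \Cref{L:locality_of_crimpings}, $V \cong U \cup_Z W$, and $Z \to W$ is finite and surjective. The main obstacle is that $W$ need not be affine a priori, because $T$ is not assumed affine and $V$ is only an algebraic space.

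I would first reduce to the case $T$ affine, since in the intended application $U$ is affine over $T$. When $T$ is affine, $Z$ is finite over $T$ and hence affine. Then $W$ is affine by Chevalley's theorem for algebraic spaces: $Z \to W$ is a finite surjection from an affine scheme, so $W$ is affine (or simply note that $W$ is finite over affine $T$). The remark following \Cref{L:locality_of_crimpings} then shows that $V$ is affine over $T$, hence affine. More directly, pushouts of affine schemes along a closed immersion and an affine morphism are affine, computed by the fibre product of rings as in \Cref{prop: pushout}, and this yields $V = \Spec B$ with $B = A \times_{\cO(Z)} \cO(W)$. If $T$ is not affine, I would read "affine" in the statement as "affine over $T$", which is what the same argument gives, working locally over $T$.

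Finally, $\cO_V \to f_*\cO_U$ is injective by the crimping condition. Taking global sections on the affine $V$ gives an injection $B \hookrightarrow \Gamma(U,\cO_U) = A$, so $B$ is a subalgebra of $A$ and $\Spec A \to \Spec B$ is the restriction of $f$. In the flat case, $B$ is additionally $T$-flat because $A/B \cong \Gamma(V,Q)$ is $T$-flat.
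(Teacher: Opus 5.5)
Your argument is fine when $T$ is affine, but for general $T$ it does not prove the statement as written. There you weaken the conclusion to ``affine over $T$'' and appeal to a ``reduction to $T$ affine''. No such reduction exists, because absolute affineness of $V$ is not local on $T$. Even the weakened version quietly needs $U\to T$ to be affine, which is an extra hypothesis.

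The obstacle you identify is not real. The conductor $Z=\Cond(f|_U)$ is a closed subscheme of the affine scheme $U=\Spec A$, so it is affine with no assumption on $T$. The Chevalley step you already invoke then shows $W$ is affine, as the target of the finite surjection $Z=U\times_V W\to W$. The pushout of affine schemes along a closed immersion then gives $V\cong\Spec(A\times_{\cO(Z)}\cO(W))$ for arbitrary $T$. With this observation your proof is complete.

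The paper's proof is shorter. It applies the same descent of affineness along finite surjections directly to $f|_U\colon U\to V$, which is finite and surjective with affine source, so no pushout is needed. Once Chevalley's theorem is allowed, your detour through $Z\to W$ is redundant. What your route does buy is that for affine $T$ it avoids Chevalley entirely. In that case $W$ is finite over the affine $T$, hence affine, and affineness of $V$ follows from the explicit fibre-product description of the pinching. Your last step, $B=\Gamma(V,\cO_V)\hookrightarrow\Gamma(U,\cO_U)=A$ from injectivity of $\cO_V\to f_*\cO_U$, matches the paper's reasoning.
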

\begin{proof}
By hypothesis, $f$ is a bijection between the underlying topological spaces. Then, $\mathcal{O}_Y$ may be regarded as a sheaf on $X$, and in particular, $\mathcal{O}_Y \subset \mathcal{O}_X$ is a sheaf of subalgebras. Suppose that $U =\Spec A$ is an affine open of $X$. Since $f$ is a homeomorphism and therefore an open map, it follows that $f(U)$ is open in $Y$. Because $f|_U : U \to V$ is finite and surjective, and affineness descends under finite surjective morphism, $V$ is affine.  
\end{proof}

\section{Semistability for curves of geometric genus $0$}
\label{SS: stability}

\subsection{Universal homeomorphisms and intrinsic moduli theory}

Here we observe a general statement that we will subsequently use to study the moduli of genus $0$ curves.

\begin{prop} \label{P: general_statement}
    Suppose that $\pi: \mathcal{X} \to \mathcal{Y}$ is a representable universal homeomorphism between algebraic stacks locally of finite presentation, quasi-separated, and with affine automorphism groups and separated diagonal, all relative to a noetherian base stack. Then
    \begin{enumerate}
    
    \item \label{P: gs_1} For every $n \geq 1$,  $\Grad^n (\mathcal{X}) \to \Grad^n (\mathcal{Y})$ is a representable universal homeomorphism. 
        \item \label{P: gs_2} Any numerical invariant on $\mathcal{X}$ is pulled back from a unique numerical invariant on $\mathcal{Y}$. One numerical invariant determines a weak $\Theta$-stratification if and only if the other does, and the stratification of $\mathcal{X}$ is induced from the stratification of $\mathcal{Y}$ in this case.
        \item \label{P: gs_3} If the base has characteristic $0$, then $\mathcal{X}$ admits a separated good moduli space if and only if $\mathcal{Y}$ does.
    \end{enumerate}
\end{prop}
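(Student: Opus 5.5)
We prove the three parts in order, since each relies on the previous one, and we use throughout the description of $\Grad^n(\mathcal{X})$ as the mapping stack $\underline{\mathrm{Map}}(B\mathbb{G}_m^n,\mathcal{X})$, together with the analogous description of $\Filt(\mathcal{X})$ via $\Theta^n=[\mathbb{A}^n/\mathbb{G}_m^n]$.

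For part \eqref{P: gs_1}, we first check representability. A $T$-point of $\Grad^n(\mathcal{Y})$ is a map $T\times B\mathbb{G}_m^n\to\mathcal{Y}$. The fiber product $\Grad^n(\mathcal{X})\times_{\Grad^n(\mathcal{Y})}T$ classifies lifts of this map along $\pi$, that is, sections of the representable morphism $\mathcal{X}\times_{\mathcal{Y}}(T\times B\mathbb{G}_m^n)\to T\times B\mathbb{G}_m^n$. Pulling back along $T\to T\times B\mathbb{G}_m^n$, this becomes the functor of $\mathbb{G}_m^n$-fixed points of the algebraic space $\mathcal{X}_T:=\mathcal{X}\times_{\mathcal{Y}}T$, which carries a $\mathbb{G}_m^n$-action. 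The morphism $\mathcal{X}_T\to T$ is a universal homeomorphism and hence separated. Fixed loci of $\mathbb{G}_m^n$ acting on a separated algebraic space are closed subspaces, so $\Grad^n(\mathcal{X})\to\Grad^n(\mathcal{Y})$ is representable, and after this base change it becomes the map $(\mathcal{X}_T)^{\mathbb{G}_m^n}\to T$.

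Next we show this map is a universal homeomorphism. Because $\mathcal{X}_T\to T$ is universally injective, the fixed locus maps injectively; since it is a closed subspace, it is also universally closed. It remains to prove surjectivity after arbitrary base change, and it suffices to treat $T=\Spec k$ with $k$ algebraically closed. In that case $\mathcal{X}_k\to\Spec k$ is a universal homeomorphism, so $(\mathcal{X}_k)_{\mathrm{red}}=\Spec k$ is a single point. The $\mathbb{G}_m^n$-action preserves the reduction, so the unique point is fixed and lies in the fixed locus. We run the same argument for $\Filt$. A $\Theta^n$-lift is again a section over $T\times\Theta^n$, now of a representable universal homeomorphism, and these sections are unique whenever they exist, by the same closedness argument. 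We expect \textbf{the main obstacle} to be existence in the $\Filt$ case. Here we would use the valuative and homogeneity properties of universal homeomorphisms, namely lifting along the open immersion $T\times\mathbb{G}_m^n\subset T\times\mathbb{A}^n$ equivariantly. If lifts fail to exist on the nose, we would instead prove that $\Filt(\mathcal{X})\to\Filt(\mathcal{Y})$ is a finite universally bijective monomorphism-like map, using that $\pi$ is integral and hence satisfies the existence part of the valuative criterion.

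For part \eqref{P: gs_2}, recall that a numerical invariant consists of a rational quadratic norm $b$ on $\Grad^n$ together with a class $\ell\in H^2$, or equivalently a locally constant function $\mu$ on the components of $\Grad^n$ and of $\Filt$. By part \eqref{P: gs_1}, $\pi$ induces homeomorphisms on $|\Grad^n|$ and $|\Filt^n|$ compatible with all the structure maps. Locally constant data on connected components therefore transfer bijectively, which gives the uniqueness statement. The stability function $\mu$ on a point $f:\Theta_k\to\mathcal{X}$ is determined by the induced graded point and its component, so it agrees with $\mu$ of $\pi\circ f$. Consequently HN filtrations correspond, and so do the existence of maximizers and their uniqueness. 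The remaining conditions for a weak $\Theta$-stratification are that the strata $\mathcal{S}_c\subset\Filt$ map by closed immersions onto locally closed substacks, and that the strata are locally finite. Under the homeomorphism, openness and constructibility of the sets $\{M^\mu\le c\}$ transfer across $\pi$. In a weak stratification the strata are only required up to universal homeomorphism, so each $\mathcal{S}_c(\mathcal{X})\to\mathcal{S}_c(\mathcal{Y})\times_{\mathcal{Y}}\mathcal{X}$ is acceptable.

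For part \eqref{P: gs_3}, we use the existence criterion of Alper--Halpern-Leistner--Heinloth in characteristic $0$: a separated good moduli space exists if and only if the stack is $\Theta$-reductive, $S$-complete, and has affine diagonal or the relevant finiteness conditions. Both $\Theta$-reductivity and $S$-completeness are unique-lifting conditions along $\Theta_R$ and $\overline{\mathrm{ST}}_R$, which are extensions from the complement of a codimension-$2$ point. We would transfer these along $\pi$ as in the $\Filt$ argument. In one direction, a lift for $\mathcal{Y}$ lifts to $\mathcal{X}$ because $\pi$ is integral and universally injective and the base is normal. In the other direction, we push forward along $\pi$, using the surjectivity of $\pi$ on the stacks $\underline{\mathrm{Map}}$. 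As an alternative route, given a good moduli space $\mathcal{Y}\to Y$, we would take the relative normalization-type construction $\Spec_Y(\mathcal{O}$-pushforward$)$. Finiteness of $\pi_*\mathcal{O}_{\mathcal{X}}$ together with characteristic $0$ (exactness of invariants) shows that $\mathcal{X}\to\Spec_Y(q_*\pi_*\mathcal{O}_{\mathcal{X}})$ is a good moduli space.
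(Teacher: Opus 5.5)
Your $\Grad^n$ part of (1) is the paper's argument. You identify the fiber over $T$ with the fixed locus of $Z=T\times_{\mathcal{Y}}\mathcal{X}$, and every geometric point is fixed because geometric fibers are single points. Your transfer of numerical invariants via $|\Grad^n|$ is also fine. Two supporting claims are false, though:
\begin{itemize}
\item Universal homeomorphisms of algebraic spaces need not be separated. In characteristic $\neq 2$, $\mathbb{A}^1$ modulo the étale relation $x\sim -x$ for $x\neq 0$ maps to $\mathbb{A}^1$ by $x\mapsto x^2$, and this is a non-separated one. So you need the fixed-locus result for quasi-separated spaces that the paper cites.
\item Sections of a universal homeomorphism are not unique over non-reduced bases. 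For instance $\Spec k[\epsilon,\eta]/(\epsilon,\eta)^2\to\Spec k[\epsilon]/(\epsilon^2)$ has the sections $\eta\mapsto a\epsilon$ for every $a\in k$.
\end{itemize}

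The first real gap is $\Filt$, which you leave as an unresolved obstacle. The paper settles it with \cite[Proposition 1.3.2]{halpernleistner2022structureinstabilitymodulitheory}: for finite $\pi$, $\Filt(\mathcal{X})\to\Filt(\mathcal{Y})\times_{\mathcal{Y}}\mathcal{X}$ is a surjective closed immersion. Hence $\Filt(\pi)$ is a finite universal homeomorphism, and filtrations of $q$ and of $\pi(q)$ correspond.

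Even granting this, you do not prove part (2) in the direction from $\mathcal{X}$ to $\mathcal{Y}$. A weak $\Theta$-stratum is a union of components $\mathcal{S}\subset\Filt$ with $\ev_1$ finite and radicial; a closed immersion is the condition for a $\Theta$-stratum, not a weak one. This must be verified for $\mathcal{S}_c^{\mathcal{Y}}=\Filt(\pi)(\mathcal{S}_c^{\mathcal{X}})$, and the paper does it in three steps:
\begin{itemize}
\item Representability of $\ev_1^{\mathcal{Y}}$ is not automatic. The paper shows its relative inertia is trivial, using $\Aut(g)\cong\Aut(f)\times_{\Aut(p)}\Aut(q)$ and representability of $\ev_1^{\mathcal{X}}$.
\item Finiteness follows because $\Filt(\pi)$ is finite surjective and $\ev_1^{\mathcal{Y}}\circ\Filt(\pi)$ is finite.
\item Universal injectivity comes from unique lifting of geometric points.
\end{itemize}
Your remark that strata are only needed up to universal homeomorphism supplies none of this.

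The most serious gap is in part (3), from $\mathcal{X}$ to $\mathcal{Y}$. To check $\Theta$-reductivity or $S$-completeness of $\mathcal{Y}$, you must first lift a given $(T\setminus 0)/\mathbb{G}_m\to\mathcal{Y}$ to $\mathcal{X}$. ``Surjectivity of $\pi$ on the mapping stacks'' is exactly what needs proof, and it fails for general universal homeomorphisms: Frobenius $\mathbb{A}^1\to\mathbb{A}^1$ in characteristic $p$ has no section.

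The missing idea is where characteristic $0$ is really used, beyond the existence criterion. Since $T\setminus 0$ is regular, it is absolutely weakly normal in characteristic $0$. So the universal homeomorphism $(T\setminus 0)\times_{\mathcal{Y}}\mathcal{X}\to T\setminus 0$ has a unique section, which by uniqueness is $\mathbb{G}_m$-equivariant and descends. Your closure-plus-normality idea could also produce this section. It works only after you note that the generic fiber has a rational point, because purely inseparable residue field extensions are trivial in characteristic $0$.

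You also omit the non-quasi-compact case. The criterion of \cite{Alper_2023} applies to quasi-compact stacks, so the paper covers the good moduli space of $\mathcal{X}$ by quasi-compact opens $X_i$ and applies the criterion to $\mathcal{Y}_i=\pi(\mathcal{X}_i)$. It then glues, after showing that $\mathcal{Y}_{ij}\subset\mathcal{Y}_i$ is saturated via $\Theta$-surjectivity: a filtration lifts to $\mathcal{X}_i$, where its image in $X_i$ is constant.

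Your route from $\mathcal{Y}$ to $\mathcal{X}$ via $\Spec_Y(q_*\pi_*\mathcal{O}_{\mathcal{X}})$ is the paper's argument, and it needs only that $\pi$ is affine.
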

\begin{proof}[Proof of \ref{P: gs_1}:]
    Let us denote $\bG_m^n$ by $G$. 
    Let $T$ be a scheme, and let $T \to \Grad(\mathcal{Y})$ correspond to a map $\lambda: (BG)_T \to \mathcal{Y}$. We show that the fiber product $T \times_{\Grad(\mathcal{Y})} \Grad(\mathcal{X})$ is an algebraic space, and the projection 
    \[
    T \times_{\Grad(\mathcal{Y})} \Grad(\mathcal{X}) \to T
    \]
    is a universal homeomorphism. Pulling back $\lambda$ along the atlas $T \to (BG)_T$, we get a $T$-point $y: T \to \mathcal{Y}$.  Set $Z:= T \times_{\mathcal{Y}} \mathcal{X}$. Under the hypotheses on the morphism $\mathcal{X} \to \mathcal{Y}$ , $Z$ is an algebraic space, and $Z \to T$ is a quasi-separated locally finitely presented universal homeomorphism. Moreover, by descent, $Z$ is naturally equipped with a $G$-action over $T$. By \cite[Proposition 1.4.1]{halpernleistner2022structureinstabilitymodulitheory}, the fixed locus $Z^{G}$ is represented by an algebraic space, and the natural morphism $Z^{G} \hookrightarrow Z$ is a closed immersion. Take any geometric point $z: \Spec k \to Z$, where $k$ is an algebraically closed field. Composing with $Z \to T$ gives a geometric point $t: \Spec k \to T$, and the map $z$ factors through the fiber $Z_t:= Z \times_T \Spec k$. Since the action of $G$ on $Z$ is over $T$, $Z_t$ is $G_k$-invariant. Because $Z \to T$ is a universal homeomorphism, $Z_t$ contains exactly one underlying point, and $(Z_t)_{\rm{red}} \cong \Spec k$. The orbit map $o_z: G_k \to Z_t$ of $z$ under $G_k$-action factors through $(Z_t)_{\rm{red}}$,  because $G_k$ is reduced. So it has to be constant and $z$ is fixed under $G$-action. It follows that every geometric point lies in the fixed locus.  Therefore the closed immersion $Z^{G} \to Z$ is surjective  and hence a universal homeomorphism. 

    We note that $T \times_{\Grad(\mathcal{Y})} \Grad(\mathcal{X})$ is canonically identified with $Z^{G}$. Indeed, a $T'$-point of $T \times_{\Grad(\mathcal{Y})} \Grad(\mathcal{X})$ corresponds to a map $(BG)_{T'} \to X$ lifting  $(BG)_{T'} \to (BG)_T \xrightarrow{\lambda}\mathcal{Y}$, which is equivalent to a section of $(BG)_{T'} \times_{\mathcal{Y}} \mathcal{X} \to (BG)_{T'}$. After pulling back along the atlas $T' \to (BG)_{T'}$, such a  section becomes a $G$-equivariant section of $Z_{T'} \to T'$,  and this is the same as a $T'$-point of $Z^{G}$. Since we have shown that $Z^{G}$ is represented by an algebraic space and  $Z^{G} \to T$ is a universal homeomorphism, this completes the proof. 

    \medskip
    \noindent\textit{Proof of \ref{P: gs_2}:}
    \medskip
    
    Let $\mu^{\mathcal{X}}$ be a numerical invariant on $\mathcal{X}$. 
    Recall that a numerical invariant with values in $\Gamma$ assigns a scaling-invariant function of $\bR^n \setminus \{0\} $ to $\Gamma$ to every nondegenerate point of $|\Grad^n(X)|$. The assignment is locally constant, unchanged under field extensions, and compatible with restrictions along homomorphisms $\bG_m^q \to \bG_m^n$ with finite kernel \cite[Definition 0.0.3]{halpernleistner2022structureinstabilitymodulitheory}. 
    Let $p \in \mathcal{Y}(k)$, and let $\gamma: (\bG_m^n)_k \to \Aut_{\mathcal{X}}(p)$ be any homomorphism of $k$-groups with finite kernel. Together the pair $(p, \gamma)$ defines a $k$-point of $\Grad^n(\mathcal{X})$. Passing to an algebraic closure $\bar{k}/k$, since $\Spec \bar{k} \times_{\Grad^n(\mathcal{Y})} \Grad^n(\mathcal{X}) \to \Spec \bar{k}$ is a universal homeomorphism, $\Spec \bar{k} \times_{\Grad^n(\mathcal{Y})} \Grad^n(\mathcal{X})$ has a unique geometric point. Thus $\Spec \bar{k} \to \Spec \bar{k} \times_{\Grad^n(\mathcal{Y})} \Grad^n(\mathcal{X}) \to \Grad^n(\mathcal{X})$ gives a unique lift of the pair $(p, \gamma)$. Write this lift as $(\tilde{p}, \tilde{\gamma})$, where $\tilde{p} \in \mathcal{X}(\bar{k})$, and $\tilde{\gamma}: (\bG^n_m)_{\bar{k}} \to \Aut_{\mathcal{X}}(\tilde{p})$ a homomorphism of $\bar{k}$-groups, satisfying that $\pi(\tilde{p}) = p$. Since $\pi: \mathcal{X} \to \mathcal{Y}$ is representable, $\Aut_{\mathcal{X}}(\tilde{p}) \to \Aut_{\mathcal{Y}}(p)$ is a monomorphism. Thus $\ker (\tilde{\gamma}) = \ker (\gamma)$ is finite. Define a scaling-invariant function   $\mu^{\mathcal{Y}}_{\tilde{\gamma}}: \mathbb{R}^n \setminus \{0\} \to \Gamma$
    by $\mu^{\mathcal{Y}}_{\tilde{\gamma}} (v) := \mu_{\gamma}^{\mathcal{X}}(v)$, for any $ 0\neq v \in \mathbb{R}^n$.  If $\bar{k}'$ is another algebraic closure of $k$, one may embed both $\bar{k}$ and $\bar{k}'$ into a common algebraically closed extension. Because $\mu^{\mathcal{X}}$ is a numerical invariant and hence the assignment of scaling-invariant functions is unchanged under field extension, the definition of $\mu^{\mathcal{Y}}_{\tilde{\gamma}}$ is independent of the choice of an algebraic closure. 
    It is not difficult to show that the functions  $\mu^{\mathcal{Y}}_{\tilde{\gamma}}$ define a numerical invariant $\mu^{\mathcal{Y}}$ on $\mathcal{Y}$, and we leave details to the reader.

    Note that $\pi$ is integral and locally of finite presentation, and hence it is a finite morphism. By \cite[Proposition 1.3.2]{halpernleistner2022structureinstabilitymodulitheory}, the morphism $\Filt(\mathcal{X}) \to \Filt(\mathcal{Y})$ is finite and representable, and the canonical morphism $\Filt(\mathcal{X}) \to \Filt(\mathcal{Y}) \times_{\mathcal{Y}} \mathcal{X}$ is a surjecitve closed immersion. Since a surjecitve closed immersion is a universal homeomorphism, and the base change $\Filt(\mathcal{Y}) \times_{\mathcal{Y}} \mathcal{X} \to \Filt(\mathcal{Y})$ is a universal homeomorphism as well, it follows that $\Filt(\pi): \Filt(\mathcal{X}) \to \Filt(\mathcal{Y})$ is a finite universal homeomorphism. Moreover, the following square commutes:
    \[
    \begin{tikzcd}
     \Filt(\mathcal{X}) \ar[r, "\Filt(\pi)"] \ar[d, "\mathrm{ev}_1^{\mathcal{X}}"] &  \Filt(\mathcal{Y}) \ar[d, "\mathrm{ev}_1^{\mathcal{Y}}"]  \\
     \mathcal{X} \ar[r,"\pi"] & \mathcal{Y}. 
    \end{tikzcd}
    \]
    
    Hence if $q \in |\mathcal{X}|$ and $p = \pi(q)$, $\Filt(\pi)$ identifies the sets of filtrations of $q$ and $p$. Therefore, for stability functions attached to $\mu^\mathcal{X}$ and $\mu^\mathcal{Y}$, we have $M^{\mu^\mathcal{X}}(q) = M^{\mu^\mathcal{Y}}(p).$ One direction is clear: if $\mu^{\mathcal{Y}}$ defines a weak $\Theta$-stratification on $\mathcal{Y}$, because $\pi$ is finite, \cite[Lemma 2.3.2, Lemma 4.1.19]{halpernleistner2022structureinstabilitymodulitheory} implies the induced weak $\Theta$-stratification on $\mathcal{X}$ is determined by $\mu^\mathcal{X}$.

    Now suppose that $\mu^\mathcal{X}$ defines a weak $\Theta$-stratification on $\mathcal{X}$ corresponding to open substacks $\{\mathcal{X}_{\leq c}\}_{c \in \Gamma}$ and strata $\{\mathcal{S}_c^{\mathcal{X}} \subset \Filt(\mathcal{X}_{\leq c})\}_{c \in \Gamma}$. Define open substacks $\mathcal{Y}_{\leq c} \subset \mathcal{Y}$ whose underlying topological space is given by $|\mathcal{Y}_{\leq c}| = \pi(|\mathcal{X}_{\leq c}|)$, which coincides with the open subset defined numerically as $\{y \in |\mathcal{Y}| \mid M^{\mu^\mathcal{Y}}(y) \leq c\}$. Let $|\mathcal{S}_c^{\mathcal{Y}}| := \Filt(\pi) (|\mathcal{S}_c^{\mathcal{X}}|)$, which is open and closed in $|\Filt(\mathcal{Y}_{\leq c})|$, and thus  a union of connected components of $\Filt(\mathcal{Y}_{\leq c})$. We let $\mathcal{S}_c^{\mathcal{Y}} \subset \Filt(\mathcal{Y}_{\leq c})$ be the corresponding open and closed substack.  We have a commutative diagram in which the square is Cartesian, and the map $\mathcal{S}_c^{\mathcal{X}} \to \mathcal{S}_c^{\mathcal{Y}}
\times_{\mathcal{Y}_{\leq c}}
\mathcal{X}_{\leq c}$ is a surjective closed immersion,
    \[
    \begin{tikzcd}
     \mathcal{S}_c^{\mathcal{X}}
\ar[r]
\ar[rd, bend right=25, "\ev_1^{\mathcal{X}}"]
\ar[rr, bend left=25, "\Filt(\pi)"]
&
\mathcal{S}_c^{\mathcal{Y}}
\times_{\mathcal{Y}_{\leq c}}
\mathcal{X}_{\leq c}
\ar[r] \ar[d]
&
\mathcal{S}_c^{\mathcal{Y}}
\ar[d, "\ev_1^{\mathcal{Y}}"]
\\
     & \mathcal{X}_{\leq c} \ar[r,"\pi"]  & \mathcal{Y}_{\leq c}. 
    \end{tikzcd}
    \]
    It remains to show that every $\mathcal{S}_c^{\mathcal{Y}}$ is a weak $\Theta$-stratum of $\mathcal{Y}_{\leq c}$, equivalently, 
    $\ev_1^{\mathcal{Y}}: \mathcal{S}_c^{\mathcal{Y}} \to \mathcal{Y}_{\leq c}$ is  finite and radicial. 
     To prove that it is representable, it suffices to show the relative inertia $I_{\mathcal{S}_c^{\mathcal{Y}}/ \mathcal{Y}_{\leq c}} \to \mathcal{S}_c^{\mathcal{Y}}$ is trivial. Let $f$ be a geometric point of $\mathcal{S}_c^{\mathcal{Y}}$ with $f(1) = p$, and let $g$ be the unique lift of $f$ in $\mathcal{S}_c^{\mathcal{X}}$ with $g(1) = q \in |\mathcal{X}_{\leq c}|$. By definition, $(I_{\mathcal{S}_c^{\mathcal{Y}}})_{f} = \ker (\Aut_{\mathcal{S}_{c}^{\mathcal{Y}}} (f) \to \Aut_{\mathcal{Y}}(p))$. Because $\mathcal{S}_c^{\mathcal{X}} \to \mathcal{S}_c^{\mathcal{Y}}
\times_{\mathcal{Y}_{\leq c}}
\mathcal{X}_{\leq c}$ is a surjective closed immersion, we have $\Aut_{\mathcal{S}_c^{\mathcal{X}}} (g) \cong \Aut_{\mathcal{S}_c^{\mathcal{Y}}}(f) \times_{\Aut_{\mathcal{Y}}(p)} \Aut_{\mathcal{X}}(q)$. So $\ker (\Aut_{\mathcal{S}_{c}^{\mathcal{Y}}} (f) \to \Aut_{\mathcal{Y}}(p)) \cong \ker (\Aut_{\mathcal{S}_c^{\mathcal{X}}} (g) \to \Aut_{\mathcal{X}} (q))$. Since $\ev_1^{\mathcal{X}}: \mathcal{S}_c^{\mathcal{X}} \to \mathcal{X}_{\leq c}$ is representable, $\ker (\Aut_{\mathcal{S}_c^{\mathcal{X}}} (g) \to \Aut_{\mathcal{X}} (q)) = 1$ and thus $(I_{\mathcal{S}_c^{\mathcal{Y}}})_{f} = 1$ as a trivial group scheme. The relative inertia $I_{\mathcal{S}_c^{\mathcal{Y}}/ \mathcal{Y}_{\leq c}} \to \mathcal{S}_c^{\mathcal{Y}}$ is locally of finite type and has trivial fibers. Therefore it is unramified. Hence the identity section $e: \mathcal{S}_c^{\mathcal{Y}} \to I_{\mathcal{S}_c^{\mathcal{Y}}/ \mathcal{Y}_{\leq c}}$ is an open immersion and surjective on geometric points, and thus an isomorphism. This makes the relative inertia trivial over $B$. 

    Since the composition $\mathcal{S}_c^{\mathcal{X}} \xrightarrow{\Filt(\pi)} \mathcal{S}_c^{\mathcal{Y}} \xrightarrow{\ev_1^{\mathcal{Y}}} \mathcal{Y}_{\leq c}$ is finite, and $\Filt(\pi)$ is a finite universal homeomorphism and thus a finite surjective morphism, it follows that $\ev_1^{\mathcal{Y}}: \mathcal{S}_c^{\mathcal{Y}} \to \mathcal{Y}_{\leq c}$ is finite. By lifting geometric points of $\mathcal{S}_c^{\mathcal{Y}}$
    uniquely along $\Filt(\pi)$ to $\mathcal{S}_c^{\mathcal{X}}$, injectivity of the composition $\mathcal{S}_c^{\mathcal{X}} \to \mathcal{X}_{\leq c} \to \mathcal{Y}_{\leq c}$ implies injectivity of $\mathcal{S}_c^{\mathcal{Y}} \to \mathcal{Y}_{\leq c}$. This argument remains true after any base change $T \to \mathcal{Y}_{\leq c}$ because universal homeomorphisms and radicial morphisms are preserved under base change. Hence $\mathcal{S}_c^{\mathcal{Y}} \to \mathcal{Y}_{\leq c}$ is universal injective, which implies that it is radicial because the map is representable and finite. 

    \medskip
    \noindent\textit{Proof of \ref{P: gs_3}:}
    \medskip
    
    Let the base have characteristic $0$. Since $\pi: \mathcal{X} \to \mathcal{Y}$ is integral and thus affine, \cite[Lemma 4.14]{AIF_2013__63_6_2349_0} implies that if $\mathcal{Y}$ admits a separated good moduli space, so does $\mathcal{X}.$  
    
    Assume that  $\mathcal{X}$ admits a separated good moduli space $q_\mathcal{X}: \mathcal{X} \to X$. Suppose first that $\mathcal{X}$ is quasi-compact. Then 
    \cite[Theorem A]{Alper_2023} implies that $X$ is $\Theta$-reductive and $S$-complete. Let $R$ be any DVR with a uniformizer $u \in R$. Let $T$ denote either $\Spec R[t]$ or $\Spec R[s,t]/ (st -u)$ with the standard $\bG_m$ action \footnote{For $T = \Spec R[t]$, $\bG_m$ acts with weight $-1$ on $t$; for $T = \Spec R[s,t]/ (st -u)$, $\bG_m$ acts with weight $-1$ on $t$, and weight $1$ on $s$.}. Let $0 \in T$ be the unique closed point fixed by $\bG_m$. Since the base is of characteristic $0$, $T \setminus 0$ is regular and therefore absolutely weakly normal \cite[Appendix B]{BSMF_2010__138_2_181_0}. Recall that the absolute weak normalization of an algebraic space $Z$ is denoted by $Z^{\mathrm{wn}}$, and it is initial in the category of separated universal homeomorphisms $Z' \to Z$. Given a morphism $ (T \setminus 0)/ \bG_m \to \mathcal{Y}$, after base change to $T \setminus 0$, the morphism 
    \[
    (T\setminus 0) \times_{\mathcal{Y}} \mathcal{X} \to  T\setminus 0
    \]
    is again a separated universal homeomorphism. Because $(T\setminus 0)^{\mathrm{wn}} = (T\setminus 0)$, this morphism admits a section that descents to give a unique lift of $(T \setminus 0)/ \bG_m \to \mathcal{Y}$ to $\mathcal{X}$. Using this unique lift, it follows that $\mathcal{Y}$ is $\Theta$-reductive and $S$-complete. Therefore, by  \cite[Theorem A]{Alper_2023}, $Y$ admits a separated good moduli space. 

    Now if $\mathcal{X}$ is not quasi-compact, choose a quasi-compact open cover $\cup_i X_i = X$, and set $\mathcal{X}_i := q_\mathcal{X}^{-1}(X_i)$, and $\mathcal{Y}_i := \pi(\mathcal{X}_i)$. Then each $\mathcal{Y}_i$ is a quasi-compact open substack of $\mathcal{Y}$, and $\cup_i \mathcal{Y}_i = \mathcal{Y}$. By the previous case, each $\mathcal{Y}_i$ admits a separated good moduli space $q_i: \mathcal{Y}_i \to Y_i$. Write $\mathcal{X}_{ij} = \mathcal{X}_i \cap \mathcal{X}_j$, and $\mathcal{Y}_{ij} = \mathcal{Y}_i \cap \mathcal{Y}_j$. Then by \cite[Proposition 7.9]{AIF_2013__63_6_2349_0}, these good moduli space maps glue to give a good moduli space map $q_{\mathcal{Y}}: \mathcal{Y} \to Y$ if and only if  $\mathcal{Y}_{ij} \to  \mathcal{Y}_i$ is saturated for each $i, j$. \cite[Lemma 3.32]{Alper_2023} implies that it suffices to show the inclusion $\mathcal{Y}_{ij} \to  \mathcal{Y}_i$ is $\Theta$-surjective, i.e. the natural morphism $\Filt(\mathcal{Y}_{ij}) \to \mathcal{Y}_{ij} \times_{\mathcal{Y}_{i}, \ev_1} \Filt(\mathcal{Y}_i)$ is surjective. Let $f: \Theta_k \to \mathcal{Y}_i$ be a filtration with $f(1) \in \mathcal{Y}_{ij}$. Since $\Filt(\mathcal{X}_i) \to \Filt(\mathcal{Y}_i)$ is a finite universal homeomorphism, there exists a unique filtration $\tilde{f}: \Theta_k \to \mathcal{X}_i$ such that $\pi \circ \tilde{f} = f$. Hence in particular, $\tilde{f}(1) \in \mathcal{X}_{ij}$. Then the map $\Theta \xrightarrow{\tilde{f}} \mathcal{X}_i \to X_i$ factors through the good moduli space map $\Theta \to \Spec k$. Therefore the composition is constant and $\tilde{f}(\Theta) \subset q_\mathcal{X}^{-1}(X_{ij}) = \mathcal{X}_{ij}$.  So $f(\Theta_k) \subset \mathcal{Y}_{ij}$ and it is the image of some filtration of $\mathcal{Y}_{ij}$.  Moreover, because $\mathcal{X}$ is $S$-complete \cite[Proposition 3.48 (2)]{Alper_2023}, it follows that $\mathcal{Y}$ is also $S$-complete. Thus again by \cite[Proposition 3.48 (2)]{Alper_2023}, the good moduli space $Y$ is separated over the base. 
    \end{proof}

\subsection{The stack of geometrically unibranch genus $0$ curves}
Let $\Bbbk$ be a base field of characteristic $0$. Let $k/\Bbbk$ be a field extension. 

\begin{defn}

A crimping $f: \bP^1_k \to C$ is called a geometrically unibranch crimping if the target curve $C$ is geometrically unibranch (in the sense of \cite[\href{https://stacks.math.columbia.edu/tag/0BQ2}{Tag 0BQ2}]{stacks-project}). From the definition, a crimping of $\bP^1_\Bbbk$ is geometrically bijective if and only if it is  geometrically unibranch. 
    We let $\crimp^d_{\Bbbk} (\bP^1_{\Bbbk})^{\rm{un}} \subset \crimp^d_{\Bbbk} (\bP^1_{\Bbbk})$ denote the reduced closed subspace of points that correspond to geometrically bijective crimpings (see \Cref{L: geo_bijec_closed}). \footnote{$\SL_2$ acts on $\crimp^d_{\Bbbk} (\bP^1_{\Bbbk})$ by precomposing the action on $\bP^1_\Bbbk$, and it sends geometrically unibranch crimpings to geometrically unibranch crimpings. Therefore,   $\crimp^d_{\Bbbk} (\bP^1_{\Bbbk})^{\rm{un}}$ is $\SL_2$-invariant.}We define $\cU_{0,d} := \crimp^d_{\Bbbk} (\bP^1_{\Bbbk})^{\rm{un}}/\SL_2$ to be the stack of geometrically unibranch genus $0$ curves.
\end{defn}

Noting that for $\bP^1$, there is a proper surjective morphism $\cover_{2d,d}(\bP^1_\Bbbk) \to \crimp^d_\Bbbk (\bP^1_\Bbbk)$ by \Cref{R: curve_bound}, compatible with $\SL_2$-action, we set $$\cover_{2d,d}^{\rm{un}} (\bP^1_\Bbbk):= \cover_{2d,d}(\bP^1_\Bbbk) \times_{\crimp^d_\Bbbk (\bP^1_\Bbbk)} \crimp^d_{\Bbbk} (\bP^1_{\Bbbk})^{\rm{un}}.$$ 
Then $\cover_{2d,d}^{\rm{un}} (\bP^1_\Bbbk) \hookrightarrow \cover_{2d,d}(\bP^1_\Bbbk)$ is an $\SL_2$-invariant closed subscheme, and $\cover_{2d,d}^{\rm{un}} (\bP^1_{\Bbbk}) \to \crimp^d_{\Bbbk} (\bP^1_{\Bbbk})^{\rm{un}}$ is again proper surjective. Let $
    \rho: \cover_{2d,d}^{\rm{un}} (\bP^1_{\Bbbk}) \to \crimp^d_{\Bbbk} (\bP^1_{\Bbbk})^{\rm{un}}$ denote the $\SL_2$-equivariant proper surjective morphism.

Let $f: \bP^1_k \to C$ be a geometrically unibranch crimping of corank $d$. For any closed point $p \in \bP^1_k$, let $\delta_p (f):= \len_{\cO_{C, f(p)}} (Q_{f(p)})$. Define 
\[
Z_f: = \sum_{p \in |\bP^1_k|} 2\delta_p (f) \cdot [p] \subset \bP^1_k. 
\]
This is a closed subscheme of points of length $2d$, because $\sum_{p \in |\bP^1_k|} 2 \len_{\cO_{C, f(p)}} (Q_{f(p)}) \cdot [k(f(p)): k] = \sum_{p \in |\bP^1_k|} 2 \dim_k Q_{f(p)} = 2d$. Noting that the morphism $f$ is radicial, the extension $k(f(p)) \subset k(p)$ is purely inseparable. Because we are working with fields of characteristic $0$, such extension is trivial, and therefore, $k(f(p)) = k(p)$.  
Let $\mathcal{I}_{Z_f}$ denote its ideal sheaf. Then by the local conductor bound discussed in \Cref{R: curve_bound}, we have $\mathcal{I}_{Z_f} \subset \Ann_C (Q) \subset \mathcal{O}_C \subset  \cO_{\bP^1_k}$. Therefore $\mathcal{O}_{W_f}:=\mathcal{O}_C/ \mathcal{I}_{Z_f}$ is a subalgrbra of $\mathcal{O}_{Z_f}$, and $\mathcal{O}_{Z_f}/\mathcal{O}_{W_f} = Q$ is free of rank $d$. Hence $[W_f \leftarrow Z_f \hookrightarrow \bP^1_k]$ gives a $k$-point of $\cover_{2d,d}(\bP^1_{\Bbbk})$.

Since the image of $[W_f \leftarrow Z_f \hookrightarrow \bP^1_k]$ under the morphism $\cover_{2d,d}(\bP^1_\Bbbk) \to \crimp^d_\Bbbk (\bP^1_\Bbbk)$ recovers the geometrically unibranch crimping $f$,  $[W_f \leftarrow Z_f \hookrightarrow \bP^1_k]$ is in fact a $k$-point of $\cover_{2d,d}^{\rm{un}}(\bP^1_\Bbbk)$. 
We obtain a set-theoretic lift $\psi$:  for every field $k$ over $\Bbbk$, 
\begin{equation}
    \psi_k: \crimp^d_{\Bbbk} (\bP^1_{\Bbbk})^{\rm{un}}(k) \to \cover_{2d,d}^{\rm{un}}(\bP^1_{\Bbbk})(k), \quad (f: \bP^1_k \to C) \mapsto  [W_f \leftarrow Z_f \hookrightarrow \bP^1_k]. 
\end{equation}

Let $L/k$ be a field extension. For the base change crimping $f_L: \bP^1_L \to C_L$, and a closed point $q \in \bP^1_L$ lying over $p \in \bP^1_k$, $(Q_L)_{f_L(q)} \cong Q_{f(p)} \otimes_{\cO_{C, f(p)} } \cO_{C_L, f_L(q)}$. Therefore, $\delta_q (f_L) = \len_{\cO_{C_L, f_L(q)}}((Q_L)_{f_L(q)}) = \len_{\cO_{C, f(p)}} (Q_{f(p)}) = \delta_p(f)$. So $Z_{f_L} = Z_f \times_k \Spec L $, and thus $\psi_L(f_L) = \psi_k(f) \times_{k} \Spec L$. Therefore $\psi$ is a well-defined map of the topological space $|\crimp^d_{\Bbbk} (\bP^1_{\Bbbk})^{\rm{un}}|$ to $|\cover_{2d,d}^{\rm{un}}(\bP^1_{\Bbbk})|$. 
It is clear that $\psi$ is $\SL_2$-equivariant on all field-valued points. 


 \begin{lem}\label{L: lift_DVR}
     Let $R$ be a DVR over $\Bbbk$, with fraction field $K$, and residue field $\kappa$. Let $f: \bP^1_R \to C$ be a flat $R$-family of geometrically unibranch crimpings of corank $d$. The $K$-point $\psi_K(f_K) \in \cover_{2d,d} (\bP^1_{\Bbbk})(K)$ extends uniquely to an $R$-point $\xi = \xi(f)$ of $\cover_{2d,d}^{\rm{un}} (\bP^1_{\Bbbk})$. Then its special fiber is identified with $\psi_\kappa(f_\kappa)  $. Moreover, this construction is $\SL_2$-equivariant: for every $g \in \SL_2(R)$, $\xi (g \cdot f) = g \cdot \xi (f)$. 
 \end{lem}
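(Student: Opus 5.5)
Existence and uniqueness of the extension $\xi$ come from the valuative criterion for the proper morphism $\rho$, but I will not rely on that alone: $\rho$ is proper, not a monomorphism, so I would rather construct $\xi$ by hand and read off its special fiber directly. Uniqueness holds because $\cover_{2d,d}(\bP^1_\Bbbk)$ is proper over $\Bbbk$ (by \Cref{lem: representability_of_H}, $\Hilb^{2d}_{\bP^1}$ is projective), hence separated. By the valuative criterion, an $R$-point is then determined by its generic fiber. Existence is also clear from properness, since every $K$-point extends. The same holds for the closed subspace $\cover_{2d,d}^{\rm{un}}$: the extension lands there because its image in $\crimp^d$ is the $R$-point $f$, by separatedness of $\crimp^d$ (\Cref{L: separatedness}), and $\crimp^{d,\rm{un}}$ is closed (\Cref{L: geo_bijec_closed}).

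The content of the lemma is identifying the special fiber. I would build $\xi$ explicitly. First, let $\mathcal{Z} \subset \bP^1_R$ be the scheme-theoretic closure of $Z_{f_K}$. It is a relative effective Cartier divisor of degree $2d$, flat over $R$ because it is $R$-torsion free. Its special fiber is a divisor $D$ of degree $2d$ on $\bP^1_\kappa$.

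The key claim is $D = Z_{f_\kappa}$, i.e. that $2\delta$ specializes correctly. Work étale locally near a point $y\in C_\kappa$ with unique preimage $p$, by geometric bijectivity. The sum of the $\delta_{p'}(f_K)$ over generic points $p'$ specializing to $p$ equals $\delta_p(f_\kappa)$. This holds because $Q$ is finite flat over $R$ and supported on $\Fitt_0(Q)$. So the local piece of $Q$ over a henselian neighborhood of $y$ is a free $R$-module whose generic rank is $\sum \delta_{p'}$ and whose special rank is $\delta_p$. Since $D$ at $p$ is the specialization of $\sum 2\delta_{p'}[p']$, its multiplicity at $p$ is $2\delta_p(f_\kappa)$. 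To make "specializing to $p$" rigorous I would use the henselization of $\bP^1_R$ at $p$. Over it the divisor decomposes, and the local piece of $Q$ decomposes compatibly, because $f$ is finite and bijective.

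Next, I need to verify $\mathcal{I}_{\mathcal{Z}} \subset \Ann(Q) \subset \cO_C$. Generically this is the conductor bound of \Cref{R: curve_bound}. Since $\cO_{\bP^1_R}/\cO_C = Q$ is $R$-flat, membership in $\cO_C$ can be checked generically: if $\pi^n x \in \cO_C$ then $x \in \cO_C$. The same argument with $Q$ torsion free gives the annihilator condition. Set $\cO_{\mathcal{W}} := \cO_C/\mathcal{I}_{\mathcal{Z}}$. Then $\cO_{\mathcal{Z}}/\cO_{\mathcal{W}} = Q$ is flat of rank $d$, so $\cO_{\mathcal{W}}$ is flat, and this yields an $R$-point $\xi$ of $\cover_{2d,d}$ extending $\psi_K(f_K)$. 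Its special fiber is $[\cO_{C_\kappa}/\mathcal{I}_D \subset \cO_D]$ with $D=Z_{f_\kappa}$, which is exactly $\psi_\kappa(f_\kappa)$.

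Equivariance follows from uniqueness. The point $g\cdot\xi(f)$ extends $g\cdot\psi_K(f_K)=\psi_K(g\cdot f_K)$, using $\SL_2$-equivariance of $\psi$ on field points. By separatedness it must equal $\xi(g\cdot f)$.

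The main obstacle is the specialization of $\delta$-invariants, i.e. the multiplicity computation for $D$; the rest is formal.
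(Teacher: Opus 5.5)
Your construction is the paper's: take the closure $\mathcal Z$ of $Z_{f_K}$, identify $\mathcal Z_\kappa$ with $Z_{f_\kappa}$ using $R$-flatness of $Q$, get $\mathcal I_{\mathcal Z}\subset\cO_C$ from $R$-torsion-freeness of $Q$, and set $\cO_{\mathcal W}=\cO_C/\mathcal I_{\mathcal Z}$. Two differences are worth noting.

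First, you prove uniqueness explicitly from separatedness of $\cover_{2d,d}(\bP^1_\Bbbk)$ and then get $\SL_2$-equivariance from uniqueness. The paper leaves uniqueness implicit and transports $\overline{Z_{f_K}}$ and $\mathcal B/\mathcal I_{Z_R}$ under $g$ by hand. Your route is shorter.

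Second, localizing at the henselization is not cosmetic. The paper works on a Zariski neighborhood $U$ of $x$ and treats $Q|_{\supp(Q)\cap U}$ as a direct summand. That requires each point of $\supp(Q_K)$ to specialize to a single point of $\bP^1_\kappa$, which can fail over a non-henselian $R$. For instance, take $R=\Bbbk[t]_{(t)}$ and the family $\{h : h'\in(x^2-1-t)\}$, which imposes a cusp at the degree-$2$ point $x^2=1+t$. That point specializes to both $x=\pm1$, and both of the paper's local identities fail at $x=1$. After base change to $R^h$, which is compatible with taking closures, with $Q$, and with $\delta$, the finite support splits into local pieces and your count is valid.

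One slip does need fixing. The identity $\sum_{p'}\delta_{p'}(f_K)=\delta_p(f_\kappa)$ is false as written, because $\delta$ is a length while the rank of the free $R^h$-module is a dimension. For example, imposing a cusp at the ramified point $x^2=t$ via $\{h : h'\in(x^2-t)\}$ gives $\delta_{p'}=1$ on the generic fiber but $\delta_0=2$ on the special fiber.

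What flatness actually gives is $\sum_{p'}\delta_{p'}[k(p'):K^h]=\delta_p[k(p):\kappa]$. Here you need $k(f(p))=k(p)$, since $f$ is radicial and we are in characteristic $0$, to convert lengths over $\cO_{C,f(p)}$ into dimensions. The divisor side gives $a_p[k(p):\kappa]=\sum_{p'}2\delta_{p'}[k(p'):K^h]$ for the multiplicity $a_p$ of $D$ at $p$. Comparing the two yields $a_p=2\delta_p(f_\kappa)$, exactly as in the paper. With this correction your argument is complete.
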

 \begin{proof} Suppose that this $R$-family is given by a sheaf of subalgebras $\mathcal{B} \subset \cO_{\bP^1_R}$.
 Let $Z_R \subset \bP^1_R$ be the scheme-theoretic closure of $Z_{f_K}$. Then $Z_R$ is a finite flat closed subscheme of length $2d$, and $(Z_R)_K = Z_{f_K}$. Moreover, we claim that $(Z_R)_{\kappa} = Z_{f_{\kappa}}$. Let $x \in \bP^1_{\kappa}$. We show the equality by comparing the coefficients of $[x]$ on both sides. Write $(Z_R)_{\kappa} = \sum_{x \in |\bP^1_{\kappa}|} a_x [x]$, where $a_x = \len_{\mathcal{O}_{\bP^1_\kappa,x}} \cO_{(Z_R)_\kappa, x}$. Then the contribution to length is preserved under specialization, $$a_x [k(x): \kappa] = \sum_{p \in |\bP^1_K|, p \to x} 2 \delta_p(f_K) [k(p): K],$$ summing over all points $p$ in the generic fiber specializing to $x$. On the other hand, 
 since $f: \bP^1_R \to C$ is a homeomorphism, we may regard the quotient sheaf $Q$ as a sheaf on $\bP^1_R$, though it might not be an $\cO_{\bP^1_R}$-module.   
 We choose an open neighborhood $U \subset \bP^1_R$ of $x$ such that $U_\kappa$ contains no other limit point of the support of $Q_K$. Then the restriction of $Q$ to $\supp(Q) \cap U$ is a direct summand of $Q$, and therefore is again flat over $R$. It follows from $R$-flatness that $$\sum_{p \in |\bP^1_K|, p \to x}\dim_K (Q_K)_{f_K(p)} = \dim_\kappa (Q_\kappa)_{f_\kappa (x)}.$$
 Equivalently, 
 $$\sum_{p \in |\bP^1_K|, p \to x} \delta_p(f_K) [k(f_K(p)): K] = \delta_x(f_\kappa) [k(f_\kappa(x)): \kappa]. $$ Since $f$ is radicial, it follows that $k(f_K(p)) = k(p)$, $k(f_\kappa(x)) = k(x)$.  Therefore, $2 \delta_x(f_\kappa) = a_x$ for every $x \in \bP^1_{\kappa}$. This shows the claim. 

 Now let $\mathcal{I}_{Z_R}$ be the ideal sheaf of $Z_R$. The composition $\mathcal{I}_{Z_R} \to \cO_{\bP^1_R}  \to Q$ vanishes over the generic fiber, and hence its image is $R$-torsion. Since the image is inside a finite flat module $Q$ over $R$, it follows that the map vanishes on $ \bP^1_R$. Therefore, $\mathcal{I}_{Z_R} \subset \mathcal{B}$. We define $W_R$ to be the finite scheme over $R$ with structure sheaf $\mathcal{B} / \mathcal{I}_{Z_R}$. Then $\xi = \xi(f) = [W_R \leftarrow Z_R \hookrightarrow \bP^1_R]$ is an extension of $\psi_K(f_K)$. Moreover, $(W_R)_\kappa$ is given by $(\mathcal{B})_\kappa / (\mathcal{I}_{Z_R})_\kappa = (\mathcal{B})_\kappa/ \mathcal{I}_{Z_{f_\kappa}}$ by the claim. Hence $\xi_\kappa = \psi_\kappa(f_\kappa)$.

Let $g \in \SL_2(R)$. Then the translated family $g \cdot f$ corresponds to the subalgebra 
$g \cdot \mathcal{B} \subset \cO_{\bP^1_R}$. Since $\psi$ is pointwise $\SL_2$-equivariant, $g \cdot Z_{f_K} = Z_{(g\cdot f)_K}$. Hence for the scheme-theoretic closure, $ Z_R (g\cdot f)=  \overline{Z_{(g\cdot f)_K}} = \overline{g \cdot Z_{f_K}} = g \cdot Z_R(f)$. Equivalently, we have $\mathcal{I}_{Z_R (g\cdot f)} = g \cdot \mathcal{I}_{Z_R(f)}$. Therefore $\mathcal{O}_{W_R(g\cdot f)} = (g \cdot \mathcal{B}) / \mathcal{I}_{Z_R (g \cdot f)} = g \cdot (\mathcal{B}/\mathcal{I}_{Z_R(f)}) = g \cdot \cO_{W_R(f)}$. It follows that $\xi(g \cdot f) = g \cdot \xi(f)$.  
 This proves the lemma.
\end{proof}
Since $\crimp^d_\Bbbk(\bP^1_\Bbbk)^{\rm{un}}$ is Noetherian, it has finitely many irreducible components. Let $\zeta_1, \cdots \zeta_r$ be their generic points. We define $X \subset \cover_{2d,d}^{\rm{un}} (\bP^1_\Bbbk)$ to be the reduced closed subscheme whose underlying topological space is the closure of the finite set $\{\psi(\zeta_1), \cdots \psi(\zeta_r)\}$. Write $\rho_X: X \to  \crimp^d_\Bbbk(\bP^1_\Bbbk)^{\rm{un}}$ for the restriction of the $\SL_2$-equivariant proper surjective morphism $
    \rho: \cover_{2d,d}^{\rm{un}} (\bP^1_{\Bbbk}) \to \crimp^d_{\Bbbk} (\bP^1_{\Bbbk})^{\rm{un}}$.  
\begin{lem} \label{L: universal_homeo}
With notation defined above, 
\begin{enumerate}
    \item $|X|= \psi(|\crimp^d_\Bbbk(\bP^1_\Bbbk)^{\rm{un}}|)$;
    \item the restriction $\rho_X$ is an $\SL_2$-equivariant finite universal homeomorphism whose set-theoretic inverse map is given by $\psi$.  
\end{enumerate}   
\end{lem}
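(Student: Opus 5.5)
The plan is to first show that $\psi$ is ``continuous along specializations'' by means of \Cref{L: lift_DVR}, and then deduce both parts from properness of $\rho$.

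\textbf{Part (1).} For the inclusion $\psi(|\crimp^d_\Bbbk(\bP^1_\Bbbk)^{\rm{un}}|) \subset |X|$, take any point $y$. It specializes from some generic point $\zeta_i$. Since $\crimp^d_\Bbbk(\bP^1_\Bbbk)^{\rm{un}}$ is Noetherian, we may choose a DVR $R$ together with a map $\Spec R \to \crimp^d_\Bbbk(\bP^1_\Bbbk)^{\rm{un}}$ sending the generic point to $\zeta_i$ and the closed point to $y$, possibly after extending residue fields. \Cref{L: lift_DVR} then produces an $R$-point $\xi$ of $\cover_{2d,d}^{\rm{un}}$ with generic fiber $\psi(\zeta_i)$ (up to field extension, which is harmless since $\psi$ commutes with base change) and special fiber $\psi(y)$. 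Hence $\psi(y)$ is a specialization of $\psi(\zeta_i)$, so it lies in $|X|$.

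For the reverse inclusion, note that $\rho\circ\psi = \mathrm{id}$ on points, because $\psi_k(f)$ maps to $f$ under the pushout morphism. Thus $\psi(|\crimp^{\rm un}|)$ is exactly the set of points $x$ satisfying $\psi(\rho(x)) = x$. It then suffices to show that every point $x \in |X|$ satisfies this, i.e.\ that $\psi\circ\rho$ is the identity on $|X|$. Let $x$ be a specialization of $\psi(\zeta_i)$ in $\cover$. Choose a DVR $\Spec R \to X$ with generic point mapping to $\psi(\zeta_i)$ and closed point to $x$. Composing with $\rho$ gives a flat family $f$ over $R$ with $f_K=\zeta_i$. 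Then both $\xi(f)$ from \Cref{L: lift_DVR} and the given $R$-point of $X$ extend $\psi_K(f_K)$. Since $\cover_{2d,d}$ is projective, and hence separated, over $\crimp$, these two extensions agree. Therefore $x=\xi(f)_\kappa=\psi(\rho(x))$.

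\textbf{Part (2).} By part (1), $\rho_X$ is bijective with inverse $\psi$ on points. It is surjective because $\rho\circ\psi=\mathrm{id}$, and injective because $\psi\circ\rho_X=\mathrm{id}$. The same argument applies after any field extension, since $\psi$ commutes with base change. Hence $\rho_X$ is universally injective and surjective on geometric points. It is proper, being the restriction of $\rho$ to a closed subscheme. A proper morphism that is universally injective and surjective is a universal homeomorphism, and it is finite, since proper plus quasi-finite gives finite.

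For $\SL_2$-equivariance, it suffices to check that $X$ is $\SL_2$-invariant. Since $\SL_2$ is connected, it permutes the generic points $\zeta_i$ trivially up to the action on each component; more simply, $|X|=\psi(|\crimp^{\rm un}|)$ is $\SL_2$-stable because $\psi$ is pointwise equivariant. Moreover, $X$ is reduced, so $X$ is a $G$-invariant reduced closed subscheme.

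The main obstacle is the identity $\psi\circ\rho=\mathrm{id}$ at non-generic points of $X$. Specifically, one must produce DVRs realizing the relevant specializations, including residue field extensions, and match the two extensions using separatedness of $\rho$. The uniqueness statement in \Cref{L: lift_DVR} is exactly what makes this step work.
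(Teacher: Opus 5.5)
Your proposal is correct and follows essentially the same route as the paper. Both inclusions in (1) go through DVRs and \Cref{L: lift_DVR}, with your explicit appeal to separatedness of $\rho$ supplying the uniqueness the paper uses implicitly, and then bijectivity plus properness of $\rho_X$ gives a finite universal homeomorphism. The only differences are minor. The paper gets universal injectivity from the residue-field identification $k(f)\cong k(\psi(f))$, so $\rho_X$ is radicial; this is the clean way to make your ``same argument after any field extension'' precise. The paper also derives $\SL_2$-stability of $|X|$ from $\SL_2$ preserving the generic points $\zeta_i$, rather than from pointwise equivariance of $\psi$.
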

\begin{proof} 
    Let $x \in |X|$. Then $x$ is a specialization of $\psi(\zeta_i)$ for some $i$. 
    Choose a DVR $R$ and a morphism $\Spec R \to \cover_{2d,d}^{\rm{un}} (\bP^1_{\Bbbk})$
    whose generic point maps to $\psi (\zeta_i)$ and whose special point maps to $x$. By \Cref{L: lift_DVR}, we know that the special point of this family is the set-theoretic lift of its image under $\rho$, i.e., $x = \psi (\rho(x))$. Hence $|X| \subset \psi(|\crimp^d_\Bbbk(\bP^1_\Bbbk)^{\rm{un}}|)$. 

    On the other hand, let $f \in |\crimp^d_\Bbbk(\bP^1_\Bbbk)^{\rm{un}}|$. Then $f$ is a specialization of some generic point $\zeta_i$, and we realize this specialization via some DVR $R$. Since the map $\rho: \cover_{2d,d}^{\rm{un}} (\bP^1_{\Bbbk}) \to \crimp^d_{\Bbbk} (\bP^1_{\Bbbk})^{\rm{un}}$ is proper, the generic lift $\psi(\zeta_i)$ extends uniquely over $R$. The special point, by \Cref{L: lift_DVR}, is exactly $\psi(f)$. Since $|X|$ is closed and the generic point lies in $|X|$, we have $\psi(f) \in |X|$. Therefore $\psi(|\crimp^d_\Bbbk(\bP^1_\Bbbk)^{\rm{un}}|) \subset |X|$. This shows the equality in the first item. 

    It follows that $\rho_X: X \to \crimp^d_\Bbbk(\bP^1_\Bbbk)^{\rm{un}}$ is bijective.  Let $f \in \crimp^d_\Bbbk(\bP^1_\Bbbk)^{\rm{un}}$ be a point, and let $x = \psi(f) \in X$. Then the set-theoretic map defines a morphism $\Spec k(x)  \to X$. Together with $\rho_X$, we get homomorphisms of residue fields $k(f) \to k(x) \to k(f)$ whose composition is the identity on $k(f)$. Hence $k(f) \cong k(x) $.  In particular, $\rho_X$ is radicial and this implies that it is  universally injective by \cite[\href{https://stacks.math.columbia.edu/tag/0482}{Tag 0482}]{stacks-project}. Since $\rho_X$ is quasi-finite and proper, $\rho_X$ is finite and hence integral. Therefore, $\rho_X$ is a finite  universal homeomorphism. 

    Since the action of $\SL_2$ permutes  the irreducible components of $\crimp^d_{\Bbbk} (\bP^1_{\Bbbk})^{\rm{un}}$, it preserves  the set of generic points $\{\zeta_1,\cdots \zeta_r\}$, and hence the set of their image under $\psi$. Therefore, $|X|$ is $\SL_2$-invariant as the topological closure. Since  $X$ is given the reduced closed subscheme structure, the restriction of $\SL_2$-action on $\cover_{2d,d}^{\rm{un}} (\bP^1_{\Bbbk})$ factors through $X$ uniquely. So $X$ is $\SL_2$-invariant. Then the $\SL_2$-equivariance of $\rho_X$ follows from that of $\rho$.

\end{proof}

We consider two $\SL_2$-linearized line bundles on $\cover_{2d,d}^{\rm{un}}(\bP^1_\Bbbk)$: $\mathcal{L}:= \pi^* \mathcal{O}_{\Hilb_{\bP^1_\Bbbk}^{2d}}(1)$, where $\pi: \cover_{2d,d}^{\rm{un}}(\bP^1_\Bbbk) \to \Hilb_{\bP^1_\Bbbk}^{2d}$ denotes the structure map; and the determinant of the universal quotient bundle $\det Q_{\mathrm{univ}}$ (c.f. \Cref{lem: representability_of_H}). Set $\mathcal{M}_{a,b}:= \mathcal{L}^{\otimes a} \otimes (\det Q_{\mathrm{univ}})^{\otimes b}$, $a, b \in \bZ$.  

\begin{lem}\label{L: ampleness_range}  
$\mathcal{M}_{a,b}$ is an $\SL_2$-linearized ample line bundle on $\cover_{2d,d}^{\rm{un}}(\bP^1_\Bbbk)$, for any integers $b>0, a> bd$.      
\end{lem}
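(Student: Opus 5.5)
The plan is to realize $\cover_{2d,d}^{\rm{un}}(\bP^1_\Bbbk)$ inside a product of projective spaces via Plücker-type embeddings, and then recognize $\mathcal{M}_{a,b}$ as a positive combination of an ample class and a class that is relatively ample over the base. By \Cref{lem: representability_of_H}, $\cover_{2d,d}(\bP^1_\Bbbk)$ is a closed subscheme of the relative Grassmannian $G = \Gr_d(\cO_{Z_{\rm univ}}) \to \Hilb^{2d}_{\bP^1_\Bbbk}$. Moreover $\Hilb^{2d}_{\bP^1_\Bbbk} \cong \bP(H^0(\cO(2d))) \cong \bP^{2d}$, and $\cO_{\Hilb}(1)$ is ample there, $\SL_2$-linearized since $\SL_2$ acts linearly on binary forms of degree $2d$. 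It therefore suffices to prove the ampleness on $G$, because ampleness restricts to the closed subschemes $\cover_{2d,d}$ and $\cover_{2d,d}^{\rm un}$. The linearization statements are formal: both $\mathcal{L}$ and $\det Q_{\rm univ}$ carry canonical $\SL_2$-linearizations, from the action on $\Hilb$ and on the universal quotient of $\cO_{Z_{\rm univ}}$ respectively, so every tensor combination does too.

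The first key step is to show that $\det Q_{\rm univ}$ is relatively ample for $G \to \Hilb^{2d}$ and to compute the twist needed to make it globally generated. The Plücker embedding $G \hookrightarrow \bP_{\Hilb}(\wedge^d \pi_*\cO_{Z_{\rm univ}})$ pulls back $\cO(1)$ to $\det Q_{\rm univ}$, so the latter is relatively very ample. Next I identify the vector bundle $E := \pi_*\cO_{Z_{\rm univ}}$ on $\bP^{2d}$. From the sequence $0 \to \cO_{\bP^1}(-2d) \boxtimes \cO_{\bP^{2d}}(-1) \to \cO \to \cO_{Z_{\rm univ}} \to 0$ on $\bP^1 \times \bP^{2d}$, pushing forward gives
\[
E \cong \cO \oplus \cO(-1)^{\oplus(2d-1)},
\]
using $R^1 p_*\cO(-2d) = H^1(\bP^1,\cO(-2d)) \otimes \cO(-1)$, which has rank $2d-1$. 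Consequently $E \otimes \cO(1) = \cO(1) \oplus \cO^{2d-1}$ is globally generated, and so is $\wedge^d (E \otimes \cO(1)) = \wedge^d E \otimes \cO(d)$.

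It follows that the surjection $\wedge^d E \twoheadrightarrow \det Q_{\rm univ}$ on $G$ makes $\det Q_{\rm univ} \otimes \mathcal{L}^{d}$ globally generated, and in fact relatively very ample. Now apply the standard criterion: if $N$ is relatively ample over a projective base, globally generated, and $A$ is ample on the base, then $N \otimes A^{m}$ is ample for every $m>0$. Take $N = (\det Q_{\rm univ} \otimes \mathcal{L}^{d})^{b}$ and note that
\[
\mathcal{M}_{a,b} = N \otimes \mathcal{L}^{a-bd}.
\]
This is ample precisely when $a - bd > 0$ and $b > 0$, which is the range in the statement.

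I expect the main obstacle to be the computation of $E$, specifically pinning down the twist so that the bound comes out exactly as $a > bd$. A careless identification, for instance writing $E$ as a sum involving $\cO(-2)$ or misreading the Hilbert scheme's polarization, would shift the threshold. If the splitting turns out to be less clean, I would fall back on the weaker statement that $E(1)$ is globally generated. That statement follows directly from the surjection $H^0(\cO_{\bP^1}(2d-1)) \otimes \cO \to E(1)$, which comes from evaluating sections of degree-$(2d-1)$ forms on the universal subscheme, and it suffices for the argument.
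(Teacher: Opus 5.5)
Your proof is correct, but after the common first step it takes a different route from the paper. Both arguments compute $E=p_\ast\cO_{Z_{\mathrm{univ}}}$ on $B=\Hilb^{2d}_{\bP^1_\Bbbk}$ from the ideal sequence $0\to\cO(-2d)\boxtimes\cO_B(-1)\to\cO\to\cO_{Z_{\mathrm{univ}}}\to 0$. This gives $0\to\cO_B\to E\to V\otimes\cO_B(-1)\to 0$ with $V=H^1(\bP^1,\cO(-2d))$.

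The paper then uses the algebra structure. Every subalgebra contains $1$, so the embedding of $\cover_{2d,d}^{\mathrm{un}}(\bP^1_\Bbbk)$ into $\Gr_d(E)$ factors through $\Gr_d(E/\cO_B)\cong\Gr_d(V)\times B$. There $\det Q_{\mathrm{univ}}\cong\cO_{\Gr(V)}(1)\boxtimes\cO_B(-d)$, so $\mathcal{M}_{a,b}\cong\cO_{\Gr(V)}(b)\boxtimes\cO_B(a-bd)$ is visibly ample on a product.

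You never use the unit condition. Instead you show that $\det Q_{\mathrm{univ}}\otimes\mathcal{L}^d$ is globally generated, using global generation of $E(1)$ and the surjection $\wedge^d E\twoheadrightarrow\det Q_{\mathrm{univ}}$, and that it is relatively ample. You then twist by a positive power of an ample class from the base. Your ampleness criterion is valid: the map $G\to\bP^M\times B$ defined by $N$ and the projection is proper and quasi-finite, since $N$ is ample and globally generated on each fiber. Hence it is finite, and $\mathcal{M}_{a,b}$ is the pullback of an ample bundle. This proves the stronger statement that $\mathcal{M}_{a,b}$ is ample on the whole relative Grassmannian $\Gr_d(E)$. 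The paper's route is shorter and gives an explicit formula for $\mathcal{M}_{a,b}$ on the smaller ambient space.

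Two small corrections. First, the splitting $E\cong\cO\oplus\cO(-1)^{\oplus(2d-1)}$ does not follow from pushing forward alone. It needs $\Ext^1(\cO(-1),\cO)=H^1(\bP^{2d},\cO(1))=0$. Alternatively, the sequence $0\to\cO(1)\to E(1)\to\cO^{\oplus(2d-1)}\to 0$ together with $H^1(\cO(1))=0$ already gives global generation of $E(1)$, which is all you actually use.

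Second, your proposed fallback is false. Both $H^0(\cO_{\bP^1}(2d-1))\otimes\cO$ and $E(1)$ have rank $2d$, so a surjection between them would be an isomorphism, yet $\det E(1)\cong\cO(1)$ is nontrivial. The evaluation map you describe lands in $p_\ast\bigl(\cO_{\bP^1}(2d-1)|_{Z_{\mathrm{univ}}}\bigr)$, which is a trivial bundle, not in $E(1)$.
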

\begin{proof}
We denote $B := \Hilb_{\bP^1_\Bbbk}^{2d}$. 
Let $Z_{\rm{univ}} \subset \bP^1_\Bbbk \times B$ be the universal divisor of degree $2d$. Set $E = p_* \cO_{Z_{\rm{univ}}}$, where $p$ denotes the projection $\bP^1_\Bbbk \times B \to B$. Then $E$ is a locally free sheaf of $\cO_B$-algebras of rank $2d$ on $B$, and thus we have an inclusion $\cO_B \subset E$. We compute the quotient $\bar{E}:= E/\cO_B $ explicitly. 

Because $Z_{\rm{univ}}$ is given by the universal binary form of degree $2d$, its ideal sheaf is identified with $\cO_{\bP^1_\Bbbk}(-2d) \boxtimes \cO_{B}(-1)$. So we have an exact sequence on $\bP^1_\Bbbk \times B$, 
\[
0 \to \cO_{\bP^1_\Bbbk}(-2d) \boxtimes \cO_{B}(-1) \to \cO_{\bP^1_\Bbbk \times B} \to \cO_{Z_{\rm{univ}}} \to 0. 
\]
Pushing this forward to $B$, because $p_*(\cO_{\bP^1_\Bbbk}(-2d) \boxtimes \cO_{B}(-1)) \cong H^0(\bP^1_\Bbbk, \cO_{\bP^1_\Bbbk}(-2d)) \otimes  \cO_{B}(-1) =0$, and $R^1p_* (\cO_{\bP^1_\Bbbk \times B}) =0$, 
\[
0 \to \cO_{B} \to p_* \cO_{Z_{\rm{univ}}}= E  \to  H^1(\bP^1_\Bbbk, \cO_{\bP^1_\Bbbk}(-2d)) \otimes \cO_{B}(-1) \to 0.  
\]
Hence $\bar{E} \cong H^1(\bP^1_\Bbbk, \cO_{\bP^1_\Bbbk}(-2d)) \otimes \cO_{B}(-1)$, which is a locally free sheaf of rank $2d-1$ on $B$. On the relative Grassmannian $G = \Gr_d(E)$, there is a universal exact sequence $ 0 \to K_{\mathrm{univ}} \to \mathcal{O}_{Z_G} \to Q_{\mathrm{univ}} \to 0$ (c.f. \Cref{lem: representability_of_H}). Then the Grassmannian $\Gr_d(\bar{E})$ is exactly the closed subscheme of $G$ given by  the condition that the composition $\cO_G \to \mathcal{O}_{Z_G} \to Q_{\mathrm{univ}}$ vanishes. Since every subalgebra contains the unit, the closed immersion $\cover_{2d,d}^{\rm{un}}(\bP^1_\Bbbk) \hookrightarrow G$ factors through the closed subscheme $\Gr_d(\bar{E})$. Let $V = H^1(\bP^1_\Bbbk, \cO_{\bP^1_\Bbbk}(-2d))$. Then $\Gr_d(\bar{E}) \cong \Gr_d(V) \times B$. Hence restricting to $\Gr_d(\bar{E})$,  $Q_{\rm{univ}} \cong Q_{\Gr_d(V)} \boxtimes \cO_B(-1)$ under this isomorphism. 
Then $\det Q_{\rm{univ}} = \cO_{\Gr(V)} (1) \boxtimes \cO_B(-d)$. Therefore, $ \mathcal{L}^{\otimes a} \otimes (\det Q_{\mathrm{univ}})^{\otimes b}
\cong \cO_{\Gr(V)}(b) \boxtimes \cO_{B}(a-bd)$, which is ample whenever $b>0, a> bd$. Restricting further to $\cover_{2d,d}^{\rm{un}}(\bP^1_\Bbbk)$, we see that $\mathcal{M}_{a,b}$ is ample if $b>0, a> bd$.   
\end{proof}

Let $\iota: X \hookrightarrow \cover_{2d,d}^{\rm{un}}(\bP^1_\Bbbk)$ denote the $\SL_2$ equivariant closed immersion. We set $\mathcal{M}_{a,b}^X := \iota^* \mathcal{M}_{a,b}$. Since $X$ is a closed subscheme, $\mathcal{M}_{a,b}^X$ is ample whenever $b>0, a>bd$. Since $\mathcal{M}_{a,b}^X$ is $\SL_2$-linearized, it descends to an ample line bundle on $X/\SL_2$ which induces a numerical invariant $\mu_{a,b}^{X/\SL_2}$ on $X/\SL_2$ for $b>0, a>bd$. \Cref{L: universal_homeo} gives a representable universal homeomorphism $X/\SL_2 \to \mathcal{U}_{0,d}$. So by \Cref{P: general_statement}, $\mu_{a,b}^{X/\SL_2}$ is the pullback a unique numerical invariant $\mu_{a,b}$ on $\mathcal{U}_{0,d}$.

Suppose that $A \subset k[x]$ is a subalgebra of $k[x]$ of colength $d$. Let $\mathbb{G}_m$ act on $k[x]$ by $t \cdot x = t^m x$, where $m > 0$. For any nonzero polynomial $f = \sum_{i=0}^n c_i x^i \in k[x]$, we have $t \cdot f(x) = \sum_{i=0}^n c_i t^{mi} x^i$. Then the limit is given by the lowest (resp. highest) degree terms of elements of $A$ as $t \to 0$ (resp. $t \to \infty$). Thus,
\begin{align*}
A_0 &:= \lim_{t \to 0}t \cdot A = \mathrm{span}_k \{ x^l \mid l = \mathrm{ord}_0(f) \text{ for some nonzero} f \in A \}, \\
A_\infty & :=\lim_{t \to \infty} t \cdot A = \mathrm{span}_k \{ x^l \mid l = \deg (f) \text{ for some nonzero} f \in A \}. 
\end{align*}
Define
\[
 G_0(A) := \mathbb{N} \setminus \{ l \in \mathbb{N} ~|~ l = \mathrm{ord}_0(f) \text{ for some nonzero } f \in A\},
\]
and similarly, 
\[
G_\infty (A):= \mathbb{N} \setminus \{ l \in \mathbb{N} ~|~ l = \deg(f) \text{ for some nonzero } f \in A\}. 
\]
Then we have $|G_0(A)| \leq d$, and $|G_\infty(A)| =d$. Equivalently, the limit subalgebra $A_0$ (resp. $A_\infty$) is the monomial subalgebra spanned by the monomials  $x^l$ whose exponent $l \notin G_0$ (resp. $G_\infty$). 

Cover $\bP^1_k$ with two affine charts $U_1= \{ Y \neq 0\}$ and $U_2 := \{ X \neq 0\}$, with affine affine coordinates $x= X/Y$, and $y = Y/X$.
A crimping $(f: \bP^1_k \to C) \in \crimp^d_{\Bbbk} (\bP^1_{\Bbbk})^{\rm{un}} (k)$ determines
a subalgebra $A \subset k[x]$ of colength $|G_\infty (A)|$, and a subalgebra $B \subset k[y]$ of colength $|G_\infty (B)|$, so that on the overlap $U_1 \cap U_2$,  they are compatible under the canonical identification, $k[x, x^{-1}] \xrightarrow{\sim} k[y,y^{-1}], x\mapsto y^{-1}$, and moreover $|G_0 (A)| + |G_\infty (B)| = d$. 

Now consider a nondegenerate filtration $\eta: \Theta_k \to \mathcal{U}_{0,d}$ such that $\ev_1(\eta) =f$. Assume its associated graded point $\ev_0(\eta) = \lambda: B\bG_m \to \mathcal{U}_{0,d}$
is represented by  $f_0 \in \crimp^d_{\Bbbk}(\bP^1_{\Bbbk})^{\rm{un}}(k)$, together with a cocharacter $\gamma: \bG_m \to \SL_2$. Up to conjugation, we may assume  $\gamma(t) = \diag(t^r, t^{-r})$, for some $r>0$. Then $|\lambda| = r$.  In homogeneous coordinates, $\gamma(t) \cdot (X:Y) := (t^r X : t^{-r} Y)$. On the affine chart $U_1$ (resp. $U_2$), this action becomes $\gamma(t) \cdot x = t^{2r} x$ (resp. $\gamma(t) \cdot y = t^{-2r} y$). As $t \to 0$, the limit crimping $f_0 = \lim_{t\to 0} \gamma(t) \cdot f$ is given by two subalgebras $A_0 \subset k[x]$, and $B_{\infty} \subset k[y]$. Hence $Z_{f_0} = 2 |G_0 (A)| [0] + 2|G_\infty (B)|[\infty]$. This gives 
\[
\frac{\wt(\psi(\lambda)^*(\mathcal{L}))}{|\lambda|}= 2 ( |G_\infty (B)| -|G_0 (A)|),
\]
and 
\[
\frac{\wt(\psi(\lambda)^*(\det Q_{\mathrm{univ}}))}{|\lambda|} = 2 (  \sum_{l \in G_0(A)} l - \sum_{l \in G_\infty (B)} l).
\]


Therefore, 
\begin{equation}
  \mu_{a,b} (\eta) = \mu_{a,b}^{X/\SL_2}(\lambda) =   2a ( |G_\infty (B)| -|G_0 (A)|) + 2b (\sum_{l \in G_0(A)} l - \sum_{l \in G_\infty (B)} l).
\end{equation}

\begin{defn}
   A point of $|\mathcal{U}_{0,d}|$, represented by a crimping $(f: \bP^1_k \to C)$ over a field $k/\Bbbk$ is called \emph{$\Theta$-semistable} with respect to $\mu_{a,b}$, if for every field extension $L/k$, and every nondegenerate filtration $\eta: \Theta_L \to \mathcal{U}_{0,d}$ with $\ev_1(\eta) \cong f_L$, one has  $\mu_{a,b} (\eta)  \geq 0$.  
   In this case, we also say the corresponding geometrically  unibranch curve $C$ is  $\Theta$-semistable.
\end{defn}
\begin{thm}\label{T:genus_0}
        Let $b> 0, a> bd$. Then the numerical invariant $\mu_{a,b}$ defines a $\Theta$-stratification on $\mathcal{U}_{0,d}$. Moreover, the $\Theta$-semistable locus $\mathcal{U}_{0,d}^{\mathrm{ss}}$ admits a good moduli space that is proper over $\Bbbk$. 
    \end{thm}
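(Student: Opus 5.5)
The plan is to transfer everything from the GIT-type quotient stack $X/\SL_2$ to $\mathcal{U}_{0,d}$ using \Cref{P: general_statement}, and to carry out the actual work on $X/\SL_2$. By \Cref{L: universal_homeo}, $\rho_X$ induces a representable finite universal homeomorphism $X/\SL_2 \to \mathcal{U}_{0,d}$. Both stacks are of finite type over $\Bbbk$, with affine stabilizers (closed subgroups of $\SL_2$) and separated, affine diagonal, because they are quotients of separated algebraic spaces by $\SL_2$; here we use \Cref{T:main_algebraicity} for $\crimp^d_\Bbbk(\bP^1_\Bbbk)$. So the hypotheses of \Cref{P: general_statement} are satisfied. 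Since $\mu_{a,b}$ is by construction the unique numerical invariant pulling back to $\mu^{X/\SL_2}_{a,b}$, parts \ref{P: gs_2} and \ref{P: gs_3} reduce the theorem to three statements on $X/\SL_2$: first, $\mu^{X/\SL_2}_{a,b}$ defines a $\Theta$-stratification; second, the semistable locus of $X/\SL_2$ is the preimage of $\mathcal{U}_{0,d}^{\mathrm{ss}}$; third, that semistable locus has a proper good moduli space.

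For the second statement, $\Filt(\pi)$ identifies the filtrations of $q$ with those of $\pi(q)$, and the invariants agree, as shown in the proof of \ref{P: gs_2}. Hence semistability is preserved in both directions. This also justifies the explicit formula for $\mu_{a,b}(\eta)$ computed just before the theorem.

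For the first and third statements, I would note that $X$ is a projective scheme. It is a closed subscheme of $\cover_{2d,d}(\bP^1_\Bbbk)$, which is projective over $\Hilb^{2d}_{\bP^1}\cong \bP^{2d}$ by \Cref{lem: representability_of_H}. By \Cref{L: ampleness_range}, $\mathcal{M}^X_{a,b}$ is an $\SL_2$-linearized ample line bundle for $b>0$ and $a>bd$. The numerical invariant $\mu^{X/\SL_2}_{a,b}$ is the standard one attached to this ample bundle, using the norm on cocharacters given by $|\lambda| = r$, which comes from a Weyl-invariant inner product. Its sign convention must make $\mu\ge 0$ on all filtrations equivalent to the Hilbert–Mumford criterion. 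Then classical GIT applies. The Kempf–Ness/Hesselink stratification of the projective $\SL_2$-scheme $X$ with respect to $\mathcal{M}^X_{a,b}$ is a $\Theta$-stratification of $X/\SL_2$ whose strata are indexed by $\mu$, as established in the cited work of Halpern-Leistner. The semistable locus $X^{ss}/\SL_2$ has good moduli space $X/\!\!/\SL_2=\mathrm{Proj}\bigoplus_n H^0(X,\mathcal{M}^{\otimes n}_{a,b})^{\SL_2}$, which is projective because we are in characteristic $0$ and $\SL_2$ is reductive.

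Transporting back along \Cref{P: general_statement}\ref{P: gs_2} gives a weak $\Theta$-stratification of $\mathcal{U}_{0,d}$. Part \ref{P: gs_3} gives a separated good moduli space $U$ for $\mathcal{U}^{\mathrm{ss}}_{0,d}$. This $U$ receives a map from $X/\!\!/\SL_2$ that is a universal homeomorphism: it is surjective, and it is universally closed since good moduli maps are universally closed. Therefore $U$ is proper, because $X/\!\!/\SL_2$ is proper and $U$ is separated and of finite type.

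The main obstacle is upgrading the weak $\Theta$-stratification to a genuine $\Theta$-stratification, since \Cref{P: general_statement} only transfers the weak notion. In characteristic $0$ I would argue as follows. The map $\ev_1\colon \mathcal{S}^{\mathcal{Y}}_c\to \mathcal{Y}_{\le c}$ is finite, radicial and a universal homeomorphism onto a closed substack. Because $\Bbbk$ has characteristic $0$, one expects it to be a closed immersion after checking unramifiedness. Unramifiedness should follow from the fact that the tangent complex of $\Filt$ at a point has no positive-weight obstruction to injectivity, as in Halpern-Leistner's result that weak $\Theta$-stratifications on stacks in characteristic $0$ with such invariants are $\Theta$-stratifications. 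If that route stalls, I would instead verify directly that the stratification on $X/\SL_2$ is by closed immersions and use the explicit formula for $\mu_{a,b}(\eta)$ in terms of $G_0(A)$ and $G_\infty(B)$ to check the HN-boundedness conditions.
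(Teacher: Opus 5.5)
Your proposal is correct and shares the paper's architecture: GIT on the projective $\SL_2$-scheme $X$ with the ample linearization $\mathcal{M}^X_{a,b}$, then transport along the finite representable universal homeomorphism $X/\SL_2\to\mathcal{U}_{0,d}$ via \Cref{P: general_statement}. The step you flag as the main obstacle is settled in the paper by a single citation: over a base of characteristic $0$, every weak $\Theta$-stratification is a $\Theta$-stratification \cite[Corollary 2.1.9]{halpernleistner2022structureinstabilitymodulitheory}. So your tangent-complex heuristic is not needed. Your fallback would not by itself settle this step either, because knowing that the strata of $X/\SL_2$ are closed substacks does not force $\ev_1$ to be a closed immersion on $\mathcal{U}_{0,d}$. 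The genuine difference is the properness step. The paper transfers the existence part of the valuative criterion from $(X/\SL_2)^{\mathrm{ss}}$ to $(\mathcal{U}_{0,d})^{\mathrm{ss}}$ and applies \cite[Theorem A]{Alper_2023}. You instead use the universal property of good moduli spaces to get $X/\!\!/\SL_2\to U$, which is surjective because $(X/\SL_2)^{\mathrm{ss}}\to(\mathcal{U}_{0,d})^{\mathrm{ss}}\to U$ is. You then conclude that $U$ is proper, being separated, of finite type, and the target of a surjection from a projective scheme. This route is more elementary and avoids valuative criteria for stacks. You should cite that a good moduli space of a finite-type stack over a field is of finite type. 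Also note that only surjectivity is used: your claim that $X/\!\!/\SL_2\to U$ is a universal homeomorphism is not established by the universal closedness and surjectivity you give.
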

    \begin{proof}
    When $b> 0, a> bd$, $\mathcal{M}_{a,b}^{X}$ is ample. Thus $\mu_{a,b}^{X/\SL_2}$ defines a $\Theta$-stratification on $X/\SL_2$ and $\Theta$-semistable locus denoted by $(X/\SL_2)^{\mathrm{ss}}$ admits a proper good moduli space \cite[Theorem 18.2.0.2]{HLModuliTheory}. Note that $\mu_{a,b}$ pulls back to $\mu_{a,b}^{X/\SL_2}$ via the finite representable universal homeomorphism $X/\SL_2 \to \mathcal{U}_{0,d}$, and $(X/\SL_2)^{\mathrm{ss}} \to (\mathcal{U}_{0,d})^{\mathrm{ss}}$ is again a finite representable universal homeomorphism. Hence it follows from \Cref{P: general_statement} that $\mu_{a,b}$ defines a $\Theta$-stratification \footnote{Here we define everything over characteristic $0$, any weak $\Theta$-stratification is a $\Theta$-stratification \cite[Corollary 2.1.9]{halpernleistner2022structureinstabilitymodulitheory} } on $\mathcal{U}_{0,d}$, and $(\mathcal{U}_{0,d})^{\mathrm{ss}}$ admits a separated good moduli space. 

    Moreover, because $(X/\SL_2)^{\mathrm{ss}}$ satisfies the existence part of the valuative criterion for properness, and  $(X/\SL_2)^{\mathrm{ss}} \to (\mathcal{U}_{0,d})^{\mathrm{ss}}$ is a finite representable universal homeomorphism, it follows  that $(\mathcal{U}_{0,d})^{\mathrm{ss}}$ also satisfies the existence part of the valuative criterion for properness. Therefore $(\mathcal{U}_{0,d})^{\mathrm{ss}}$ admits a proper good moduli space \cite[Theorem A]{Alper_2023}. This completes the proof.
    \end{proof}

\printendnotes
\printbibliography

\end{document}